\documentclass[11pt]{article}
\usepackage{amsmath}
\usepackage{amsfonts}
\usepackage{amssymb}
\usepackage{mathrsfs}
\usepackage{amsthm}
\usepackage{palatino, color}
\usepackage[margin=2.5cm, vmargin={1.5cm},includefoot]{geometry}

\definecolor{blue}{rgb}{0,0,1}
\definecolor{red}{rgb}{1,0,0}
\definecolor{green}{rgb}{0,1,0}

\title{Kernels for products of $L$-functions}

\author{Nikolaos Diamantis and Cormac O'Sullivan}
\date{Nov 2, 2011}

\begin{document}

\maketitle

\bibliographystyle{plain}

\def\s#1#2{\langle \,#1 , #2 \,\rangle}

\def\H{{\mathbb H}}
\def\F{{\frak F}}
\def\C{{\mathbb C}}
\def\R{{\mathbb R}}
\def\Z{{\mathbb Z}}
\def\Q{{\mathbb Q}}
\def\N{{\mathbb N}}
\def\B{{\mathbb B}}
\def\G{{\Gamma}}
\def\GH{{\G \backslash \H}}
\def\g{{\gamma}}
\def\L{{\Lambda}}
\def\ee{{\varepsilon}}
\def\K{{\mathcal K}}
\def\Re{\mathrm{Re}}
\def\Im{\mathrm{Im}}
\def\PSL{\mathrm{PSL}}
\def\SL{\mathrm{SL}}
\def\Vol{\operatorname{Vol}}
\def\li{\operatorname{Li}}
\def\sgn{\operatorname{sgn}}
\def\coh{{\mathcal C}}
\def\doh{{\mathcal D}}
\def\lqs{\leqslant}
\def\gqs{\geqslant}

\def\ca{{\frak a}}
\def\cb{{\frak b}}
\def\cc{{\frak c}}
\def\cd{{\frak d}}
\def\ci{{\infty}}

\def\sa{{\sigma_\frak a}}
\def\sb{{\sigma_\frak b}}
\def\sd{{\sigma_\frak d}}
\def\si{{\sigma_\infty}}

\def\ei{{\text{\boldmath $E$}}}
\def\po{{\text{\boldmath $P$}}}
\def\ke{{\mathcal K}}
\def\de{{\mathcal E}}


\newtheorem{theorem}{Theorem}[section]
\newtheorem{lemma}[theorem]{Lemma}
\newtheorem{prop}[theorem]{Proposition}
\newtheorem{cor}[theorem]{Corollary}
\newtheorem{defin}[theorem]{Definition}
\newtheorem{eg}[theorem]{Example}

\renewcommand{\labelenumi}{(\roman{enumi})}

\numberwithin{equation}{section}

\begin{abstract}\noindent
The Rankin-Cohen bracket of two Eisenstein series provides a kernel yielding products of the periods of Hecke eigenforms at critical values. Extending this idea leads to a new type of Eisenstein series built with a double sum. We develop the properties of these series and their non-holomorphic analogs and show their connection to values of $L$-functions outside the critical strip.
\end{abstract}

\section{Introduction}
In 1952, Rankin \cite{R} introduced  the fruitful idea of expressing  the product of two
critical values of the $L$-function of a weight $k$ Hecke eigenform
$f$ for $\G=\SL(2,\Z)$ in terms of
the Petersson scalar product of $f$ and a product
of Eisenstein series:
\begin{equation}\label{rank}
    \s{E_{k_1} E_{k_2}}{f} = (-1)^{k_1/2} 2^{3-k} \frac{k_1 k_2}{B_{k_1} B_{k_2}} L^*(f,1) L^*(f,k_2)
\end{equation}
for $k=k_1+k_2$, the Bernoulli numbers $B_j$ and the completed, entire $L$-function of $f$,
\begin{equation*}
L^*(f, s):=\frac{\G(s)}{(2 \pi)^s} \sum_{m=1}^{\infty}
\frac{a_f(m)} {m^s} = \int_0^\infty f(iy) y^{s-1} \, dy.
\end{equation*}
Zagier  \cite[p. 149]{Za3} extended \eqref{rank} to get
\begin{equation}\label{zag}
    \s{[E_{k_1}, E_{k_2}]_n}{f} = (-1)^{k_1/2}(2\pi i)^n 2^{3-k} \binom{k-2}{n}\frac{k_1 k_2}{B_{k_1} B_{k_2}} L^*(f,n+1) L^*(f,n+k_2)
\end{equation}
where $k=k_1+k_2+2n$ and $[g_1,g_2]_n$ stands for the {\em Rankin-Cohen bracket of index $n$} given by
\begin{equation} \label{rco}
[g_1,g_2]_n := \sum_{r=0}^n (-1)^r \binom{k_1+n-1}{n-r}
\binom{k_2+n-1}{r}g_1^{(r)}g_2^{(n-r)}.
\end{equation}
The {\em periods}  of  $f$ in the critical strip are the numbers
\begin{equation}\label{periods}
L^*(f,1), L^*(f,2), \ldots, L^*(f,k-1).
\end{equation}
 Zagier in \cite[\S 5]{Za3} and Kohnen-Zagier in \cite{KZ} proved important results of
the Eichler-Shimura-Manin theory on the algebraicity of these critical values using
\eqref{zag}. We describe this in more depth in \S\S \ref{valsl}, \ref{per1}.

On the face of it,  the
techniques of \cite{Za3}, employing \eqref{zag}, apply only to
critical values; an extension to non-critical values, $L^*(f,j)$ for integers $j \lqs 0$ or $j \gqs k$, would seem to require Rankin-Cohen
brackets of negative index $n$ or holomorphic Eisenstein series of negative weight, neither of which are defined. Analyzing  the
structure of the Rankin-Cohen bracket of two Eisenstein series in \S \ref{rcbr} reveals a natural construction which we call a {\em double Eisenstein series}\footnote{In the context of multiple zeta functions, the authors in \cite{GKZ} give a different definition of `double Eisenstein series'. See also \cite{Den}, for example, for distinct `double Eisenstein-Kronecker series'. }:
\begin{equation}\label{dbleis1}
    \sum_{\g, \, \delta \in \G_\infty \backslash \G \atop \g\delta^{-1} \neq
\G_\infty}
\left(c_{\g \delta^{-1}}\right)^{l} j(\g,z)^{-k_1} j(\delta,z)^{-k_2}
\end{equation}
where, for $\g \in \G$, we write
 $$
 \g =\left( \smallmatrix a_\g & b_\g \\ c_\g & d_\g \endsmallmatrix \right), \quad j(\g,z):=c_\g z + d_\g.
 $$
 By comparison, the usual holomorphic Eisenstein series  is
\begin{equation}\label{ekzs}
E_k(z):= \sum_{\g \in \G_\infty \backslash \G} j(\g,z)^{-k}.
\end{equation}
The double Eisenstein series \eqref{dbleis1} converges to a weight $k_1+k_2$ cuspform when $l<k_1-2, k_2-2$. For negative integers $l$ it behaves as a Rankin-Cohen
bracket of negative index, see Proposition \ref{th2}. This allows us to further generalize \eqref{rank}, \eqref{zag} and in \S \ref{per} we  characterize the field containing an arbitrary value of an $L$-function in terms of double Eisenstein series and their Fourier coefficients. In the interesting paper \cite{cmz}, Rankin-Cohen
brackets are linked to operations on automorphic pseudodifferential operators and may also be reinterpreted  in this framework allowing for more general indices.

An extension of Zagier's kernel formula \eqref{zag} in the
non-holomorphic direction is given in \S \ref{az}. There we show
 that the holomorphic double Eisenstein series have  non-holomorphic counterparts:
\begin{equation}\label{dgk}
\sum_{\g, \, \delta \in \G_\infty \backslash \G \atop \g\delta^{-1} \neq
\G_\infty} |c_{\g \delta^{-1}}|^{-s-s'}\Im(\g z)^{s} \Im(\delta z)^{s'}.
\end{equation}
These weight $0$ functions possess analytic continuations and functional equations resembling those for the classical non-holomorphic Eisenstein series. As kernels, they produce
 products of $L$-functions for {\em Maass cusp forms}, see Theorem \ref{cpl}. The main
motivation for this construction was its potential use in the rapidly
developing study of periods of Maass  forms \cite{BLZ, LZ, Ma2, Mue}. In developing the properties of \eqref{dgk} we require a certain kernel $\ke(z;s,s')$ as defined in \eqref{nhker}. It is interesting to note that Diaconu and Goldfeld \cite{dg} needed exactly the same series for their results on second moments of $L^*(f,s)$, see \S \ref{pww}.

\vskip 3mm
{\bf Acknowledgements.} We thank Yuri Manin for his stimulating comments on an earlier version of this paper, and the referee who provided the reference \cite{dg}.

\section{Statement of main results}

\subsection{Preliminaries} \label{prel}
Our notation is as in \cite{DO2010}. In all sections but two, $\G$ is the modular group $\SL(2,\Z)$ acting on the upper half plane $\H$. The definitions we give for double Eisenstein series extend easily to more general groups, so in \S \ref{holeis} we prove their basic properties for $\G$ an arbitrary Fuchsian group of the first kind and in \S \ref{slast} we see how some of our main results are valid in this general context.

  Let $S_{k}(\Gamma)$ be the $\C$-vector space of holomorphic, weight $k$ cusp forms for  $\G$ and $M_{k}(\Gamma)$  the  space of modular forms.
These spaces are acted on by the Hecke operators $T_m$, see \eqref{ho}. Let $\mathcal B_k$ be the unique basis of $S_{k}$ consisting of Hecke eigenforms, normalized to have first Fourier coefficient $1$. We assume throughout this paper that $f \in \mathcal B_k$.  Since $\s{T_m f}{f} = \s{f}{T_m f}$ it follows that all the Fourier coefficients of $f$ are real and hence
$
 \overline{L^*(f,s)} = L^*(f,\overline{s})$. Also, recall the functional equation
\begin{equation}\label{lfe}
    L^*(f,k-s)=(-1)^{k/2}L^*(f,s).
\end{equation}

We summarize some standard properties of the non-holomorphic Eisenstein series, see for example \cite[Chapters 3, 6]{Iwsp}. Throughout this paper we use the variables
$
z=x+iy \in \H, \quad s=\sigma + it \in \C.
$
\begin{defin} For $z \in \H$ and $s \in \C$ with $\Re(s) >1$, the weight zero, {\em non-holomorphic Eisenstein series} is
\begin{equation}\label{eis}
    E(z,s):=\sum_{\g \in \G_\infty \backslash \G} \Im(\g z)^s = \frac{y^s}{2}\sum_{c,d \in \Z \atop{(c,d)=1}} |cz+d|^{-2s}.
\end{equation}
\end{defin}
Let $\theta(s):=\pi^{-s}\G(s)\zeta(2s)$.
Then $E(z,s)$ has a Fourier expansion \cite[Theorem 3.4]{Iwsp} which we may write in the form
\begin{equation}\label{fec}
E(z,s) = y^s+\frac{\theta(1-s)}{\theta(s)} y^{1-s}+\sum_{m \neq 0} \phi(m,s) |m|^{-1/2} W_s(mz)
\end{equation}
where $W_s(mz)=2(|m|y)^{1/2}K_{s-1/2}(2 \pi |m|y)e^{2 \pi i m x}$
is the Whittaker function for $z \in \H$ and also
$\theta(s) \phi(m,s) = \sigma_{2s-1}(|m|)|m|^{1/2-s}$. As usual, $\sigma_s(m):=\sum_{d|m} d^s$ is
the divisor function.

For the weight $k \in 2\Z$, non-holomorphic Eisenstein series,
generalizing \eqref{eis}, set
\begin{equation}\label{defe}
 E_k(z,s):=  \sum_{\g \in \G_\ci \backslash \G} \Im (\g z)^s \left(\frac{j(\g, z)}{|j(\g, z)|}\right)^{-k}.
\end{equation}
Then (\ref{defe}) converges to an analytic function of $s\in \C$, and a smooth function of $z\in \H$,  for $\Re(s)>1$. Also $y^{-k/2}E_k(z,s)$ has weight $k$ in $z$. Define the {\em completed
non-holomorphic Eisenstein series} as
\begin{equation}\label{compe}
E^*_k(z,s) := \theta_k(s) E_k(z,s) \quad \text{ for } \quad \theta_k(s) :=  \pi^{-s} \G(s+|k|/2) \zeta(2s).
\end{equation}
With \eqref{fec}, we see that $E(z,s)$ has a meromorphic continuation to all $s \in \C$. The same is true of $E_k(z,s)$, see \cite[\S 2.1]{DO2010} for example.
We have the functional equations
\begin{eqnarray}\label{the}
   \theta(s/2) &=& \theta((1-s)/2),\\
   E^*_k(z,s) &=& E^*_k(z,1-s).\label{eisf}
\end{eqnarray}

\subsection{Holomorphic double Eisenstein series}
Define the subgroup
\begin{equation}\label{gi}
B:=\left\{\left.\left( \smallmatrix 1 & n \\ 0 & 1 \endsmallmatrix \right) \right| n\in\Z \right\} \subset \SL(2,\Z).
\end{equation}
Then $\G_\ci$,   the subgroup of $\G=\SL(2,\Z)$ fixing $\ci$, is $B \cup -B$. For $\g \in \G_\infty \backslash \G$ the quantities $c_\g$, $d_\g$ and $j(\g,z)$ are only defined up to sign (though even powers are well defined). For $\g \in B \backslash \G$ there is no ambiguity in the signs of  $c_\g$, $d_\g$ and $j(\g,z)$.
\begin{defin} Let $z \in \H$ and $w \in \C$. For integers $k_1$, $k_2 \geqslant 3$   we define the {\em double Eisenstein series}
\begin{equation}\label{dbleis1b}
    \ei_{k_1, k_2}(z,w):=
\sum_{\g, \, \delta \in B \backslash \G \atop  c_{\g  \delta^{-1}}  >0}
\left( c_{\g  \delta^{-1}} \right)^{w-1} j(\g,z)^{-k_1} j(\delta,z)^{-k_2}.
\end{equation}
\end{defin}
 This series is well-defined and, as we see in Proposition \ref{cgs}, for $\Re(w) < k_1-1, k_2-1$ converges to a holomorphic function of $z$ that is a weight $k=k_1+k_2$ cusp form. It vanishes identically when $k_1$, $k_2$ have different parity.

Let $k$ be even. To get the most general kernel, with $s\in \C$, set
\begin{equation}\label{dbleis3}
\ei_{s, k-s}(z,w):=
\sum_{ \g, \, \delta \in B \backslash \G \atop  c_{\g  \delta^{-1}}  >0}
\left( c_{\g  \delta^{-1}} \right)^{w-1} \left( \frac{j(\g,z)}{j(\delta,z)} \right)^{-s}j(\delta,z)^{-k}.
\end{equation}
In the usual convention, for $\rho \in \C$ with $\rho \neq 0$ write $\rho=|\rho|e^{i \arg(\rho)}$ for $-\pi < \arg(\rho) \lqs \pi$ and
\begin{equation}\label{rho}
\rho^s=|\rho|^s e^{i \arg(\rho) s} \quad \text{for} \quad s\in \C.
\end{equation}
Note that
$$
c_{\g \delta^{-1}}=\left| \smallmatrix c_\g & d_\g \\ c_\delta & d_\delta \endsmallmatrix \right| >0 \implies \frac{j(\g,z)}{j(\delta,z)} \in \H \quad \text{for} \quad z \in \H
$$
 and so $(j(\g,z)/j(\delta,z))^{-s}$ in \eqref{dbleis3} is well-defined and a holomorphic function of $s\in \C$ and $z \in \H$. Proposition \ref{cgs} shows that $\ei_{s,k-s}(z, w)$ converges absolutely and uniformly on compact sets for which $2<\sigma<k-2$ and $\Re(w)< \sigma-1, k-1-\sigma$.

Define the {\em completed double Eisenstein series} as
\begin{equation}\label{complete}
\ei^*_{s, k-s}(z,w):=\left[\frac{  e^{s i \pi/2} \G(s) \G(k-s) \G(k-w)\zeta(1-w+s)\zeta(1-w+k-s)}{2^{3-w}\pi^{k+1-w}   \G(k-1) }\right] \ei_{s, k-s}(z,w).
\end{equation}
\begin{theorem}\label{maineis} Let $k \gqs 6$ be even. The  series $\ei^*_{s, k-s}(z,w)$ has an analytic continuation to all $s,w \in \C$ and as a function of $z$ is always in $S_k(\G)$. For any  $f$ in $\mathcal B_k$ we have
\begin{equation}\label{lslw}
\s{\ei^*_{s, k-s}(\cdot,w)}{f} =  L^*(f,s) L^*(f,w).
\end{equation}
\end{theorem}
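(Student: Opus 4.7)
The plan is to compute $\s{\ei_{s,k-s}(\cdot, w)}{f}$ by a Rankin-Selberg unfolding in the region of absolute convergence supplied by Proposition \ref{cgs}, and then use the resulting identity to analytically continue $\ei^*_{s,k-s}$ in $(s,w)$ by expanding it in the finite Hecke eigenbasis $\mathcal B_k$. The first step is to decouple the double sum: setting $\eta = \g\delta^{-1}$ and using the cocycle identity $j(\eta\delta, z) = j(\eta, \delta z)\, j(\delta, z)$ gives
\[
\ei_{s,k-s}(z, w) = \sum_{\delta \in B\backslash \G} \K(\delta z, s, w)\, j(\delta, z)^{-k}, \qquad \K(z, s, w) := \sum_{\substack{\eta \in B\backslash \G \\ c_\eta > 0}} c_\eta^{w-1}\, j(\eta, z)^{-s}.
\]
Since $k$ is even, the pairing $\delta \leftrightarrow -\delta$ shows that this outer sum equals twice the analogous sum over $\G_\infty\backslash \G$, so the usual unfolding against the strip $\{0 \lqs x \lqs 1,\ y > 0\}$ yields
\[
\s{\ei_{s,k-s}(\cdot, w)}{f} = 2\int_0^\infty \int_0^1 \K(z, s, w)\, \overline{f(z)}\, y^{k-2}\, dx\, dy.
\]

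Next I would compute the Fourier expansion of $\K$. Parameterizing $\eta \in B\backslash \G$ with $c_\eta > 0$ by coprime pairs $(c, d)$ with $c \gqs 1$, writing $d = qc + r$ with $0 \lqs r < c$, and combining the Lipschitz summation formula in $q$ with the Ramanujan sum identity $\sum_{c \gqs 1} c_c(m)\, c^{-(s+1-w)} = \sigma_{w-s}(m)/\zeta(s+1-w)$ produces
\[
\K(z, s, w) = \frac{(2\pi)^s e^{-i\pi s/2}}{\G(s)\, \zeta(s+1-w)} \sum_{m=1}^\infty m^{s-1}\, \sigma_{w-s}(m)\, e^{2\pi i m z}.
\]
Orthogonality in $x$ collapses the inner integral to a diagonal sum in $m$, and $\int_0^\infty y^{k-2} e^{-4\pi m y}\, dy = \G(k-1)/(4\pi m)^{k-1}$ reduces $\s{\ei_{s,k-s}(\cdot, w)}{f}$ to a constant times the classical Rankin-Selberg convolution
\[
\sum_{m=1}^\infty \frac{a_f(m)\, \sigma_{w-s}(m)}{m^{k-s}} = \frac{L(f, k-s)\, L(f, k-w)}{\zeta(k-s-w+1)}.
\]
Converting $L(f, k-s)$ and $L(f, k-w)$ to completed form via \eqref{lfe} introduces exactly the gamma factors $\G(k-s)$, $\G(k-w)$, the zeta values $\zeta(s+1-w)\zeta(k-s-w+1)$, and the phase $e^{i\pi s/2}$ appearing in the bracket of \eqref{complete}, establishing \eqref{lslw} on the convergence domain.

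For the analytic continuation, Proposition \ref{cgs} ensures $\ei_{s,k-s}(\cdot, w) \in S_k(\G)$ throughout the convergence region, so expanding in $\mathcal B_k$ and applying \eqref{lslw} gives
\[
\ei^*_{s,k-s}(z, w) = \sum_{f \in \mathcal B_k} \frac{L^*(f, s)\, L^*(f, w)}{\s{f}{f}}\, f(z).
\]
Since $\mathcal B_k$ is finite and each $L^*(f, \cdot)$ is entire, the right side is entire in $(s, w)$ and lies in $S_k(\G)$ as a function of $z$, furnishing the required continuation. The main technical obstacle is the delicate bookkeeping needed to match constants: tracking the factor of $2$ from the double cover $B\backslash\G \to \G_\infty\backslash\G$, the phase $e^{-i\pi s/2}$ arising from the principal-branch convention \eqref{rho} in the Lipschitz formula, and the combinatorics of $2$- and $\pi$-powers coming from $(4\pi)^{k-1} = 2^{k-1}(2\pi)^{k-1}$, all of which must recombine to exactly the normalization in \eqref{complete}.
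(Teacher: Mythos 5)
Your proposal is correct, and the constants check out: your unfolded expression
\[
\s{\ei_{s,k-s}(\cdot,w)}{f} \;=\; \frac{2\,(2\pi)^s e^{-i\pi s/2}\,\G(k-1)}{(4\pi)^{k-1}\,\G(s)\,\zeta(s+1-w)\,\zeta(k+1-s-w)}\,L(f,k-s)\,L(f,k-w)
\]
agrees exactly with the paper's \eqref{key} after writing $L^*=(2\pi)^{-\sigma}\G(\sigma)L$, and the final eigenbasis expansion \eqref{key2} is the same in both arguments. The route, however, is genuinely different. The paper multiplies by $\zeta(1-w+s)\zeta(1-w+k-s)$ to strip the coprimality conditions, reorganizes the resulting full-lattice sum by determinant into $2\sum_n n^{w-k}\,T_n\coh_k(z,s)$, and then gets \eqref{key} from self-adjointness of $T_n$ together with the previously established Cohen-kernel formula \eqref{innerc} from \cite{DO2010}; you instead decouple via $\eta=\g\delta^{-1}$, unfold directly against $f$, compute the Fourier expansion of the inner kernel by Lipschitz summation and Ramanujan sums, and finish with the Rankin--Selberg identity $\sum_m a_f(m)\sigma_{w-s}(m)m^{s-k}=L(f,k-s)L(f,k-w)/\zeta(k+1-s-w)$ for eigenforms. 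Both arguments ultimately rest on Hecke multiplicativity of the $a_f(m)$ --- the paper through $T_nf=a_f(n)f$, you through the Rankin--Selberg identity --- and both obtain the continuation from the finiteness of $\mathcal B_k$ and the entirety of $L^*(f,\cdot)$. What each buys: the paper's version is shorter given \eqref{innerc} and sets up the Hecke-action analysis of \S\ref{ax} (your intermediate objects are essentially \eqref{sskv1} and \eqref{confrm} there, the latter cited from Zagier); yours is self-contained and does not lean on \cite{DO2010}, at the cost of the branch and constant bookkeeping you flag. One small point of care: the Dirichlet series $\sum_m a_f(m)\sigma_{w-s}(m)m^{s-k}$ need not converge absolutely throughout the full initial domain $2<\sigma<k-2$, $\Re(w)<\sigma-1,\,k-1-\sigma$, so you should either restrict to a subregion where it does and extend \eqref{lslw} by analyticity of both sides, or note that the unfolded integral itself converges and represents the analytic continuation of that series; this is routine but worth a sentence.
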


It follows directly from \eqref{lslw} and \eqref{lfe} that $\ei^*_{s, k-s}(z,w)$ satisfies eight functional equations generated by:
\begin{eqnarray}
  \ei^*_{s, k-s}(z,w) &=&  \ei^*_{w, k-w}(z,s), \label{ll2} \\
  \ei^*_{s, k-s}(z,w) &=& (-1)^{k/2} \ei^*_{k-s, s}(z,w). \label{ll1}
\end{eqnarray}

The next result shows how $\ei^*_{s, k-s}$ is a generalization of the Rankin-Cohen bracket $[E_{k_1}, E_{k_2}]_n$.
\begin{prop} \label{th2}
For $n \in \Z_{\gqs 1}$ and even $k_1,k_2 \gqs 4$,
$$
n! [E_{k_1}, E_{k_2}]_n=\frac{2(-1)^{k_1/2}  \pi^{k} \G(k-1)}{(2\pi i)^{n} \zeta(k_1)\zeta(k_2) \G(k_1)\G(k_2) \G(k-n-1)} \ei^*_{k_1+n, k_2+n}(z,n+1).
$$
\end{prop}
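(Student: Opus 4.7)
The approach is to substitute the series expansions for $E_{k_1}^{(r)}$ and $E_{k_2}^{(n-r)}$ into the Rankin-Cohen bracket \eqref{rco}, collapse the resulting finite $r$-sum by the binomial theorem, and compare with the definitions of $\ei$ in \eqref{dbleis3} and $\ei^*$ in \eqref{complete}.

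For even $k$, writing $E_k(z) = \tfrac12 \sum_{\g \in B\backslash\G} j(\g,z)^{-k}$ and differentiating termwise gives
$$E_k^{(r)}(z) = \frac{(-1)^r \G(k+r)}{2 \G(k)} \sum_{\g \in B\backslash\G} c_\g^r\, j(\g,z)^{-k-r}.$$
Substituting this into \eqref{rco} and interchanging the finite $r$-sum with the double sum over $\g,\delta \in B\backslash\G$, the coefficient of $j(\g,z)^{-k_1} j(\delta,z)^{-k_2}$ becomes a polynomial in $u := c_\g/j(\g,z)$ and $v := c_\delta/j(\delta,z)$. The identities $\binom{k_1+n-1}{n-r}\G(k_1+r) = (k_1+n-1)!/(n-r)!$ and $\binom{k_2+n-1}{r}\G(k_2+n-r) = (k_2+n-1)!/r!$ cause the $r$-sum to collapse by the binomial theorem to $\frac{(k_1+n-1)!(k_2+n-1)!}{n!}(v-u)^n$.

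A direct computation gives $v - u = -c_{\g\delta^{-1}}/(j(\g,z)j(\delta,z))$, so the double sum takes the form $\sum_{\g,\delta \in B\backslash\G} c_{\g\delta^{-1}}^n / (j(\g,z)^{k_1+n} j(\delta,z)^{k_2+n})$, the two factors of $(-1)^n$ having cancelled. Pairs with $c_{\g\delta^{-1}}=0$ contribute nothing for $n\gqs 1$, and the substitution $\delta \mapsto -\delta$ shows that the $c_{\g\delta^{-1}}<0$ and $c_{\g\delta^{-1}}>0$ parts are equal (since $k_2$ is even, the sign changes in $c^n$ and $j(\delta)^{k_2+n}$ cancel). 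Thus the full double sum equals twice the restriction to $c_{\g\delta^{-1}}>0$, which by \eqref{dbleis3} at integer $s = k_1+n$, $w = n+1$ is exactly $\ei_{k_1+n,k_2+n}(z,n+1)$. Combining all factors gives
$$n![E_{k_1},E_{k_2}]_n = \frac{\G(k_1+n)\G(k_2+n)}{2\G(k_1)\G(k_2)}\, \ei_{k_1+n,k_2+n}(z,n+1).$$

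To finish, I invert \eqref{complete} with $s = k_1+n$, $w = n+1$, $k = k_1+k_2+2n$: the identities $e^{(k_1+n)i\pi/2} = i^{k_1+n} = (-1)^{k_1/2} i^n$ (valid because $k_1$ is even) and $2^{1-n}\pi^{k-n}/i^n = 2\pi^k/(2\pi i)^n$ rearrange the prefactor into exactly the constant appearing in the statement. The only real conceptual step is spotting the binomial collapse of the $r$-sum; the rest is careful bookkeeping of the powers of $2$, $\pi$, and $i$ through the passage from $\ei$ to $\ei^*$.
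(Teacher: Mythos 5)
Your argument is correct and is essentially the paper's own: the paper performs the identical computation (termwise differentiation of the series, binomial collapse of the $r$-sum, and the identity $c_\g j(\delta,z)-c_\delta j(\g,z)=c_{\g\delta^{-1}}$) in the slightly more general setting of two Poincar\'e series in Proposition \ref{rc}, then specializes via Corollary \ref{poi2} to obtain \eqref{eisen}, from which Proposition \ref{th2} follows. Your intermediate formula $n![E_{k_1},E_{k_2}]_n=\G(k_1+n)\G(k_2+n)\,\ei_{k_1+n,k_2+n}(z,n+1)/(2\G(k_1)\G(k_2))$ matches $A_{k_1,k_2}(0,0)_n$ there, and the passage to $\ei^*$ through \eqref{complete} checks out.
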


Another way to understand these double Eisenstein series is through their connections to non-holomorphic Eisenstein series. Any smooth function, transforming with weight $k$ and with polynomial growth as $y \to \ci$ may be projected into $S_k$ with respect to the Petersson scalar product. See \cite[\S 3.2]{DO2010} and the contained references. Denote this holomorphic projection by $\pi_{hol}$.
\begin{prop}\label{k1k2}
Let  $k=k_1+k_2 \gqs 6$ for  even $k_1,k_2 \gqs 0$. Then for all $s,w \in \C$
$$
\ei^*_{s, k-s}(z,w)=\pi_{hol}\Bigl[ (-1)^{k_2/2} y^{-k/2}E^*_{k_1}(z,  u)E^*_{k_2}(z,  v)/(2\pi^{k/2}) \Bigr]
$$
where
\begin{equation}\label{uv}
u=(s+w-k+1)/2, \quad v=(-s+w+1)/2.
\end{equation}
\end{prop}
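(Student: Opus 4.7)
The strategy is to reduce the identity to checking Petersson inner products against the Hecke basis $\mathcal B_k$. Both sides of the claimed identity lie in $S_k(\G)$: the right-hand side by construction of $\pi_{hol}$, the left-hand side by Theorem \ref{maineis}, which further evaluates $\langle \ei^*_{s,k-s}(\cdot,w),f\rangle = L^*(f,s)L^*(f,w)$ for every $f\in\mathcal B_k$. Hence it is enough to show
$$
\frac{(-1)^{k_2/2}}{2\pi^{k/2}}\bigl\langle y^{-k/2}E^*_{k_1}(\cdot,u)E^*_{k_2}(\cdot,v),\,f\bigr\rangle = L^*(f,s)L^*(f,w)
$$
for every $f\in\mathcal B_k$, where we use the defining property $\langle\pi_{hol}(g),f\rangle=\langle g,f\rangle$ of holomorphic projection.

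The inner product on the left is a Rankin--Selberg integral, which we unfold against $E_{k_1}(z,u)=\sum_{\g\in\G_\infty\backslash\G}\Im(\g z)^u(j(\g,z)/|j(\g,z)|)^{-k_1}$. An elementary computation with the transformation rules (using $j\bar j=|j|^2$) shows that the full integrand is $\G$-invariant, and unfolding collapses the integral onto the strip $[0,1]\times(0,\infty)$, a fundamental domain for $\G_\infty\backslash\H$. Inserting the Fourier expansions of $f$ and $E^*_{k_2}(z,v)$ --- whose $m$-th Fourier coefficient for $m\neq 0$ is a known multiple of $\sigma_{2v-1}(|m|)$ times a Whittaker function $W_{\sgn(m)k_2/2,\,v-1/2}(4\pi|m|y)$ --- the $x$-integration picks out matching frequencies, the $y$-integration is a standard Mellin transform of a Whittaker function (giving an explicit $\G$-quotient), and the sum over $n$ is carried out by Rankin's identity
$$
\sum_{n\geq 1}\frac{a_f(n)\sigma_{2v-1}(n)}{n^{\rho}}=\frac{L(f,\rho)L(f,\rho-2v+1)}{\zeta(2\rho-2v+1)}.
$$
The values $u=(s+w-k+1)/2$, $v=(-s+w+1)/2$ are tuned precisely so that the two $L$-arguments become $s$ and $w$, giving $L^*(f,s)L^*(f,w)$ once the $\G$- and $(2\pi)$-factors are absorbed into the completed $L$-values.

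The main obstacle is then the purely algebraic verification that the accumulated factors --- $\theta_{k_1}(u)\theta_{k_2}(v)$ from the completions $E^*_{k_i}$, the $\G$-quotient from the Whittaker--Mellin transform, the denominator $\zeta(2\rho-2v+1)$ from Rankin's identity, the $(2\pi)$-powers implicit in $L^*(f,\cdot)$, and the prefactor $(-1)^{k_2/2}/(2\pi^{k/2})$ --- combine to reproduce exactly the bracket appearing in \eqref{complete}. In particular the $\zeta$-arguments $2u$, $2v$ natural to the Eisenstein series must be recast as $1-w+s$ and $1-w+k-s$ by means of the functional equation of $\zeta$ and the $\G$-duplication formula, and these same manipulations account for the factor $e^{si\pi/2}$ in \eqref{complete}. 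A lesser technical point, dealt with at the outset, is that $E^*_{k_1}E^*_{k_2}$ does not decay at the cusp; this is handled by first working in a range of $(s,w)$ where the constant term of one of the Eisenstein series is annihilated by the cuspidality of $\overline f$, and then extending the identity meromorphically in $s$ and $w$.
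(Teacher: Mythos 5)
Your proposal is correct and takes essentially the same route as the paper: both proofs reduce the identity to matching Petersson inner products against the Hecke eigenbasis of $S_k(\G)$, using $\s{\pi_{hol}(g)}{f}=\s{g}{f}$ together with \eqref{lslw}. The only difference is that the paper simply cites \cite[Prop.~2.1]{DO2010} for the evaluation $\s{y^{-k/2}E^*_{k_1}(\cdot,u)E^*_{k_2}(\cdot,v)}{f}=$ const $\cdot\, L^*(f,s)L^*(f,w)$, whereas you re-derive it by the standard Rankin--Selberg unfolding (essentially the computation surrounding \eqref{confrm}); one small caveat is that the Rankin identity as you display it is the one for analytically normalized Hecke eigenvalues, so with the paper's arithmetic normalization of $a_f(n)$ the denominator should be $\zeta(2\rho-2v+2-k)$ rather than $\zeta(2\rho-2v+1)$.
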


\subsection{Values of $L$-functions.} \label{valsl}
For $f \in \mathcal B_k$ let $K_f$ be the field obtained by adjoining to $\Q$ the Fourier coefficients of $f$. We will recall Zagier's proof of the next result in $\S \ref{per1}$.

\begin{theorem} {\bf(Manin's Periods Theorem)}\label{manin}
For each $f \in \mathcal B_k$ there exist $\omega_+(f)$, $\omega_-(f) \in \R$ such that
$$
L^*(f,s)/\omega_+(f), \quad L^*(f,w)/\omega_-(f) \in K_f
$$
for all $s,w$ with $1 \lqs s, w \lqs k-1$ and $s$ even, $w$ odd.
\end{theorem}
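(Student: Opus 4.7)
The plan is to follow the strategy implicit in Zagier's formula \eqref{zag}: interpret the quadratic quantity $L^*(f, n+1) L^*(f, n+k_2)$ as a Petersson inner product of $f$ against a concrete modular form with rational Fourier coefficients, and then extract from the resulting family of relations a single pair of real constants $\omega_\pm(f)$ governing all critical values.

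The first step is to establish that each Rankin-Cohen bracket $[E_{k_1}, E_{k_2}]_n$ has rational Fourier coefficients and lies in $S_k(\G)$. Rationality follows from \eqref{rco} together with the standard rational $q$-expansion of $E_k$, and cuspidality for $n \gqs 1$ follows because every term $E_{k_1}^{(r)} E_{k_2}^{(n-r)}$ has vanishing constant term; for $n = 0$ one first projects onto $S_k$ along the (rational) Eisenstein-cusp splitting of $M_k$. The second step is the classical Hecke-theoretic fact that whenever $g \in S_k(\G)$ has rational Fourier coefficients, $\langle g, f \rangle/\langle f, f \rangle \in K_f$; this follows from the $\mathrm{Gal}(\overline{\Q}/\Q)$-equivariance of the eigenform basis $\mathcal{B}_k$, since the Galois orbit of the projection coefficient of $g$ onto $f$ coincides with the projection coefficients onto the Galois conjugates of $f$. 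Substituting into \eqref{zag} and cancelling the (explicit, rational) prefactor yields the master relation
\[
\frac{L^*(f, n+1)\, L^*(f, n+k_2)}{\langle f, f \rangle} \in K_f
\]
for every admissible triple $(k_1, k_2, n)$ with $k_1, k_2 \gqs 4$ even and $k_1 + k_2 + 2n = k$.

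The third step is to assemble these quadratic relations into the single-variable statement of the theorem. The accessible pairs $(s, w) = (n+1, n+k_2)$ automatically have $s + w$ odd, so each links one even critical value to one odd critical value. Combining these with the functional equation \eqref{lfe}, which identifies $L^*(f, s)$ with $(-1)^{k/2}L^*(f, k-s)$, a short combinatorial check shows that every odd critical value and every even critical value can be joined by a chain of such multiplicative relations. Choosing any even $s_0 \in [1, k-1]$ with $L^*(f, s_0) \neq 0$ (if no such $s_0$ exists the claim for $\omega_+$ is vacuous) and setting
\[
\omega_+(f) := L^*(f, s_0), \qquad \omega_-(f) := \langle f, f \rangle/L^*(f, s_0),
\]
both lie in $\R$ since $L^*(f, s) \in \R$ for real $s$ and $\langle f, f \rangle \in \R_{>0}$. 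The claimed algebraicities then follow directly from the master relation applied to appropriate pairs and chaining.

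The main obstacle is the combinatorial connectivity in step three: the constraints $k_1, k_2 \gqs 4$ and $n \gqs 0$ force every admissible pair to satisfy $|s - w| \gqs 3$ and $s + w \lqs k - 3$, thereby excluding the endpoints and the diagonal of the critical strip. One must verify, using \eqref{lfe} and possibly chains of length greater than one, that the resulting bipartite graph on critical values has the property that all odd values collapse to a single $K_f$-equivalence class (and likewise for even values); the cases near $s = k/2$ and near the endpoints $s = 1, k-1$ are the most delicate to handle.
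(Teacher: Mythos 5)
Your overall strategy is the paper's (which is Zagier's): realize $L^*(f,n+1)L^*(f,n+k_2)$ as a Petersson product of $f$ against the rational-coefficient cusp form $[E_{k_1},E_{k_2}]_n$ via \eqref{zag}, invoke Shimura's rationality lemma (Lemma \ref{shim} in the paper) to conclude $L^*(f,n+1)L^*(f,n+k_2)/\s{f}{f}\in K_f$, and then untangle these quadratic relations using \eqref{lfe}. The gaps are all in your third step. First, nonvanishing: every division you perform --- defining $\omega_-(f)=\s{f}{f}/L^*(f,s_0)$, and deducing $L^*(f,s)/L^*(f,s_0)\in K_f$ by dividing two master relations sharing an odd factor $L^*(f,w)$ --- requires the divisor to be nonzero, and you supply no mechanism for this. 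Your fallback (``if no such $s_0$ exists the claim for $\omega_+$ is vacuous'') is wrong: the theorem still demands some $\omega_-(f)$, and your formula for it is then undefined. The paper anchors at the single pair $(k-2,k-1)$ precisely because the Euler product converges absolutely for $\Re(s)>k/2+1/2$, forcing $L^*(f,k-1)L^*(f,k-2)\neq 0$; it then needs only one division, by that guaranteed-nonzero quantity. Interior critical values (the central one in particular, which vanishes identically when $k\equiv 2 \bmod 4$) can be zero, so a chain routed through arbitrary intermediate values can break.

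Second, the connectivity you defer is not merely unexecuted --- under your constraint $k_1,k_2\gqs 4$ it fails where it matters most. A realizable pair $(n+1,n+k_2)$ has sum $k+1-k_1$ and difference $k_2-1$, and a short check shows that no pair equivalent under \eqref{lfe} to $\{1,2\}\sim\{k-2,k-1\}$ arises with both weights at least $4$; more generally the even values $2$ and $k-2$ cannot be paired with the odd values $1$ and $k-1$. But $\{k-2,k-1\}$ is exactly the guaranteed-nonvanishing anchor. The paper gets these pairs only by allowing $k_1$ or $k_2=2$ (the quasi-modular $E_2$, handled in Kohnen--Zagier, as the parenthetical after \eqref{qwet} indicates); without that extension, or some substitute input at the edge of the critical strip, your argument cannot produce well-defined periods $\omega_{\pm}(f)$ nor reach the endpoint values. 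So the structure of your proof is right, but the nonvanishing input and the $k_i=2$ case are genuinely missing, not just unwritten details.
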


Let $m \in \Z$ satisfy $m \lqs 0$ or $m \gqs k$.
Then for these values outside the critical strip we have, according to \cite[\S 3.4]{KonZ} and the
references therein,
$$
L^*(f,m) \in \mathscr P[1/\pi]
$$
where $\mathscr P$ is the ring of periods: complex numbers that may be expressed as an integral of an algebraic function over an algebraic domain.
In contrast to the periods \eqref{periods}, we do not have much more precise information about the algebraic properties of the values $L^*(f,m)$. A special case of a theorem by Koblitz \cite{Ko} shows, for example, that
$$
L^*(f,m) \ \ \not\in \ \ \Z \cdot L^*(f,1)+\Z \cdot L^*(f,2)+ \cdots +\Z \cdot L^*(f,k-1).
$$

Let $K\bigl(\ei^*_{s,k-s}(\cdot,w)\bigr)$ be the field obtained by adjoining to $\Q$ the Fourier coefficients of $\ei^*_{s,k-s}(\cdot,w)$ and let $\omega_+(f)$, $\omega_-(f)$ be as given in Theorem \ref{manin}. Then we have
\begin{theorem} \label{kdkd} For all $f \in \mathcal B_k$ and  $s \in \C$
\begin{align*}
    L^*(f,s)/\omega_+(f) & \in K\bigl(\ei^*_{s,k-s}(\cdot,k-1)\bigr) K_f,\\
    L^*(f,s)/\omega_-(f) & \in K\bigl(\ei^*_{k-2,2}(\cdot,s)\bigr) K_f.
\end{align*}
\end{theorem}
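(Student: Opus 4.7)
I would combine the kernel identity of Theorem \ref{maineis} with the Eichler-Shimura-Manin theory to express $L^*(f,s)/\omega_+(f)$ as the eigenform-coefficient of the kernel in its Hecke decomposition. For the first assertion, set $g_s := \ei^*_{s,k-s}(\cdot, k-1)$. Theorem \ref{maineis} yields
$$\s{g_s}{f} = L^*(f,s)\, L^*(f,k-1),$$
and since $k-1$ is odd, Manin's Periods Theorem \ref{manin} produces $\alpha \in K_f$ with $L^*(f,k-1) = \alpha\, \omega_-(f)$. So it is enough to show $\s{g_s}{f}/(\omega_+(f)\omega_-(f)) \in K(g_s) K_f$.

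Expand $g_s = \sum_{h \in \mathcal B_k} \lambda_h h$ with $\lambda_h = \s{g_s}{h}/\s{h}{h}$, giving $\s{g_s}{f} = \lambda_f \s{f}{f}$. The main algebraic step is to prove $\lambda_f \in K(g_s) K_f$. Multiplicity one and the $\Q$-rationality of the Hecke operators $T_m$ provide a polynomial $P$ in the $T_m$'s with $\Q$-coefficients that acts as the identity on the Galois orbit $[f] \subset \mathcal B_k$ and annihilates the remaining eigenforms. Then
$$P g_s = \sum_{\iota: K_f \hookrightarrow \overline{\Q}} \lambda_{\iota f}\,(\iota f)$$
still has Fourier coefficients in $K(g_s)$, and solving the resulting Vandermonde-type linear system in the $\lambda_{\iota f}$'s places each such coefficient in $K(g_s)$ composed with the Galois closure of $K_f$. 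A descent via the subgroup of $\mathrm{Gal}(\overline{\Q}/K_f)$ fixing $f$ then gives $\lambda_f \in K(g_s) K_f$.

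Finally I would invoke the classical identity $\s{f}{f}/(\omega_+(f)\omega_-(f)) \in K_f$, which is a standard consequence of the Eichler-Shimura isomorphism and is built into the normalization of $\omega_\pm(f)$ recalled in $\S\ref{per1}$. Combining,
$$\frac{L^*(f,s)}{\omega_+(f)} = \frac{\lambda_f}{\alpha}\cdot \frac{\s{f}{f}}{\omega_+(f)\omega_-(f)} \in K(g_s) K_f,$$
establishing the first assertion. The second follows from the same argument applied to $g' := \ei^*_{k-2,2}(\cdot,s)$: Theorem \ref{maineis} gives $\s{g'}{f} = L^*(f,k-2) L^*(f,s)$, and since $k-2$ is even, Manin now supplies $L^*(f,k-2)/\omega_+(f) \in K_f$, which swaps the roles of $\omega_+(f)$ and $\omega_-(f)$. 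I expect the Galois-theoretic isolation of $\lambda_f$ to be the main obstacle: we must land in $K(g_s) K_f$ rather than the larger compositum $K(g_s)\prod_j K_{f_j}$, and this is precisely where multiplicity one and the $\Q$-rationality of the Hecke algebra become essential.
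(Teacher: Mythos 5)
Your argument is correct and follows essentially the same route as the paper: pair $\ei^*_{s,k-s}(\cdot,k-1)$ (resp.\ $\ei^*_{k-2,2}(\cdot,s)$) against $f$ via Theorem \ref{maineis}, divide by the period normalizations $\omega_\pm(f)$ with $\omega_+(f)\omega_-(f)=\s{f}{f}$, and reduce everything to the rationality of $\s{g}{f}/\s{f}{f}$ in $K_g K_f$. The Hecke-idempotent/Galois-descent portion of your write-up is precisely a proof of the paper's Lemma \ref{shim}, which the authors simply cite from Shimura, so you have only re-derived a quoted lemma rather than taken a genuinely different path.
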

The above theorem gives the link between
 Fourier coefficients of double Eisenstein series and arbitrary values of $L$-functions. We hope that this interesting connection will help shed light on $L^*(f,s)$ for $s$ outside the set $\{1,2,\dots,k-1\}$.  See the further discussion in \S \ref{out} for the case when $s \in \Z$.

In \S \ref{twper} we  also prove  results analogous to Theorem \ref{kdkd} for   the completed $L$-function of $f$ twisted by $e^{2 \pi i m p/q}$ for $p/q\in \Q$:
\begin{equation} \label{Mellin}
L^*(f, s; p/q):=\frac{\G(s)}{(2 \pi)^s} \sum_{m=1}^{\infty}
\frac{a_f(m)e^{2 \pi i m p/q}} {m^s} = \int_0^\infty f(iy+p/q) y^{s-1} \, dy.
\end{equation}

\subsection{Non-holomorphic double Eisenstein series}

\begin{defin} For $z \in \H$, $w$, $s$, $s' \in \C$,   we define the {\em non-holomorphic double Eisenstein series} as
\begin{equation}\label{dbleis2}
    \de(z,w;s,s'):=
\sum_{\g, \, \delta \in \G_\infty \backslash \G \atop \g\delta^{-1} \neq
\G_\infty} \frac {\Im(\g z)^{s} \Im(\delta z)^{s'}}{
|c_{\g \delta^{-1}}|^w }.
\end{equation}
\end{defin}

A simple comparison with \eqref{eis} shows it is absolutely and uniformly convergent for $\Re(s)$, $\Re(s')>1$ and $\Re(w)>0$. (This domain of convergence is improved in Proposition \ref{42}.)
The most symmetric form of \eqref{dbleis2} is when $w=s+s'$. Define
\begin{multline}\label{dbleis20}
\quad \de^*(z;s,s'):= 4\pi^{-s-s'} \G(s)\G(s') \zeta(3s+s')\zeta(s+3s')\de(z,s+s';s,s')\\
+2\theta(s)\theta(s')E(z,s+s'). \quad
\end{multline}
\begin{theorem} \label{cpl}
The completed double Eisenstein series
$\de^*(z;s,s')$ has a meromorphic continuation to all $s,s' \in \C$ and satisfies  the functional equations
 \begin{eqnarray*}
   \de^*(z;s,s') &=& \de^*(z;s',s), \label{tfei1}\\
   \de^*(z;s,s') &=& \de^*(z;1-s,1-s'). \label{tfei2}
 \end{eqnarray*}
For any even  Maass Hecke eigenform $u_j$,
$$
\s{\de^*(z;s,s')}{u_j} = L^*(u_j,s+s'-1/2)L^*(u_j,s'-s+1/2).
$$
\end{theorem}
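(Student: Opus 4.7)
The plan is to reduce the double sum defining $\de$ in \eqref{dbleis2} to a single sum against a one-variable kernel, then unfold. The substitution $\delta=\eta\g$ gives $\g\delta^{-1}=\eta^{-1}$, hence $|c_{\g\delta^{-1}}|=|c_\eta|$, and the coset bookkeeping shows $\eta$ runs over left $\G_\infty$-cosets as $\delta$ does. Thus
$$
\de(z,w;s,s') = \sum_{\g\in\G_\infty\backslash\G}\Im(\g z)^s\,\ke(\g z;w,s'),\qquad \ke(z;w,s'):=\sum_{\eta\in\G_\infty\backslash\G,\,\eta\notin\G_\infty}\frac{\Im(\eta z)^{s'}}{|c_\eta|^w}.
$$
A M\"obius inversion over $\gcd(c_\eta,d_\eta)$ reduces $\ke$ to $\zeta(w+2s')^{-1}\cdot y^{s'}\sum_{c\gqs 1}c^{-w}\sum_{d\in\Z}|cz+d|^{-2s'}$, and the Lipschitz summation formula applied to the inner sum yields an explicit Fourier expansion of $\ke$: a $y^{s'}$ piece, a multiple of $y^{1-s'}$ involving $\zeta(w+2s'-1)/\zeta(w+2s')$, and a Whittaker-type series whose $n$-th coefficient is a divisor sum in $c\mid n$ times $K_{s'-1/2}(2\pi|n|y)$. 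This yields the meromorphic continuation of $\de^*$ from those of $E$ and $\zeta$.

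The symmetry $(s,s')\leftrightarrow(s',s)$ of $\de^*$ is manifest in \eqref{dbleis2}: the interchange $\g\leftrightarrow\delta$ preserves $|c_{\g\delta^{-1}}|$ and sends the summand for $(s,s')$ to that for $(s',s)$, while both the prefactor $4\pi^{-s-s'}\G(s)\G(s')\zeta(3s+s')\zeta(s+3s')$ and the correction $2\theta(s)\theta(s')E(z,s+s')$ in \eqref{dbleis20} are manifestly symmetric. For the Petersson product, cuspidality of $u_j$ gives $\s{E(\cdot,s+s')}{u_j}=0$, so only the $\de$-part of $\de^*$ contributes. Unfolding the outer sum produces
$$
\s{\de(\cdot,s+s';s,s')}{u_j}=\int_0^\infty\int_0^1 y^{s-2}\,\ke(z;s+s',s')\,\overline{u_j(z)}\,dx\,dy,
$$
and the $x$-integration pairs the $n$-th Fourier coefficients. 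Since $u_j$ is an even Hecke eigenform, the non-zero Fourier part of $\ke$ contributes a Dirichlet series $\sum_{n\gqs 1}a_j(n)\sigma_\alpha(n)n^{-\beta}$ for shifts $\alpha,\beta$ linear in $s,s'$, which factors by the Hecke relation as $L(u_j,\beta)L(u_j,\beta-\alpha)/\zeta(2\beta-\alpha)$ with $\beta=s+s'-1/2$, $\beta-\alpha=s'-s+1/2$. The remaining $y$-integral is the classical Mellin transform of $K_{s'-1/2}(y)K_{it_j}(y)$, producing four Gamma factors which, together with the prefactor in \eqref{dbleis20}, complete the two $L$-values into $L^*(u_j,s+s'-1/2)L^*(u_j,s'-s+1/2)$.

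The functional equation $(s,s')\mapsto(1-s,1-s')$ is the delicate point. Using the Fourier expansion of $\ke(z;s+s',s')$ from the first paragraph, one rewrites $\de^*$ as a sum of pieces proportional to $E^*(z,s+s')$ or to products involving $\theta$; the symmetry under $s\mapsto 1-s$, $s'\mapsto 1-s'$ then follows from \eqref{eisf} for $E^*$ and \eqref{the} for $\theta$. The correction $2\theta(s)\theta(s')E(z,s+s')$ in \eqref{dbleis20} is exactly what is needed to cancel the asymmetric contribution of the $y^{1-s'}$ piece of $\ke$ and render the remaining expression symmetric.

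The main obstacle is the accurate bookkeeping of Gamma, $\pi$ and $\zeta$ factors across the M\"obius inversion, Mellin transform and completion, together with verifying the $(s,s')\mapsto(1-s,1-s')$ functional equation at the function-theoretic level (since cuspidal uniqueness does not determine the Eisenstein part). The unfolding converges absolutely in the region where \eqref{dbleis2} converges, and the result extends by meromorphic continuation.
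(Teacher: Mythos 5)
Your reduction of the double sum to $\de(z,w;s,s')=\sum_{\g\in\G_\infty\backslash\G}\Im(\g z)^s\,\ke(\g z;w,s')$ is correct, the $(s,s')\leftrightarrow(s',s)$ symmetry argument is fine (and is how the paper does it too), and your Rankin--Selberg unfolding for $\s{\de^*(\cdot;s,s')}{u_j}$ is a legitimate alternative to the paper's route, valid in the region of absolute convergence and extendable by continuation once the continuation is established. The gaps are in the other two claims. First, the meromorphic continuation does \emph{not} follow ``from those of $E$ and $\zeta$'' once you have the Fourier expansion of your one-variable kernel. Only the constant-term pieces of $\ke(\cdot;w,s')$ produce Eisenstein series when summed against $\Im(\g z)^s$; the Whittaker part, summed over $\G_\infty\backslash\G$, is an incomplete Poincar\'e series (a sum of $\Im(\g z)^s$ times a rapidly decaying automorphic-type function), and continuing such an object in $s$ and $s'$ is precisely the hard analytic content of the theorem. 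The paper (following Diaconu--Goldfeld) handles it by spectrally decomposing the rapidly decaying part of the kernel into a cuspidal sum and a continuous-spectrum integral, continuing the cuspidal sum via Stirling decay against Weyl's law, and continuing the integral by contour shifts that pick up additional Eisenstein residue terms (compare \eqref{ei} with \eqref{ei2}). Your proposal has no mechanism playing this role.

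Second, your argument for $\de^*(z;s,s')=\de^*(z;1-s,1-s')$ rests on rewriting $\de^*$ ``as a sum of pieces proportional to $E^*(z,s+s')$ or to products involving $\theta$,'' with the remainder asserted to be symmetric. That decomposition cannot exist: the inner-product formula you prove in the same breath shows $\de^*$ has a nonzero projection onto every even Maass cusp form, so it is not a combination of Eisenstein series and $\theta$-factors, and the symmetry of the cuspidal and continuous-spectrum components is not visible from the Fourier expansion of the collapsed kernel (note, e.g., that $\zeta(s+3s')\mapsto\zeta(4-s-3s')$ under the substitution, which is not governed by the functional equation of $\zeta$ alone). In the paper the functional equation comes from the spectral expansion: the cuspidal part is symmetric because $L^*(u_j,1-s)=L^*(u_j,s)$ as in \eqref{maassl0}, the continuous part because $\Omega(1-s,1-s';r)=\Omega(s,s';r)$, and the Eisenstein terms match only after accounting for the residues acquired during the continuation. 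So the spectral decomposition is not an optional refinement here --- it is the missing idea needed for both the continuation and the second functional equation.
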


\section{Further background results and notation}
We need to introduce two more families of modular forms.

\begin{defin} For $z \in \H$, $k \geqslant 4$ in $2 \Z$ and $m \in
\Z_{\geqslant 0}$  the holomorphic {\em Poincar\'e series} is
\begin{equation}\label{poin}
    P_{k}(z;m):=\sum_{\g \in \G_\infty \backslash \G} \frac{e^{2\pi i m
\g z} }{j(\g,z)^{k}} \quad = \quad \frac{1}{2} \sum_{\g \in B \backslash \G} \frac{e^{2\pi i m
\g z} }{j(\g,z)^{k}}.
\end{equation}
\end{defin}
For $m \geqslant 1$ the series $P_{k}(z;m)$ span $S_k(\G)$.
The Eisenstein series $E_k(z)=P_{k}(z;0)$ is not a cusp form but is in the space $M_k(\G)$. The second family of modular forms is based on a series due to Cohen in \cite{Cohen}.

\begin{defin}
The {\em generalized Cohen  kernel} is given by
\begin{equation} \label{cohk}
\coh_k(z,s;p/q) := \frac{1}{2}\sum_{\g \in \G} (\g z+p/q)^{-s} j(\g,z)^{-k}
\end{equation}
for $p/q \in \Q$ and $s \in \C$ with $1<\Re(s)<k-1$.
\end{defin}
In \cite[Section 5]{DO2010} we studied  $\coh_k(z,s;p/q)$  (the factor $1/2$ is included to keep the notation consistent with \cite{DO2010} where $\G=\PSL(2,\Z)$). We showed that, for each $s \in \C$ with $1<\Re(s)<k-1$, $\coh_k(z,s;p/q)$  converges to an element of $S_k(\G)$, with a meromorphic continuation to all $s\in \C$.
From   \cite[Prop. 5.4]{DO2010} we have
\begin{equation}\label{innerc}
\s{\mathcal C_k(\cdot, s;p/q)}{f}= 2^{2-k} \pi e^{-s i \pi/2}
\frac{\G(k-1)}{\G(s) \G(k-s)} L^*(f, k-s; p/q)
\end{equation}
which is a generalization of Cohen's lemma in \cite[\S 1.2]{KZ}.
For simplicity we write $\coh_k(z,s)$ for $\coh_k(z,s;0)$.
The twisted $L$-functions satisfy
\begin{equation}\label{conj2}
 \overline{L^*(f,s; p/q)} = L^*(f,\overline{s}; -p/q)
\end{equation}
and
\begin{equation}\label{ww}
 q^sL^{*}(f, s; p/q) = (-1)^{k/2} q^{k-s} L^{*}(f, k-s; -p'/q)
\end{equation}
for $pp'\equiv 1 \mod q$, as in \cite[App. A.3]{KMV}.

Define
$
\mathcal M_n := \left\{ \left( \smallmatrix a & b \\ c & d \endsmallmatrix \right) \ \Bigl| \ a,b,c,d \in \Z, ad-bc=n\right\}.
$
Thus $\mathcal M_1=\G$. For $k \in \Z$ and $g: \H \to \C$ set
$$(g|_k \g)(z):=\det(\g)^{k/2} g(\g z) j(\g, z)^{-k}$$
for all $\g \in \mathcal M_n$.
The weight $k$ Hecke operator $T_n$ acts on $g \in M_k$ by
\begin{equation}\label{ho}
  (T_n g)(z) := n^{k/2-1} \sum_{\g \in \G \backslash \mathcal M_n} (g|_k \g)(z) = n^{k-1} \sum_{ad=n \atop a,d>0} d^{-k} \sum_{0 \leqslant b <d} g\left( \frac{az+b}{d}\right).
\end{equation}

\section{Basic properties of double Eisenstein series} \label{holeis}
We work more generally in this section with $\G$ a Fuchsian group of the first kind containing at least one cusp. Set
\begin{equation}\label{eg}
\varepsilon_\G := \#\{\G \cap \{-I\}\}.
\end{equation}
Label the finite number of inequivalent cusps $\ca$, $\cb$ etc and let $\G_\ca$ be the subgroup of $\G$ fixing $\ca$. There exists a corresponding scaling matrix $\sa \in \SL(2,\R)$ such that $\sa \ci = \ca$ and
$$
\sa^{-1} \G_\ca \sa = \begin{cases} B \cup -B & \text{ if } -I \in \G \quad (\varepsilon_\G =1)\\
B & \text{ if } -I \not\in \G \quad (\varepsilon_\G =0).
\end{cases}
$$
Also set $\G^*_\ca:=\sa B \sa^{-1}$.

We recall some facts about $E_{k,\ca}(z,s)$, the non-holomorphic Eisenstein series associated to the cusp $\ca$ - see for example \cite[Chap. 3]{Iwsp}, \cite[\S 2.1]{DO2010}. It is defined as
$$
E_{k,\ca}(z,s):=  \sum_{\g \in \G_\ca \backslash \G} \Im (\sa^{-1} \g z)^s \left(\frac{j(\sa^{-1} \g, z)}{|j(\sa^{-1} \g, z)|}\right)^{-k}
$$
and absolutely convergent for $\Re(s)>1$. Put $E^*_{k,\ca}(z,s):=\theta_k(s)E_{k,\ca}(z,s)$ as in \eqref{compe}. Then we have
the expansion
\begin{equation}\label{eoa}
E^*_{0,\ca}(\sb z,s)=\delta_{\ca \cb} \theta(s)y^s + \theta(1-s) Y_{\ca \cb}(s) y^{1-s}+ \sum_{l \neq 0}Y_{\ca \cb}(l,s)W_s(lz)
\end{equation}
and
\begin{equation}\label{yuk}
E^*_{k,\ca}(\sb z,s)=\delta_{\ca \cb} \theta_k(s)y^s + \theta_k(1-s) Y_{\ca \cb}(s) y^{1-s}+ O(e^{-2\pi y})
\end{equation}
as $y \to \ci$ for all $k \in 2\Z$. Also, its functional equation is
\begin{equation}\label{sucu}
E^*_{k,\ca}(z,1-s)=\sum_\cb Y_{\ca \cb}(1-s) E^*_{k,\cb}(z,s).
\end{equation}
We gave the coefficients $Y_{\ca \cb}(s)$ and $Y_{\ca \cb}(l,s)$ explicitly in the case of $\G=\SL(2,\Z)$ following \eqref{fec}, and in general they involve series containing Kloosterman sums, see \cite[(3.21),(3.22)]{Iwsp}.

For the natural generalization of (\ref{dbleis3}) we define the {\em double Eisenstein series associated to the cusp $\ca$} as
\begin{equation}\label{dbleis5}
\ei_{s, k-s, \ca}(z,w):= \sum_{ \g, \, \delta \in \G^*_\ca \backslash \G, \  c_{\sa^{-1}\g  \delta^{-1}\sa}  >0}
\left( c_{\sa^{-1}\g  \delta^{-1}\sa} \right)^{w-1}  \left( \frac{j(\sa^{-1}\g,z)}{j(\sa^{-1}\delta,z)} \right)^{-s} j(\sa^{-1}\delta,z)^{-k}
\end{equation}
so that
\begin{equation}\label{dbleis4}
\ei_{s, k-s, \ca}(\sa z,w)=  j(\sa ,z)^{k}
\sum_{ \g, \, \delta \in B \backslash \G' \atop  c_{\g  \delta^{-1}}  >0}
\left( c_{\g  \delta^{-1}} \right)^{w-1} \left( \frac{j(\g,z)}{j(\delta,z)} \right)^{-s} j(\delta,z)^{-k}
\end{equation}
for $\G'= \sa^{-1}\G \sa$ which is also a Fuchsian group of the first kind.
To establish an initial domain of absolute convergence for \eqref{dbleis4}
 we  consider
 \begin{equation}\label{abseis}
\sum_{\g, \, \delta \in B \backslash \G' \atop  c_{\g  \delta^{-1}}  >0}
\left| \left( c_{\g  \delta^{-1}} \right)^{w-1}  \left( \frac{j(\g,z)}{j(\delta,z)} \right)^{-s} j(\delta,z)^{-k}\right|.
\end{equation}
Recalling \eqref{rho}, we see that
$$
|\rho^s|=|\rho|^\sigma e^{-t \arg(\rho)} \ll_t |\rho|^\sigma \quad \text{for} \quad s=\sigma+it \in \C.
$$
Therefore, with $r=\Re(w)$ and $\Im(\g z)=y |j(\g,z)|^{-2}$ we deduce that \eqref{abseis} is bounded by a constant depending on $s$ times
\begin{equation}\label{abseis2}
 y^{-k/2}\sum_{\g, \, \delta \in \G_\infty \backslash \G' \atop  \g  \delta^{-1} \neq \G_\ci}
\left| c_{\g  \delta^{-1}} \right|^{r-1} \Im(\g z)^{\sigma/2} \Im(\delta z)^{(k-\sigma)/2}.
\end{equation}
\begin{lemma}
There exists a constant $\kappa_\G >0$ so that for all $\g,\delta \in \G$ with $c_{\g  \delta^{-1}}>0$
$$
\kappa_\G \lqs c_{\g  \delta^{-1}} \lqs \Im (\g z)^{-1/2}\Im (\delta z)^{-1/2}.
$$
\end{lemma}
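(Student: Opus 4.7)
The plan is to prove the two inequalities separately.

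For the upper bound, I would set $\eta := \g\delta^{-1}\in\G$, so that $\eta(\delta z)=\g z$. The standard transformation rule $\Im(\eta w)=\Im(w)/|j(\eta,w)|^2$ applied with $w=\delta z$ gives
$$
|c_\eta (\delta z)+d_\eta|^{2} \;=\; \frac{\Im(\delta z)}{\Im(\g z)}.
$$
Since $c_\eta=c_{\g\delta^{-1}}>0$, the elementary estimate $|cw+d|\gqs c\,\Im(w)$ for $w\in\H$ and real $c>0$ (which follows from $|cw+d|^2=(cx+d)^2+(cy)^2\gqs (cy)^2$) then yields $c_\eta^{2}\,\Im(\delta z)^{2}\lqs \Im(\delta z)/\Im(\g z)$, which rearranges to the claimed upper bound $c_{\g\delta^{-1}}\lqs \Im(\g z)^{-1/2}\Im(\delta z)^{-1/2}$.

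For the lower bound, I would invoke a Shimizu-type discreteness argument. In the context of \eqref{dbleis4} the relevant group is $\G'=\sa^{-1}\G\sa$, which has $\infty$ as a cusp, so $\G'_\infty$ contains a parabolic generator $\left(\smallmatrix 1 & h \\ 0 & 1 \endsmallmatrix\right)$ for some cusp width $h>0$. Shimizu's lemma, a consequence of the discreteness of $\G'$ in $\SL(2,\R)$, then forces every $\eta\in\G'$ with $c_\eta\neq 0$ to satisfy $|c_\eta|\gqs 1/h$. Applied to $\eta=\g\delta^{-1}$, this produces the lower bound with $\kappa_\G:=1/h$.

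Neither step is a genuine obstacle: the upper inequality is essentially the identity $|j(\eta,\delta z)|^{2}=\Im(\delta z)/\Im(\g z)$ combined with a trivial pointwise estimate, while the lower one is the classical Shimizu bound for a cuspidal Fuchsian group. The only care needed is to ensure that $\infty$ really is a cusp of the group to which the bound is applied, which is automatic in the contexts of \S \ref{holeis} where $\G$ is assumed to contain at least one cusp and the lemma is used after conjugation by $\sa$.
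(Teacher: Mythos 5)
Your proof is correct and takes essentially the same route as the paper: for the lower bound the paper likewise just invokes discreteness, citing Iwaniec \S\S 2.5--2.6 and Shimura, Lemma 1.25, for the existence of $\kappa_\G$ (valid because $\infty$ is a cusp of the conjugated group, exactly the point you flag). For the upper bound the two arguments rest on the same identity: the paper expands $c_{\g\delta^{-1}}=c_\g j(\delta,z)-c_\delta j(\g,z)$ and extracts the factor $\left|\Im\bigl(\ee(\g,z)/\ee(\delta,z)\bigr)\right|\lqs 1$, while you reach the identical bound from $|j(\g\delta^{-1},\delta z)|^{2}=\Im(\delta z)/\Im(\g z)$ together with $|cw+d|\gqs c\,\Im(w)$ --- a purely cosmetic difference.
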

\begin{proof}
The existence of $\kappa_\G$ is described in \cite[\S\S 2.5, 2.6]{Iwsp} and \cite[Lemma 1.25]{S}. Set
$
\ee(\g,z):=j(\g,z)/|j(\g,z)| =e^{i \arg(j(\g, z))}
$.
It is easy to verify that, for all $\g$, $\delta \in  \G$ and $z \in \H$,
\begin{eqnarray}
 c_{\g \delta^{-1}} & =  & c_\g j(\delta,z) - c_\delta j(\g,z) \nonumber\\
 & = & \left(\frac{j(\g,z) - \overline{j(\g,z)}}{2iy} \right)j(\delta,z) - \left(\frac{j(\delta,z) - \overline{j(\delta,z)}}{2iy}\right) j(\g,z) \nonumber\\
   &=& \left(\ee(\delta,z)^{-2} - \ee(\g,z)^{-2}\right) j(\g,z) j(\delta,z)/(2i y). \nonumber
\end{eqnarray}
Therefore
\begin{eqnarray*}
 |c_{\g \delta^{-1}}| & = &  \left|\frac{\ee(\g,z)}{\ee(\delta,z)}-\frac{\ee(\delta,z)}{\ee(\g,z)}\right|
   \Im (\g z)^{-1/2}\Im (\delta z)^{-1/2}/2 \nonumber\\
&=&  \left|\Im\left(\frac{\ee(\g,z)}{\ee(\delta,z)}\right)\right| \Im(\g z)^{-1/2}\Im (\delta z)^{-1/2} \nonumber\\
& \leqslant & \Im (\g z)^{-1/2}\Im (\delta z)^{-1/2}. \label{cgd}
\end{eqnarray*}
\end{proof}
It follows that for $r'=\max(r,1)$ and $\g\delta^{-1} \not\in \G_\infty$
\begin{equation}\label{a}
\left| c_{\g  \delta^{-1}} \right|^{r-1} \ll \Im (\g z)^{(1-r')/2}\Im (\delta z)^{(1-r')/2}
\end{equation}
for an implied constant depending on $\G$ and $r$. Combining \eqref{abseis2}, \eqref{a} shows
\begin{multline}
  \frac{\ei_{s, k-s, \ca}(\sa z,w)}{j(\sa ,z)^{k}}   \ll  y^{-k/2} \sum_{\g, \, \delta \in \G_\infty \backslash \G' \atop \g\delta^{-1} \neq
\G_\infty} \Im(\g z)^{(1-r'+\sigma)/2} \Im(\delta z)^{(1-r'+k-\sigma)/2} \\
 = y^{-k/2}\left[E_\ca\left(\sa z,\frac{1-r'+\sigma}{2}\right) E_\ca\left(\sa z,\frac{1-r'+k-\sigma}{2}\right) - E_\ca\left(\sa z,1-r' + k/2\right) \right] \label{elf}
\end{multline}
on noting that $\Im(\g z)=\Im(\delta z)$ for $\g\delta^{-1} \in
\G_\infty$.
Since $E_{\ca}(z,s)$ is absolutely convergent for
$\sigma=\Re(s)>1$, we have proved that the series $
\ei_{s, k-s, \ca}(\sa z,w)$, defined in \eqref{dbleis4}, is absolutely
 convergent for  $2<\sigma<k-2$ and $\Re(w) < \sigma-1$, $k-1-\sigma$.
This
convergence is uniform for $z$ in compact sets of $\H$ and $s,w$ in compact sets in $\C$ satisfying the above constraints.

We next verify that $\ei_{s, k-s, \ca}(z,w)$ has weight $k$ in the $z$ variable. We have
$$
f(z) \in M_k(\G) \iff f(\sa z) j(\sa,z)^{-k} \in M_k(\sa^{-1}\G \sa)
$$
so with \eqref{dbleis4} we must prove that
$$
g(z):= \sum_{ \g, \, \delta \in B \backslash \G' \atop  c_{\g  \delta^{-1}}  >0}
\left( c_{\g  \delta^{-1}} \right)^{w-1}  \left( \frac{j(\g,z)}{j(\delta,z)} \right)^{-s} j(\delta,z)^{-k}
$$
is in $M_k(\G')$.
For all $\tau \in \G'$
\begin{eqnarray*}
 \frac{g(\tau z)}{j(\tau,z)^{k}} & =  &  \sum_{\g, \, \delta \in B \backslash \G' \atop  c_{\g  \delta^{-1}}  >0}
 \left( c_{\g  \delta^{-1}} \right)^{w-1}  \left( \frac{j(\g, \tau z)}{j(\delta, \tau z)} \right)^{-s} j(\delta, \tau z)^{-k} j(\tau, z)^{-k}\\
 & = &  \sum_{\g, \, \delta \in B \backslash \G' \atop  c_{(\g\tau)  (\delta \tau)^{-1}} >0}
 \left(c_{(\g\tau)  (\delta \tau)^{-1}} \right)^{w-1} \left( \frac{j(\g \tau,z)}{j(\delta \tau,z)} \right)^{-s} j(\delta \tau,z)^{-k} = g(z)
\end{eqnarray*}
as required.

We finally show that $\ei_{s,k-s}$ is a cusp form.
By \eqref{elf}, replacing $z$  by $\sa^{-1}\sb z$ and using \eqref{yuk}, for any cusp $\cb$ we obtain
\begin{multline*}
  \frac{\ei_{s, k-s, \ca}(\sb z,w)}{j(\sb ,z)^{k}} \\
   \ll  y^{-k/2}\left[E_\ca\left(\sb z,\frac{1-r'+\sigma}{2}\right) E_\ca\left(\sb z, \frac{1-r'+k-\sigma}{2}\right) - E_\ca\left(\sb z,1-r' + k/2\right) \right] \\
    \ll  y^{1+\sigma - k} + y^{1-\sigma} + y^{1+r'-k} + y^{r'-k}
\end{multline*}
and approaches $0$  as $y \to \infty$.
 Thus, by a standard argument (see for example \cite[Prop. 5.3]{DO2010}), $\ei_{s, k-s, \ca}(z,w)$ a cusp form.
Assembling these results, we have shown the following:

\begin{prop} \label{cgs}
Let $z \in \H$, $k \in \Z$ and let $s$, $w \in \C$ satisfy $2< \sigma <k-2$ and $\Re(w) < \sigma-1$, $k-1-\sigma$. For $\G$ a Fuchsian group of the first kind with cusp $\ca$, the series $\ei_{s, k-s,\ca}(z,w)$ is
absolutely and uniformly convergent for $s$, $w$ and $z$ in compact sets satisfying the above constraints. For each such $s$, $w$ we have $\ei_{s, k-s, \ca}(z,w)
\in S_{k}(\G)$ as a function of $z$.
\end{prop}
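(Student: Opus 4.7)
The plan is to establish the three claims of the proposition---absolute and uniform convergence, weight-$k$ transformation, and cuspidality---by dominating $\ei_{s,k-s,\ca}(z,w)$ termwise by products of non-holomorphic Eisenstein series $E_\ca(z,s)$, whose analytic behavior is already in hand.

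First I would handle absolute convergence. The three elementary inputs needed are the identity $\Im(\g z)=y|j(\g,z)|^{-2}$, the trivial estimate $|\rho^s|\ll_t|\rho|^\sigma$, and the key inequality $|c_{\g\delta^{-1}}|\leq \Im(\g z)^{-1/2}\Im(\delta z)^{-1/2}$, valid whenever $c_{\g\delta^{-1}}\neq 0$. Inserting these into \eqref{dbleis4} (after conjugating by $\sa$) and using $|c_{\g\delta^{-1}}|^{r-1}\ll \Im(\g z)^{(1-r')/2}\Im(\delta z)^{(1-r')/2}$ with $r'=\max(r,1)$, a factor of $y^{-k/2}$ pulls out and what remains is a double coset sum that, after separating the diagonal contribution $\g\delta^{-1}\in\G_\infty$, equals $E_\ca(\sa z,\alpha)E_\ca(\sa z,\beta)-E_\ca(\sa z,\alpha+\beta)$ with $\alpha=(1-r'+\sigma)/2$ and $\beta=(1-r'+k-\sigma)/2$. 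Both Eisenstein series converge absolutely precisely when $\Re\alpha,\Re\beta>1$, which rewrites as $2<\sigma<k-2$ together with $\Re(w)<\sigma-1,\,k-1-\sigma$. Uniformity on compacta follows from uniformity of the majorant.

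Next, the weight-$k$ transformation is a direct change of variable in the double coset sum: substituting $\g\mapsto\g\tau$, $\delta\mapsto\delta\tau$ for $\tau\in\G'$ preserves the sum, the cocycle relation $j(\g\tau,z)=j(\g,\tau z)j(\tau,z)$ supplies the required factor $j(\tau,z)^k$, and the determinantal invariance $c_{(\g\tau)(\delta\tau)^{-1}}=c_{\g\delta^{-1}}$ leaves the weight factor untouched. For cuspidality at a cusp $\cb$, I would specialize the absolute-convergence bound at $z\mapsto\sa^{-1}\sb z$ and insert the standard asymptotic \eqref{yuk} for $E_{k,\ca}^*(\sb z,s)$ at $\cb$; the resulting majorant decays like $y^{1+\sigma-k}+y^{1-\sigma}+y^{1+r'-k}+y^{r'-k}$, each exponent strictly negative on the allowed domain, so the standard criterion of \cite[Prop. 5.3]{DO2010} applies and yields membership in $S_k(\G)$.

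The main obstacle is the key inequality $|c_{\g\delta^{-1}}|\leq\Im(\g z)^{-1/2}\Im(\delta z)^{-1/2}$ that drives the entire termwise comparison. One obtains it by first writing $c_\g$ in terms of $j(\g,z)-\overline{j(\g,z)}=2ic_\g y$, then rewriting the determinantal identity $c_{\g\delta^{-1}}=c_\g j(\delta,z)-c_\delta j(\g,z)$ as $(\ee(\delta,z)^{-2}-\ee(\g,z)^{-2})j(\g,z)j(\delta,z)/(2iy)$ with $\ee(\g,z):=j(\g,z)/|j(\g,z)|$, after which the trivial bound $|\Im(\ee(\g,z)/\ee(\delta,z))|\leq 1$ finishes the derivation and reduces everything to classical facts about $E_\ca(z,s)$.
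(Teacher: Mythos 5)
Your argument is essentially the paper's own proof: the same termwise domination by $y^{-k/2}\bigl[E_\ca(\sa z,\alpha)E_\ca(\sa z,\beta)-E_\ca(\sa z,\alpha+\beta)\bigr]$ via the identity $c_{\g\delta^{-1}}=c_\g j(\delta,z)-c_\delta j(\g,z)$ and the bound $|c_{\g\delta^{-1}}|\lqs \Im(\g z)^{-1/2}\Im(\delta z)^{-1/2}$, the same change-of-variables check for the weight, and the same use of \eqref{yuk} for cuspidality. The one ingredient you omit is the uniform lower bound $c_{\g\delta^{-1}}\gqs\kappa_\G>0$ (from discreteness of $\G$): your step $|c_{\g\delta^{-1}}|^{r-1}\ll \Im(\g z)^{(1-r')/2}\Im(\delta z)^{(1-r')/2}$ requires it whenever $r=\Re(w)<1$, since then $r'=1$, the right side is $1$, and a negative power of $|c_{\g\delta^{-1}}|$ cannot be controlled by the upper bound alone; the allowed range $\Re(w)<\sigma-1,\,k-1-\sigma$ certainly includes such $w$, so this case must be covered. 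With that lemma added, the proof is complete and matches the paper's.
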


The same techniques prove the next result, for the non-holomorphic double Eisenstein series. Generalizing \eqref{dbleis2}, we set
\begin{equation}\label{dbleis2b}
    \de_\ca(\sa z,w;s,s'):=
\sum_{\g, \, \delta \in \G_\infty \backslash \sa^{-1}\G \sa \atop \g\delta^{-1} \neq
\G_\infty} \frac {\Im(\g z)^{s} \Im(\delta z)^{s'}}{
|c_{\g \delta^{-1}}|^w }.
\end{equation}

\begin{prop} \label{42}
Let  $z \in \H$, $s,s',w \in \C$ with $\sigma=\Re(s)$ and $\sigma'=\Re(s')$. The series $\de_\ca(z,w;s,s')$, defined in \eqref{dbleis2b} is
absolutely and uniformly convergent for  $z$,  $w$, $s$ and $s'$ in compact sets satisfying
$$
\sigma, \sigma'>1 \quad \text{ and } \quad\Re(w) >
2-2\sigma, 2-2\sigma'.
$$
\end{prop}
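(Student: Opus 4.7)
The argument mirrors the one running from \eqref{abseis} to \eqref{elf} for the holomorphic series. I would adapt it step by step, with the angular factors removed and the exponent on $c_{\g\delta^{-1}}$ replaced by $-w$.

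First, I take absolute values in the definition \eqref{dbleis2b}. Setting $r = \Re(w)$, the majorant is
$$\sum_{\g,\delta \in \G_\ci \backslash \sa^{-1}\G\sa,\ \g\delta^{-1} \neq \G_\ci} \Im(\g z)^\sigma \Im(\delta z)^{\sigma'} |c_{\g\delta^{-1}}|^{-r}.$$
The lemma established in the previous section provides both $|c_{\g\delta^{-1}}| \gqs \kappa_\G$ and $|c_{\g\delta^{-1}}| \lqs \Im(\g z)^{-1/2}\Im(\delta z)^{-1/2}$. Using the first bound when $r \gqs 0$ and the second when $r < 0$, a short case split gives the uniform estimate $|c_{\g\delta^{-1}}|^{-r} \ll_{\G,r} \Im(\g z)^{r_0/2}\Im(\delta z)^{r_0/2}$, where $r_0 := \min(r,0)$.

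Substituting and proceeding exactly as in the derivation of \eqref{elf}, I would rewrite the restricted sum as an unrestricted double sum minus the contribution from pairs with $\g\delta^{-1} \in \G_\ci$. Since $\Im(\g z) = \Im(\delta z)$ on that diagonal, the correction collapses to a single Eisenstein series and the majorant becomes a constant multiple of
$$E_\ca(\sa z, \sigma + r_0/2)\, E_\ca(\sa z, \sigma' + r_0/2) - E_\ca(\sa z, \sigma + \sigma' + r_0).$$
Each of these non-holomorphic Eisenstein series is absolutely and uniformly convergent on compacta provided its spectral parameter has real part greater than $1$. The binding constraints are $\sigma + r_0/2 > 1$ and $\sigma' + r_0/2 > 1$: if $r \gqs 0$ then $r_0 = 0$ and these reduce to $\sigma, \sigma' > 1$; if $r < 0$ then $r_0 = r$ and they become $\Re(w) > 2 - 2\sigma$ and $\Re(w) > 2 - 2\sigma'$. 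Both cases are captured by the stated region, in which $\sigma + \sigma' + r_0 > 2$ automatically, so the subtracted term also converges, establishing the claim.

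The only real subtlety, which I expect to be the main bookkeeping step, is the sign-split in estimating $|c_{\g\delta^{-1}}|^{-r}$; once that yields the clean bound with exponent $r_0/2$, the rest is a transcription of the holomorphic argument with no additional analytic obstacles.
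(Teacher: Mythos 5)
Your argument is correct and is exactly the one the paper intends: the printed proof of Proposition \ref{42} consists solely of the remark that ``the same techniques'' as in \S \ref{holeis} apply, meaning the lemma $\kappa_\G \lqs |c_{\g\delta^{-1}}| \lqs \Im(\g z)^{-1/2}\Im(\delta z)^{-1/2}$ followed by comparison with a product of non-holomorphic Eisenstein series minus the diagonal term, as in \eqref{elf}. Your sign-split yielding the exponent $r_0/2=\min(\Re(w),0)/2$ is the precise analogue of the paper's bound \eqref{a} with $r'=\max(r,1)$, and the resulting convergence region matches the one stated.
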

Unlike $\ei_{s,k-s,\ca}(z,w)$, the series $\de_\ca(z,w;s,s')$ may have polynomial growth at cusps.

\section{Further results on double Eisenstein series}
\subsection{Analytic Continuation} \label{conti}
\begin{proof}[Proof of Theorem \ref{maineis}]
Our next task is to prove the meromorphic continuation of $\ei_{s, k-s}(z,w)$ in $s$ and $w$.  For $s, w$ in
the initial domain of convergence, we begin with
\begin{eqnarray}
  \lefteqn{\zeta(1-w+s)  \zeta(1-w+k-s) \ei_{s, k-s}(z,w)} \quad \quad \quad \quad \nonumber\\
  &=& \sum_{u,v=1}^\infty u^{w-1-s}v^{w-1-k+s}\sum_{a,b,c,d \in \Z \atop{ (a,b)=(c,d)=1 \atop ad-bc > 0}} (ad-bc)^{w-1} \left(\frac{az+b}{c z+d}\right)^{-s} (cz+d)^{-k} \nonumber\\
   &=& \sum_{u,v=1}^\infty \sum_{a,b,c,d \in \Z \atop{ (a,b)=(c,d)=1 \atop ad-bc > 0}} (au \cdot dv-bu \cdot cv)^{w-1} \left(\frac{au \cdot z+bu}{cv \cdot z+dv}\right)^{-s} (cv \cdot z+dv)^{-k} \nonumber\\
   & = & \sum_{a,b,c,d \in \Z \atop ad-bc > 0} (ad-bc)^{w-1} \left(\frac{az+b}{cz+d}\right)^{-s} (cz+d)^{-k} \label{newdef}\\
   &=&   \sum_{n=1}^\infty \frac1{n^{1-w}} \sum_{(\smallmatrix a & b \\ c & d \endsmallmatrix) \in \mathcal M_n}  \left(\frac{az+b}{cz+d}\right)^{-s} (cz+d)^{-k} \nonumber \\
   & = & 2 \sum_{n=1}^\infty \frac{T_n \mathcal C_k(z,s)}{n^{k-w}}, \label{esks}
\end{eqnarray}
recalling \eqref{cohk}. With Proposition \ref{cgs}, we know $\ei_{s, k-s}(z,w) \in S_k(\G)$ so that
\begin{multline*}
  \ei_{s, k-s}(z,w) =\sum_{f \in \mathcal B_k} \frac{\s{\ei_{s, k-s}(\cdot,w)}{f}}{\s{f}{f}} f(z) \\
 \implies \quad
 \zeta(1-w+s)\zeta(1-w+k-s) \ei_{s, k-s}(z,w)  = 2 \sum_{n=1}^\infty \frac1{n^{k-w}} \sum_{f \in \mathcal B_k} \frac{\s{T_n\mathcal C_k(\cdot,s)}{f}}{\s{f}{f}} f(z).
\end{multline*}
Then
$$
  \s{T_n\mathcal C_k(z,s)}{f} = \s{\mathcal C_k(z,s)}{T_nf}
   = a_f(n)\s{\mathcal C_k(z,s)}{f}
$$
and with \eqref{innerc}
we obtain
\begin{multline} \label{key}
\zeta(1-w+s)\zeta(1-w+k-s) \ei_{s, k-s}(z,w) =   2^{3-w}\pi^{k+1-w} e^{-s i \pi/2} \frac{ \G(k-1)}{\G(s) \G(k-s) \G(k-w)}\\
 \times \sum_{f \in \mathcal B_k} L^*(f,k-s) L^*(f,k-w) \frac{f(z)}{\s{f}{f}}.
\end{multline}
Define the completed double Eisenstein series $\ei^*$  with \eqref{complete}. Then \eqref{key} becomes
\begin{equation}\label{key2}
\ei^*_{s, k-s}(z,w) =    \sum_{f \in \mathcal B_k} L^*(f,s) L^*(f,w) \frac{f(z)}{\s{f}{f}}.
\end{equation}
 We also now see from \eqref{key2} that $\ei^*_{s, k-s}(z,w)$ has a analytic continuation to all $s,w$ in $\C$ and satisfies \eqref{lslw} and the two functional equations \eqref{ll2}, \eqref{ll1}. The dihedral group $D_8$ generated by \eqref{ll2}, \eqref{ll1} is described in \cite[\S 4.4]{DO2010}.
\end{proof}

\subsection{Twisted double Eisenstein series}
In this section, we  define  the {\em twisted double Eisenstein series} by
\begin{equation}\label{zpqeis}
\zeta(1-w+s)\zeta(1-w+k-s)\ei_{s, k-s}(z,w;p/q):=  \sum_{a,b,c,d \in \Z \atop ad-bc > 0} (ad-bc)^{w-1} \left(\frac{az+b}{cz+d}+\frac{p}{q}\right)^{-s} (cz+d)^{-k}
\end{equation}
for $ p/q \in \Q$ with $q>0$ and establish its basic required properties. We remark that the above definition of $\ei_{s, k-s}(z,w;p/q)$ comes from generalizing \eqref{newdef}, but it is not clear how it can be extended to general Fuchsian groups.

Writing
$$
(ad-bc)^{w-1} \left(\frac{az+b}{cz+d} +\frac{p}{q}\right)^{-s} = q^{1-w+s}\big((aq+cp)d-(bq+dp)c\big)^{w-1} \left(\frac{(aq+cp)z+(bq+dp)}{cz+d}\right)^{-s}
$$
we see that
$$
\eqref{zpqeis} = q^{1-w+s} \sum_{a',b',c,d \in \Z \atop a'd-b'c > 0} (a'd-b'c)^{w-1} \left(\frac{a'z+b'}{cz+d} \right)^{-s} (cz+d)^{-k}
$$
with $a' \equiv cp \mod q$ and $b' \equiv dp \mod q$. Hence $\ei_{s, k-s}(z,w;p/q)$ is a subseries of $\ei_{s, k-s}(z,w)$ and, in the same domain of initial convergence, is an element of $S_k$.

The analog of \eqref{esks} is
\begin{equation}\label{esks2}
\zeta(1-w+s)\zeta(1-w+k-s)\ei_{s, k-s}(z,w;p/q) = 2 \sum_{n=1}^\infty \frac{T_n \mathcal C_k(z,s;p/q)}{n^{k-w}}.
\end{equation}
Hence, with \eqref{innerc},
\begin{multline} \label{key3}
\zeta(1-w+s)\zeta(1-w+k-s)\ei_{s, k-s}(z,w;p/q) \\=   2^{3-w}\pi^{k+1-w} e^{-s i \pi/2} \frac{ \G(k-1)}{\G(s) \G(k-s) \G(k-w)}
  \sum_{f \in \mathcal B_k} L^*(f,k-s;p/q) L^*(f,k-w) \frac{f(z)}{\s{f}{f}}.
\end{multline}
Define the completed double Eisenstein series $\ei^*_{s, k-s}(z,w;p/q)$  with the same factor as \eqref{complete} and we obtain
\begin{equation}\label{epq}
\s{\ei^*_{s, k-s}(\cdot,w;p/q)}{f} =  L^*(f,k-s;p/q) L^*(f,k-w)
\end{equation}
for any  $f$ in $\mathcal B_k$. Then (\ref{key3}) implies $\ei^*_{s, k-s}(z,w;p/q)$ has an analytic continuation to all $s,w$ in $\C$. It satisfies the two functional equations:
\begin{eqnarray*}
  \ei^*_{s, k-s}(z,k-w; p/q) &=& (-1)^{k/2} \ei^*_{s, k-s}(z,w; p/q),\\
  q^s\ei^*_{k-s, s}(z,w; p/q) &=& (-1)^{k/2} q^{k-s} \ei^*_{s, k-s}(z,w; -p'/q)
\end{eqnarray*}
for $pp'\equiv 1 \mod q$ using \eqref{lfe} and \eqref{ww}, respectively.

\section{Applying the Rankin-Cohen bracket  to Poincar\'e series}\label{rcbr}
The main objective of this section is to show how double Eisenstein series arise naturally when
the Rankin-Cohen bracket is applied to the usual Eisenstein series $E_k$. Proposition \ref{th2}  will be a consequence of this.  In fact, since there is no difficulty in extending these methods, we compute the Rankin-Cohen bracket of two arbitrary Poincar\'e series
$$
\left[P_{k_1}(z;m_1),P_{k_2}(z;m_2)\right]_n
$$
for $m_1,m_2 \gqs 0$. The result may be expressed in terms of the {\em  double Poincar\'e series}, defined below. In this way, the action of the Rankin-Cohen brackets on spaces of modular forms can be completely described. See also Corollary \ref{hjk} at the end of this section.

\begin{defin} Let $z \in \H$,  $k_1$, $k_2 \geqslant 3$ in
$ \Z$ and $m_1$, $m_2 \in \Z_{\geqslant 0}$. For $w \in \C$ with $\Re(w) <
k_1-1, k_2-1$,  we define the {\em  double Poincar\'e series}
\begin{equation}\label{dblpoin}
    \po_{k_1, k_2}(z,w;m_1,m_2):=\sum_{\g, \, \delta \in B
\backslash \G \atop c_{\g\delta^{-1}} >0 } \left(c_{\g\delta^{-1}}\right)^{w-1} \frac{e^{2\pi i (m_1 \g z + m_2 \delta z)}
}{ j(\g,z)^{k_1} j(\delta,z)^{k_2}}.
\end{equation}
\end{defin}
The series \eqref{dblpoin} will vanish identically unless
$k_1$ and $k_2$  have the same parity.
Clearly we have
$
\ei_{k_1, k_2}(z,w)=\po_{k_1, k_2}(z,w;0,0)
$.
Since $|e^{2\pi i (m_1 \g z + m_2 \delta z)}|\lqs 1$, it is a simple matter to verify that the work in \S \ref{holeis} proves that $\po_{k_1, k_2}(z,w;m_1,m_2)$ converges absolutely and uniformly on compacta to a cusp form in $S_{k_1 + k_2}(\G)$.

For $l \in \Z_{\gqs 0}$ it is convenient to set
\begin{equation}\label{qj}
Q_k(z,l;m):=\begin{cases} P_k(z;m) & \text{ if \ } l=0,\\
\frac 12\sum_{\g \in B \backslash \G} \frac{e^{2\pi i m \g z}
 \left(c_{\g }\right)^l}{
j(\g,z)^{k+l}}& \text{ if \ } l \gqs 1.
\end{cases}
\end{equation}

As in the proof of Proposition \ref{cgs}, $Q_k$ is an absolutely convergent series for
$k$ even and at least $4$. The next result may be
verified by induction.
\begin{lemma}\label{lem62}
For every $j \in \Z_{\gqs 0}$, we have the formulas
\begin{eqnarray*}
  \frac{d^j}{dz^j} E_k(z) &=&  (-1)^j
\frac{(k+j-1)!}{(k-1)!}Q_{k}(z,j;0), \\
  \frac{d^j}{dz^j} P_k(z;m) &=& \sum_{l=0}^j (-1)^{l+j} (2 \pi i m)^l
\frac{j!}{l!} \binom{k+j-1}{k+l-1} Q_{k+2l}(z,j-l;m) \qquad (m>0).
\end{eqnarray*}
\end{lemma}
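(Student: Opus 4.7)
The plan is to verify both identities by termwise differentiation of the defining series, using only that $\frac{d}{dz}(\g z) = j(\g,z)^{-2}$ and $\frac{d}{dz} j(\g,z) = c_\g$. Local uniform convergence on $\H$ of the relevant Poincar\'e-type series (by the same sort of estimates as in \S\ref{holeis}) justifies differentiating inside the sums.

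For the first formula, a routine induction on $j$ gives
\[
\frac{d^j}{dz^j}(c_\g z + d_\g)^{-k} = (-1)^j\,\frac{(k+j-1)!}{(k-1)!}\,c_\g^{\,j}\,(c_\g z + d_\g)^{-(k+j)},
\]
and summing over $\g \in B\backslash\G$ (with the factor $1/2$) yields the claim. The terms with $c_\g = 0$ contribute nothing for $j \gqs 1$, while for $j = 0$ one simply recovers $E_k(z) = P_k(z;0) = Q_k(z,0;0)$.

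For the second formula I would induct on $j$, with trivial base case $j=0$ since $P_k(z;m) = Q_k(z,0;m)$. The engine of the induction is the one-step derivative
\[
\frac{d}{dz}Q_k(z,l;m) = (2\pi i m)\,Q_{k+2}(z,l;m) - (k+l)\,Q_k(z,l+1;m),
\]
obtained by applying the product rule termwise to $e^{2\pi i m \g z}\,c_\g^{\,l}\,j(\g,z)^{-(k+l)}$. Applied to each summand $(2\pi i m)^l Q_{k+2l}(z,j-l;m)$ of the inductive hypothesis, this produces two subsums: one that increases the $Q$-index by the $-(k+l)$ term, and one that shifts the weight from $k+2l$ to $k+2(l+1)$ and carries an extra factor of $2\pi i m$. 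After reindexing $l \mapsto l+1$ in the second subsum and collecting by the new outer index, matching coefficients against the target formula for $j+1$ reduces the problem to a binomial identity.

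The main obstacle is verifying this identity: for $1 \lqs l \lqs j$,
\[
l\binom{k+j-1}{k+l-2} + (k+j+l)\binom{k+j-1}{k+l-1} = (j+1)\binom{k+j}{k+l-1},
\]
together with the endpoint cases $l = 0$ and $l = j+1$, where only one of the two subsums contributes and the checks are immediate. Clearing factorials reduces both sides of the interior identity to $(j+1)(k+j)!/\bigl[(k+l-1)!\,(j-l+1)!\bigr]$, so the induction closes. The sign bookkeeping is automatic: the factor $(-1)^{l+j}$ from the hypothesis becomes $(-1)^{l+j+1}$ in the $Q_k(z,l+1;m)$ subsum, and after the reindex $l \mapsto l+1$ in the other subsum the sign $(-1)^{l+j}$ also reads as $(-1)^{l+(j+1)}$, matching the target.
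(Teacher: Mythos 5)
Your proof is correct and matches the paper's approach: the paper simply states that the lemma ``may be verified by induction,'' and your argument supplies exactly that induction, with the one-step recursion $\frac{d}{dz}Q_k(z,l;m) = (2\pi i m)Q_{k+2}(z,l;m) - (k+l)Q_k(z,l+1;m)$ and the binomial identity $l\binom{k+j-1}{k+l-2} + (k+j+l)\binom{k+j-1}{k+l-1} = (j+1)\binom{k+j}{k+l-1}$ both checking out. Nothing to add.
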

Set
$$
A_{k_1,k_2}(l,u)_n:= \frac{(k_1+n-1)! (k_2+n-1)!}{l! u! (n-l-u)!
(k_1+l-1)! (k_2+u-1)!}.
$$

\begin{prop}\label{rc}
For $m_1,m_2 \in \Z_{\gqs 1}$ 
\begin{multline*}
\left[P_{k_1}(z;m_1),P_{k_2}(z;m_2)\right]_n \\
= \sum_{ l,u \geqslant 0 \atop l+u \leqslant n}
 A_{k_1,k_2}(l,u)_n (-2 \pi i m_1)^l (2 \pi i m_2)^u
 \po_{k_1+n+l-u,k_2+n-l+u}(z,n+1-l-u;m_1,m_2)/2\\
 + P_{k_1+k_2+2n}(z;m_1+m_2) \sum_{ l,u \geqslant 0 \atop l+u = n}
 A_{k_1,k_2}(l,u)_n (-2 \pi i m_1)^l (2 \pi i m_2)^u .
\end{multline*}
\end{prop}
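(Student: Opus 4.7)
The plan is to expand the two derivatives in the Rankin-Cohen bracket via Lemma \ref{lem62}, reorganize by powers of $m_1$ and $m_2$, and then recognize the resulting inner sums as double Poincar\'e series by means of the identity $c_{\g\delta^{-1}}=c_\g j(\delta,z)-c_\delta j(\g,z)$ established in the proof of the lemma in Section \ref{holeis}.

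First, I substitute the derivative formulas from Lemma \ref{lem62} into the definition of $[P_{k_1},P_{k_2}]_n$. This produces a triple sum indexed by $(r,a,b)$ with $0\le a\le r$ and $0\le b\le n-r$. I then reindex via $l=a$, $u=b$, $\alpha=r-a$, $\beta=n-r-b$, so that $l+u+\alpha+\beta=n$ and the $Q$-factors become $Q_{k_1+2l}(z,\alpha;m_1)Q_{k_2+2u}(z,\beta;m_2)$. A direct manipulation of factorials shows the combinatorial coefficient depends only on $(l,u,\alpha,\beta)$ and equals $A_{k_1,k_2}(l,u)_n\binom{n-l-u}{\alpha}$; the key point is that $r!=(l+\alpha)!$ from one side and $(n-r)!=(u+\beta)!$ from the other cancel cleanly in the product. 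The three sign contributions combine to $(-1)^{r+a+b+n}=(-1)^{l+\beta}$, so
$$
[P_{k_1},P_{k_2}]_n=\sum_{l+u\le n}A_{k_1,k_2}(l,u)_n(-2\pi i m_1)^l(2\pi i m_2)^u\, X_{l,u}(z),
$$
where $X_{l,u}(z):=\sum_{\alpha+\beta=N}\binom{N}{\alpha}(-1)^\beta Q_{k_1+2l}(z,\alpha;m_1)Q_{k_2+2u}(z,\beta;m_2)$ and $N:=n-l-u$.

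Next, I unfold $X_{l,u}$ using the definition \eqref{qj}. The inner $\alpha$-sum collapses by the binomial theorem, since $\sum_\alpha\binom{N}{\alpha}(-1)^\beta\bigl(c_\g/j(\g,z)\bigr)^\alpha\bigl(c_\delta/j(\delta,z)\bigr)^\beta=\bigl(c_\g/j(\g,z)-c_\delta/j(\delta,z)\bigr)^N$, and clearing denominators produces $(c_\g j(\delta,z)-c_\delta j(\g,z))^N=c_{\g\delta^{-1}}^N$. Hence
$$
X_{l,u}(z)=\frac{1}{4}\sum_{\g,\delta\in B\backslash\G}\frac{c_{\g\delta^{-1}}^N\,e^{2\pi i(m_1\g z+m_2\delta z)}}{j(\g,z)^{k_1+n+l-u}\,j(\delta,z)^{k_2+n-l+u}}.
$$
For $N\ge 1$ the contribution from $c_{\g\delta^{-1}}=0$ vanishes, and the two off-diagonal ranges $c_{\g\delta^{-1}}>0$ and $c_{\g\delta^{-1}}<0$ contribute equally via the involution $\delta\mapsto-\delta$ on $B\backslash\G$, which preserves the summand because $k_2$ is even. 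This identifies $X_{l,u}=\frac{1}{2}\po_{k_1+n+l-u,\,k_2+n-l+u}(z,n+1-l-u;m_1,m_2)$. When $N=0$ (so $l+u=n$), $X_{l,u}=P_{k_1+2l}(z;m_1)P_{k_2+2u}(z;m_2)$, and the same diagonal/off-diagonal split now produces an extra term $P_{k_1+k_2+2n}(z;m_1+m_2)$ in addition to the $\po$ term with $w=1$, using that $k_1+2l=k_1+n+l-u$ precisely when $l+u=n$. Assembling both cases reproduces the proposition.

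The main obstacle is the bookkeeping in the first step: tracking the three sign contributions through the fourfold reindexing and witnessing the cancellation that removes $r$-dependence from the combinatorial coefficient. The conceptual surprise is that the sign $(-1)^\beta$ falling out of the Rankin-Cohen expansion is exactly the one needed for the binomial theorem to reassemble $c_{\g\delta^{-1}}^N$; this is what causes the diagonal cusp form $P_{k_1+k_2+2n}(z;m_1+m_2)$ to appear only for the boundary stratum $l+u=n$.
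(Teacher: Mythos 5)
Your proposal is correct and follows essentially the same route as the paper: substitute the derivative formulas of Lemma \ref{lem62} into the bracket, check that the combinatorial coefficient collapses to $A_{k_1,k_2}(l,u)_n\binom{n-l-u}{\alpha}$ with sign $(-1)^{l+\beta}$, apply the binomial theorem to reassemble $\bigl(c_\g j(\delta,z)-c_\delta j(\g,z)\bigr)^{N}=c_{\g\delta^{-1}}^{N}$, and split off the diagonal $c_{\g\delta^{-1}}=0$ contribution, which survives only when $N=0$ and yields $P_{k_1+k_2+2n}(z;m_1+m_2)$. Your treatment is in fact slightly more explicit than the paper's at the step identifying the full off-diagonal sum with twice the $c_{\g\delta^{-1}}>0$ range via the involution $\delta\mapsto-\delta$ (using that $k_2$ is even), which the paper leaves implicit.
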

\begin{proof}
With Lemma \ref{lem62}
\begin{multline}\label{first}
\left[P_{k_1}(z;m_1),P_{k_2}(z;m_2)\right]_n \\
= \sum_{l=0}^n \sum_{u=0}^n  (2 \pi i m_1)^l (2 \pi i m_2)^u \frac{(k_1+n-1)! (k_2+n-1)!}{l! u! (k_1+l-1)! (k_2+u-1)!}\\
\times \sum_{r=l}^{n-u} (-1)^{n+l+u+r} \frac{Q_{k_1+2l}(z,r-l;m_1) Q_{k_2+2u}(z,n-r-u;m_2)}{(r-l)!(n-r-u)!}.
\end{multline}
The inner sum over $r$ is
\begin{multline} \label{meq}
\frac{(-1)^l}{4(n-l-u)!}\sum_{\g, \delta \in B \backslash \G}
\frac{e^{2\pi i (m_1 \g z + m_2 \delta z)} }{
j(\g,z)^{k_1+2l}
j(\delta,z)^{k_2+2u}}\\
\times \sum_{r=l}^{n-u}  \binom{n-l-u}{r-l} \left( \frac{c_{\g}}{j(\g,z)}\right)^{r-l} \left( \frac{-c_{\delta}}{j(\delta,z)}\right)^{n-r-u}
\end{multline}
and, employing the binomial theorem, (\ref{meq}) reduces to
\begin{equation}\label{meq2}
\frac{(-1)^l}{4(n-l-u)!}
\sum_{\g, \delta \in B \backslash \G}
\frac{e^{2\pi i (m_1 \g z + m_2 \delta z)} }
{
j(\g,z)^{k_1+n+l-u}
j(\delta,z)^{k_2+n-l+u}}
\big(c_\g j(\delta,z) - c_{\delta}j(\g,z)\big)^{n-l-u}
\end{equation}
for $l+u<n$ and
\begin{equation}\label{meq3}
\frac{(-1)^l}{4(n-l-u)!}\sum_{\g, \delta \in B \backslash \G}
\frac{e^{2\pi i (m_1 \g z + m_2 \delta z)} }{
j(\g,z)^{k_1+n+l-u}
j(\delta,z)^{k_2+n-l+u}}
\end{equation}
for $l+u=n$.
Noting that $c_\g j(\delta,z) - c_{\delta}j(\g,z)=\left| \smallmatrix c_\g & d_\g \\ c_\delta & d_\delta \endsmallmatrix \right| = c_{\g \delta^{-1}}$ means that (\ref{meq2}) becomes
\begin{equation}\label{meq4}
    \frac{(-1)^{l}}{2(n-l-u)!} \po_{k_1+n+l-u,k_2+n-l+u}(z,n+1-l-u;m_1,m_2)
\end{equation}
and (\ref{meq3}) equals
\begin{equation}\label{meq5}
\frac{(-1)^{l}}{(n-l-u)!}\left(\frac{\po_{k_1+n+l-u,k_2+n-l+u}(z,n+1-l-u;m_1,m_2)}{2}
+P_{k_1+k_2+2n}(z;m_1+m_2)
\right).
\end{equation}
Putting (\ref{meq4}) and (\ref{meq5}) into (\ref{first}) finishes the proof.
\end{proof}

In fact, Proposition \ref{rc} is also valid for $m_1$ or $m_2$ equalling
$0$ provided we agree that $(-2 \pi i m_1)^l=1$ in the ambiguous case
where $m_1=l=0$ and similarly that $(2 \pi i m_2)^u=1$ when $m_2=u=0$.
With this notational convention the proof of the last proposition gives
\begin{cor} \label{poi2} For $m >0$ we have
\begin{eqnarray}
\left[E_{k_1}(z),P_{k_2}(z;m)\right]_n
& = & \sum_{u = 0}^{n}
 A_{k_1,k_2}(0,u)_n  (2 \pi i m)^u
 \po_{k_1+n-u,k_2+n+u}(z,n+1-u;0,m)/2 \nonumber\\
& & \quad \quad \quad + P_{k_1+k_2+2n}(z;m) \cdot
 A_{k_1,k_2}(0,n)_n  (2 \pi i m)^n, \nonumber\\
\left[E_{k_1}(z),E_{k_2}(z)\right]_n
& = & A_{k_1,k_2}(0,0)_n
 \ei_{k_1+n,k_2+n}(z,n+1)/2
 + E_{k_1+k_2}(z) \cdot \delta_{n,0}. \label{eisen}
\end{eqnarray}
\end{cor}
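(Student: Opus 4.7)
The plan is to derive both identities as direct specializations of Proposition \ref{rc}, using the notational convention stated just before the corollary: $(-2\pi i m_1)^l = 1$ when $m_1 = l = 0$, and likewise $(2\pi i m_2)^u = 1$ when $m_2 = u = 0$. With this convention, a factor $m_i^j$ with $j\gqs 1$ still vanishes identically whenever $m_i = 0$, while the $j = 0$ factor is fixed to $1$; this automatically prunes the double sums of Proposition \ref{rc} to the claimed slices. One should first check that the proof of Proposition \ref{rc} actually goes through in this degenerate setting, and this is straightforward: Lemma \ref{lem62} gives the derivatives of $E_k$ separately, and the formal manipulation of the inner $r$-sum via the binomial identity used to pass from \eqref{meq} to \eqref{meq2}--\eqref{meq3} does not see the value of $m_1$ or $m_2$ at all.

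Granting this, for the first identity I set $m_1 = 0$ and $m_2 = m > 0$ in Proposition \ref{rc}. The factor $(-2\pi i m_1)^l$ kills every term with $l \gqs 1$ and equals $1$ when $l = 0$, so the double sum over $l,u \gqs 0$ with $l+u \lqs n$ collapses to the single slice $l = 0$, $0 \lqs u \lqs n$, whose general term is $A_{k_1,k_2}(0,u)_n (2\pi i m)^u \po_{k_1+n-u,k_2+n+u}(z, n+1-u; 0, m)/2$; this is the first sum in the corollary. In the Poincar\'e piece, restricted to $l+u = n$, only $l = 0$, $u = n$ remains, producing $A_{k_1,k_2}(0,n)_n (2\pi i m)^n P_{k_1+k_2+2n}(z; m)$ since $m_1 + m_2 = m$.

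For the second identity, set $m_1 = m_2 = 0$. Now both $(-2\pi i m_1)^l$ and $(2\pi i m_2)^u$ vanish for any positive exponent, so only $l = u = 0$ contributes in each sum. The main sum reduces to the single term $A_{k_1,k_2}(0,0)_n \ei_{k_1+n,k_2+n}(z, n+1)/2$, using the observation $\ei_{k_1,k_2}(z,w) = \po_{k_1,k_2}(z,w;0,0)$ recorded after \eqref{dblpoin}. The residual Poincar\'e term survives only when $l + u = n$ is compatible with $l = u = 0$, i.e.\ when $n = 0$, and then contributes $P_{k_1+k_2}(z; 0) = E_{k_1+k_2}(z)$ weighted by $A_{k_1,k_2}(0,0)_0 = 1$, giving the $E_{k_1+k_2}(z)\cdot \delta_{n,0}$ term. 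There is no analytic obstacle to overcome; the only care needed is the consistent application of the convention on vanishing $m_i$, which is exactly what the remark preceding the corollary ensures.
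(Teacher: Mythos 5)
Your proposal is correct and is essentially the paper's own argument: the authors likewise note that Proposition \ref{rc} remains valid for $m_1$ or $m_2$ equal to $0$ under the stated convention, and obtain the corollary by specializing to $(m_1,m_2)=(0,m)$ and $(0,0)$, exactly as you do. Your additional checks (that Lemma \ref{lem62} and the binomial manipulation are insensitive to the vanishing of $m_i$, and that $A_{k_1,k_2}(0,0)_0=1$) are correct and only make explicit what the paper leaves implicit.
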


Proposition \ref{th2} follows directly from \eqref{eisen}. 
Combining  Proposition \ref{th2}, with  Theorem \ref{maineis} gives a new proof of
Zagier's formula \eqref{zag}. His original proof in \cite[Prop. 6]{Za3} employed Poincar\'e series.

\begin{proof}[Proof of Proposition \ref{k1k2}.]
Let $F_{s,w}(z)=(-1)^{k_2/2} y^{-k/2}E^*_{k_1}(z,  u)E^*_{k_2}(z,  v)/(2\pi^{k/2})$ with $u=(s+w-k+1)/2$, $v=(-s+w+1)/2$ as before in \eqref{uv}. Then $F_{s,w}(z)$ has weight $k$ and polynomial growth as $y \to \ci$. It is proved in \cite[Prop. 2.1]{DO2010} that
\begin{equation}\label{tyu}
\s{F_{s,w}}{f} =  L^*(f,s) L^*(f,w)
\end{equation}
for all $f \in B_k$.
Comparing \eqref{tyu} with \eqref{lslw} shows that $\ei^*_{s, k-s}(\cdot,w) = \pi_{hol}(F_{s,w})$ as required.
\end{proof}

A basic property of Rankin-Cohen brackets  also naturally emerges from Proposition \ref{rc} and Corollary \ref{poi2}:

\begin{cor} \label{hjk}
For $g_1\in M_{k_1}(\G)$ and $g_2 \in M_{k_2}(\G)$ we have
  $[g_1,g_2]_n \in S_{k_1+k_2+2n}(\G)$ for $n>0$.
\end{cor}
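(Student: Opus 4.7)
The plan is to reduce to generators and then invoke the explicit formulas already computed in Section \ref{rcbr}. First I would use the fact that $M_{k_i}(\G)$ is spanned by the Eisenstein series $E_{k_i}$ together with the holomorphic Poincaré series $P_{k_i}(\cdot;m)$ for $m \geq 1$. Since the Rankin-Cohen bracket $[\cdot,\cdot]_n$ is bilinear in its arguments, it suffices to establish $[g_1,g_2]_n \in S_{k_1+k_2+2n}(\G)$ when each of $g_1,g_2$ is taken from this generating set.

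Next I would appeal directly to Proposition \ref{rc} and Corollary \ref{poi2}. These express every such bracket as a finite linear combination of two kinds of terms: double Poincaré series $\po_{k_1+n+l-u,\,k_2+n-l+u}(z,n+1-l-u;m_1,m_2)$ for $l,u \geq 0$ with $l+u \leq n$, and a single Poincaré series $P_{k_1+k_2+2n}(z;m_1+m_2)$ coming from the $l+u=n$ contribution. In the purely Eisenstein case \eqref{eisen} the latter summand degenerates to $E_{k_1+k_2}(z)\cdot\delta_{n,0}$.

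Each double Poincaré series $\po_{\cdot,\cdot}(z,w;m_1,m_2)$ is a cusp form by the remark immediately following \eqref{dblpoin}, which is a minor adaptation of Proposition \ref{cgs} using $|e^{2\pi i(m_1 \g z + m_2 \delta z)}| \leq 1$. The single Poincaré series $P_{k_1+k_2+2n}(\cdot;m_1+m_2)$ is itself a cusp form whenever $m_1+m_2 \geq 1$, which covers every case except $g_1=E_{k_1}$, $g_2=E_{k_2}$. In that remaining case, the only potentially non-cuspidal contribution in \eqref{eisen} is the summand $E_{k_1+k_2}(z)\cdot\delta_{n,0}$, which vanishes precisely because $n>0$. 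Hence $[g_1,g_2]_n$ is a finite sum of cusp forms in every case, proving the corollary.

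The main obstacle is bookkeeping rather than substance: one must verify that each double Poincaré series appearing in the decomposition satisfies the convergence condition $\Re(w) < k_1-1,\,k_2-1$ of \eqref{dblpoin}, which reduces to $k_1+2l > 2$ and $k_2+2u > 2$ and holds automatically for the relevant weights. The conceptual punch line is that the positivity of $n$ is exactly what eliminates the lone non-cuspidal Eisenstein contribution, which explains why $n>0$ is the correct hypothesis.
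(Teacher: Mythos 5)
Your proposal is correct and follows essentially the same route as the paper: decompose $g_1,g_2$ into Eisenstein and Poincar\'e series, apply Proposition \ref{rc} and Corollary \ref{poi2}, and observe that every resulting term is cuspidal once $n>0$ kills the $E_{k_1+k_2}\cdot\delta_{n,0}$ summand. Your treatment is in fact slightly more careful than the paper's, which glosses over the single Poincar\'e series $P_{k_1+k_2+2n}(z;m_1+m_2)$ terms that you correctly note are cusp forms for $m_1+m_2\gqs 1$.
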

\begin{proof}
The space $M_{k_1}(\G)$ is spanned by $E_{k_1}$ and the Poincar\'e series $P_{k_1}(z;m)$ for $m \in \Z_{\gqs 1}$. So we may write $g_1$, and similarly $g_2$, as a linear combination of Eisenstein and Poincar\'e series. Hence $[g_1,g_2]_n$ is a linear combination of the Rankin-Cohen brackets appearing in Proposition \ref{rc} and Corollary \ref{poi2}. By these results $[g_1,g_2]_n$ is a linear combination of double Poincar\'e and double Eisenstein series which  are in $S_{k_1+k_2+2n}(\G)$, as we have already shown.
\end{proof}

It would be interesting to know if $\po_{k_1, k_2}(z,w;m_1,m_2)$ has a meromorphic continuation in $w$. As a corollary of work in the next section we establish the continuation of $\po_{k_1, k_2}(z,w;0,0)$ to all $w \in \C$.

\section{The Hecke action} \label{ax}
The expression  \eqref{esks}, giving $\ei_{s, k-s}$  in terms of $\mathcal C_k$ acted
upon by the Hecke operators,  can be studied further and
yields an interesting relation between $\ei_{s, k-s}(z,w)$ and the generalized Cohen kernel $\coh_k(z,s;p/q)$.

We have
\begin{eqnarray*}
    T_n \mathcal C_k(z,s;p/q) &=&  n^{k-1} \sum_{\rho \in \G \backslash \mathcal M_n} \mathcal C_k(\rho z,s;p/q) \cdot j(\rho, z)^{-k} \\
   &=& \frac 12 n^{k-1} \sum_{\g \in \mathcal M_n} \left(\g z + \frac pq \right)^{-s}  j(\g,  z)^{-k}.
\end{eqnarray*}
To decompose $\mathcal M_n$ into left $\G$-cosets, set $\mathcal H:=\left\{ \left( \smallmatrix a & b \\ 0 & d \endsmallmatrix \right) \ \Bigl| \ a,b,d \in \Z_{\gqs 0}, \ ad=n, \ 0 \lqs b < a\right\}$ so that  $\mathcal M_n = \bigcup_{\rho \in \mathcal H} \rho \G$, a disjoint union.
Hence
\begin{eqnarray}
  T_n \mathcal C_k(z,s;p/q) &=& \frac 12 n^{k-1} \sum_{\rho \in \mathcal H} \sum_{\g \in \G} \left( \rho \g z + \frac pq \right)^{-s} j(\rho, \g z)^{-k} j(\g, z)^{-k} \nonumber\\
&=& \frac 12 n^{k-1} \sum_{a | n} \left(\frac na \right)^{-k} \left(\frac {a^2}n \right)^{-s}\sum_{0 \leqslant b <a}
\sum_{\g \in \G} \left(\g z + \frac{b}{a} + \frac{n}{a^2} \frac pq\right)^{-s} j( \g ,  z)^{-k} \nonumber\\
&=& n^{s-1} \sum_{a | n} a^{k-2s} \sum_{0 \leqslant b <a}
 \coh_k\left(z,s; \frac{b}{a} + \frac{n}{a^2} \frac pq\right). \label{scoh}
\end{eqnarray}
Combining \eqref{scoh} in the case $p/q=0$, with \eqref{esks} we find
\begin{eqnarray*}
\lefteqn{\frac{\zeta(1-w+s)\zeta(1-w+k-s) \ei_{s, k-s}(z,w)}2 = \sum_{n=1}^\infty \frac{T_n \coh_k(z,s)}{n^{k-w}} } \quad \quad \quad \quad \quad \quad \quad \quad  \\
   &=& \sum_{n=1}^\infty  n^{s+w-k-1} \sum_{a|n} a^{k-2s} \sum_{0 \leqslant b <a}
\coh_k\left(z,s;\frac{b}{a}\right) \\
   &=& \sum_{a=1}^\infty a^{k-2s}  \sum_{v=1}^\ci (a v)^{s+w-k-1}  \sum_{0 \leqslant b <a}
\coh_k\left(z,s;\frac{b}{a}\right) \\
   &=& \zeta(k+1-s-w) \sum_{a=1}^\infty a^{w-s-1}    \sum_{0 \leqslant b <a}
\coh_k\left(z,s;\frac{b}{a}\right).
\end{eqnarray*}
Consequently, for $2<\sigma <k-2$ and $\Re(w) <\sigma-1, k-1-\sigma$
\begin{equation}\label{sskv1}
\zeta(1-w+s) \ei_{s, k-s}(z,w) = 2 \sum_{a=1}^\infty a^{w-s-1}    \sum_{b=0}^{a-1}
\coh_k\left(z,s;\frac{b}{a}\right).
\end{equation}

Upon
taking the inner product of both sides   with $f \in \mathcal B_k$, by using
\eqref{lslw}, \eqref{innerc},
and then simplifying we obtain
\begin{equation}\label{abcde}
    \frac{(2\pi)^{k-w}}{\G(k-w)} L^*(f,s) L^*(f,w) = \zeta(k+1-s-w) \sum_{a=1}^\infty a^{w-s-1}   \sum_{b=0}^{a-1}
L^*\left(f,k-s;\frac{b}{a}\right).
\end{equation}
Since the eigenforms $f$ in $\mathcal B_k$ span $S_k$, we may verify \eqref{sskv1} by giving  another proof  of \eqref{abcde}. Note that the right side of \eqref{abcde} equals
\begin{multline*}
  \zeta(k+1-s-w) \frac{\G(k-s)}{(2\pi)^{k-s}} \sum_{a=1}^\infty a^{w-s-1}   \sum_{b=0}^{a-1} \sum_{m=1}^\infty
\frac{a_f(m) e^{2\pi i m b/a}}{m^{k-s}}  \\
  = \zeta(k+1-s-w) \frac{\G(k-s)}{(2\pi)^{k-s}}   \sum_{m=1}^\infty \sum_{a | m}^\infty a^{w-s}
\frac{a_f(m)}{m^{k-s}}   \\
  = \zeta(k+1-s-w) \frac{\G(k-s)}{(2\pi)^{k-s}} \sum_{m=1}^\infty
\frac{a_f(m) \sigma_{w-s}(m)}{m^{k-s}}.
\end{multline*}
The series $$L(f \otimes E(\cdot,v),k-s) := \sum_{m=1}^\infty
\frac{a_f(m) \sigma_{w-s}(m)}{m^{k-s}}$$ is a convolution $L$-series involving the Fourier coefficients of $f(z)$ and $E(z,v)$ for $2v=-s+w+1$ (as in \eqref{uv}) and, recalling \cite[(72)]{Za3} or \cite[(2.11)]{DO2010},
\begin{equation}\label{confrm}
\zeta(k+1-s-w) \frac{\G(k-s)}{(2\pi)^{k-s}} L(f \otimes E(\cdot,v),k-s) = \frac{ (2\pi)^{k-w}}{\G(k-w)}L^*(f,k-s) L^*(f,k-w).
\end{equation}
Applying the functional equation, \eqref{lfe},  confirms that the right side of \eqref{confrm} equals the left side of \eqref{abcde}.

Looking to simplify \eqref{sskv1} leads to the natural question:  what are the relations between the $\coh_k(z,s;p/q)$ for rational $p/q$ in the interval $[0,1)$? For example, it is a simple exercise with \eqref{innerc} and \eqref{ww} to show that
$$
q^{-s}\coh_k(z, s; p/q)=e^{-s i \pi} q^{-k+s}\coh_k(z, k-s; -p'/q)
$$
for $pp' \equiv 1 \mod q$.
With $s=k/2$ at the center of the critical strip  we get an even simpler
relation:
\begin{equation} \label{rel}
\coh_k(z, k/2; p/q)=(-1)^{k/2} \coh_k(z, k/2; -p'/q).
\end{equation}

A more interesting, but speculative, possibility would be to
argue in the reverse direction in order to derive information about
$L$-functions twisted by exponentials with {\em non-rational} exponents.
Specifically, if we established, by other means, relations between the
$\coh_k(z,s; x)$ for $x \not \in \Q$, then \eqref{sskv1} and other results
proven here might lead to relations for $L$-functions twisted by
exponentials with  non-rational exponents. That would be important
because such $L$-functions play a prominent role in Kaszorowski and
Perelli's programme of classifying the Selberg class (see e.g. \cite{KP}).
Relations between these $L$-functions seem to be necessary for the
extension of Kaszorowski and Perelli's classification to degree $2$, to
which $L$-functions of GL$(2)$ cusp forms belong.

\section{Periods of cusp forms} \label{per}
\subsection{Values of $L$-functions inside the critical strip} \label{per1}
We first review Zagier's  proof in \cite[\S 5]{Za3} of Manin's Periods Theorem. This exhibits a general principle of proving algebraicity we will
be using in the next
sections.

For all $s,w \in \C$ it is convenient to define $H_{s,w} \in S_k$ by the conditions
$$
\s{H_{s,w}}{f}=L^*(f,s) L^*(f,w) \quad \text{for all} \quad f \in \mathcal B_k.
$$
We need the following result.
\begin{lemma} \label{shim}
For $g \in S_k$ with Fourier coefficients in the field $K_g$ and $f \in \mathcal B_k$ with coefficients in $K_f$,
$$
\bigl\langle g, f \bigr\rangle / \bigl\langle f, f \bigr\rangle  \in K_g K_f.
$$
\end{lemma}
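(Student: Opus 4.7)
The plan is to reduce the ratio $\langle g,f\rangle/\langle f,f\rangle$ to a single coefficient in the eigenbasis expansion of $g$, and then invoke a Galois-invariance argument to locate that coefficient in the compositum $K_g K_f$.

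First I would expand $g$ in the orthogonal Hecke basis, writing $g=\sum_{f'\in \mathcal B_k}c_{f'}\,f'$. Because distinct Hecke eigenforms are orthogonal under the Petersson product, taking the inner product against $f$ collapses the sum to a single term, giving
$$
\frac{\langle g,f\rangle}{\langle f,f\rangle}=c_f.
$$
So the whole problem reduces to showing that this particular coefficient $c_f$ lies in $K_gK_f$.

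The next step is the standard Galois-theoretic argument. Let $\sigma$ be any automorphism of $\mathbb C$ fixing $K_gK_f$ pointwise. Applying $\sigma$ coefficient-wise to the $q$-expansion, one has $g^\sigma=g$ and $f^\sigma=f$, and more generally, for any cusp form $h\in S_k$ with algebraic Fourier coefficients, $h^\sigma$ is again in $S_k$. The key input I would cite here is that $\sigma$ permutes the normalized Hecke eigenbasis $\mathcal B_k$: this follows because $S_k$ admits a basis with Fourier coefficients in $\mathbb Z$ (e.g.\ via Poincar\'e series or integral $q$-expansions), the Hecke operators $T_n$ act on this $\mathbb Z$-lattice, and therefore the eigenform systems form a single Galois orbit, so $f'{}^\sigma\in\mathcal B_k$ for each $f'\in\mathcal B_k$.

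Applying $\sigma$ to $g=\sum_{f'}c_{f'}f'$ and using $g^\sigma=g$ gives
$$
\sum_{f'\in\mathcal B_k}c_{f'}\,f'=\sum_{f'\in\mathcal B_k}\sigma(c_{f'})\,f'{}^\sigma,
$$
and comparing coefficients in the (unique) eigenbasis expansion yields $c_{f'{}^\sigma}=\sigma(c_{f'})$. Specializing to $f'=f$, and noting $f^\sigma=f$ since $\sigma$ fixes $K_f$, gives $\sigma(c_f)=c_f$. As this holds for every $\sigma\in\mathrm{Aut}(\mathbb C/K_gK_f)$, we conclude $c_f\in K_gK_f$, which is the desired statement.

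The only subtle ingredient is the Galois-stability of $\mathcal B_k$; everything else is bookkeeping with orthogonality and automorphisms. I would therefore spend the bulk of the write-up citing (or briefly justifying) the existence of a $\mathbb Q$-rational structure on $S_k$ preserved by the Hecke algebra, and then present the orthogonality and Galois steps as a short chain of identities.
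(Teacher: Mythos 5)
Your proof is correct: the paper itself gives no argument here, only a citation to Shimura's Lemma 4 (and to [DO2010, Lemma 4.3]), and your eigenbasis-expansion-plus-Galois argument is precisely the standard proof of that cited result. One small caveat: the eigenforms in $\mathcal B_k$ need not form a \emph{single} Galois orbit, but all your argument actually uses is that any $\sigma\in\mathrm{Aut}(\C/K_gK_f)$ \emph{permutes} $\mathcal B_k$, which follows from the $\Q$-rational structure on $S_k$ being preserved by the $T_n$, so the conclusion stands.
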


\begin{proof}
See Shimura's general result \cite[Lemma 4]{Sh}. It is also a simple extension of \cite[Lemma 4.3]{DO2010}.
\end{proof}

Let $K_{critical}$ be the field obtained by adjoining  to $\Q$ all the Fourier coefficients of
$$
\Bigl\{ H_{s,k-1}, H_{k-2,w} \ \Big| \ 1\lqs s,w \lqs k-1, s \text{ even }, w \text{ odd } \Bigr\}.
$$
Thus, with $f \in \mathcal B_k$ and employing Lemma \ref{shim},
\begin{equation}\label{ll45}
L^*(f,k-1) L^*(f,k-2) = \bigl\langle  H_{ k-1, k-2},f \bigr\rangle = c_{f} \bigl\langle f, f \bigr\rangle
\end{equation}
for $c_f \in K_{critical}K_f$ and the left side of \eqref{ll45} is nonzero because the Euler product for $L^*(f,s)$ converges for $\Re(s)>k/2+1/2$.
Set
\begin{equation}\label{omegas}
\omega_+(f) := \frac{c_f \bigl\langle f, f \bigr\rangle}{L^*(f,k-1)}, \quad \omega_-(f) := \frac{\bigl\langle f, f \bigr\rangle}{L^*(f,k-2)}.
\end{equation}
Then $\omega_+(f)\omega_-(f) = \bigl\langle f, f \bigr\rangle$ and we have:
\begin{lemma} \label{zag1}
For each $f \in \mathcal B_k$
$$
L^*(f,s)/\omega_+(f), \quad L^*(f,w)/\omega_-(f) \in K_{critical}K_f
$$
for all $s,w$ with $1 \lqs s, w \lqs k-1$ and $s$ even, $w$ odd.
\end{lemma}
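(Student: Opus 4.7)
The plan is to use Lemma \ref{shim} together with the definition of $K_{critical}$, which is set up precisely so that each $H_{s,w}$ with $s$ even, $w$ odd and $1 \lqs s,w \lqs k-1$ has Fourier coefficients in $K_{critical}$. The two claims will be handled symmetrically: the even case via $H_{s,k-1}$ (legal because $k$ even forces $k-1$ odd), and the odd case via $H_{k-2,w}$ (legal because $k-2$ is even).

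For the first claim, fix an even $s$ with $1 \lqs s \lqs k-1$. Applying Lemma \ref{shim} to $g=H_{s,k-1}$ (whose coefficients lie in $K_{critical}$ by definition) yields
\[
\frac{L^*(f,s)L^*(f,k-1)}{\s{f}{f}} \;=\; \frac{\s{H_{s,k-1}}{f}}{\s{f}{f}} \;\in\; K_{critical}K_f.
\]
Next I would unwind the definition of $\omega_+(f)$ in \eqref{omegas}: since $c_f = L^*(f,k-1)L^*(f,k-2)/\s{f}{f}$, one finds $\omega_+(f) = L^*(f,k-2)$, so
\[
\frac{L^*(f,s)}{\omega_+(f)} \;=\; \frac{L^*(f,s)L^*(f,k-1)}{\s{f}{f}} \cdot \frac{1}{c_f}.
\]
The first factor lies in $K_{critical}K_f$ by the displayed line above, and dividing by $c_f\in K_{critical}K_f$ preserves membership \emph{provided} $c_f \neq 0$. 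This is where I would invoke the observation already made below \eqref{ll45}: both $L^*(f,k-1)$ and $L^*(f,k-2)$ are nonzero since their arguments satisfy $\Re(s)>k/2+1/2$, placing them inside the region of absolute convergence of the Euler product.

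The second claim is essentially identical but one step shorter. For odd $w$ with $1 \lqs w \lqs k-1$, the form $H_{k-2,w}$ has coefficients in $K_{critical}$, so Lemma \ref{shim} gives
\[
\frac{L^*(f,k-2)L^*(f,w)}{\s{f}{f}} \;\in\; K_{critical}K_f.
\]
Since $\omega_-(f) = \s{f}{f}/L^*(f,k-2)$ by \eqref{omegas}, the left-hand side is exactly $L^*(f,w)/\omega_-(f)$, and we are done.

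The only real obstacle is conceptual rather than computational: one must verify that the $(s,w)$-pairs appearing in the two definitions $H_{s,k-1}$ and $H_{k-2,w}$ fall within the parity ranges that define $K_{critical}$, and that the auxiliary values $L^*(f,k-1), L^*(f,k-2)$ used to normalize are nonzero. Both are easy but essential, and once checked the proof is a two-line application of Lemma \ref{shim}.
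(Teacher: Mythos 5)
Your proof is correct and follows essentially the same route as the paper: apply Lemma \ref{shim} to $H_{s,k-1}$ and $H_{k-2,w}$, then normalize by $\omega_\pm(f)$ using the nonvanishing of $L^*(f,k-1)$ and $L^*(f,k-2)$ in the region of absolute convergence. Your explicit observation that $\omega_+(f)=L^*(f,k-2)$ and that the second case needs no division by $c_f$ is a harmless (indeed slightly cleaner) unwinding of the same computation.
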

\begin{proof}
For such $s$ and $w$,
\begin{eqnarray*}
  \frac{L^*(f,s)}{\omega_+(f)} &=&  \frac{L^*(f,s)L^*(f,k-1)}{c_f \bigl\langle f, f \bigr\rangle} \ \ = \ \ \frac{\bigl\langle  H_{s, k-1}, f \bigr\rangle}{c_{f} \bigl\langle f, f \bigr\rangle} \ \ = \ \ \frac{c'_{f} \bigl\langle f, f \bigr\rangle}{c_{f} \bigl\langle f, f \bigr\rangle}\in  K_{critical}K_f \\
   \frac{L^*(f,w)}{\omega_-(f)}&=&  \frac{L^*(f,w)L^*(f,k-2)}{c_f \bigl\langle f, f \bigr\rangle} \ \ = \ \ \frac{\bigl\langle  H_{k-2, w}, f \bigr\rangle}{c_{f} \bigl\langle f, f \bigr\rangle} \ \ = \ \ \frac{c''_{f} \bigl\langle f, f \bigr\rangle}{c_{f} \bigl\langle f, f \bigr\rangle}\in  K_{critical}K_f. \qedhere
\end{eqnarray*}
\end{proof}



To deduce Manin's Theorem from Lemma \ref{zag1}, we use Zagier's explicit expression for $H_{s, w}$. For
$n \gqs 0$, even $k_1,k_2 \gqs 4$ and $k=k_1+k_2+2n$, \eqref{zag} implies
\begin{equation}\label{qwet}
(-1)^{k_1/2} 2^{3-k} \frac{k_1 k_2}{B_{k_1} B_{k_2}}\binom{k-2}{n} H_{n+1,n+k_2}=\frac{[E_{k_1}, E_{k_2}]_n}{(2\pi i)^n}.
\end{equation}
 The Fourier coefficients of $E_{k_1}, E_{k_2}$ are rational and hence the right side of \eqref{qwet} has rational coefficients. Then $H_{n+1,n+k_2}$ has Fourier coefficients in $\Q$ (and also for $k_1$, $k_2 = 2$ as described in \cite[p. 214]{KZ}). It follows that  $K_{critical} = \Q$ and Lemma \ref{zag1} becomes Theorem \ref{manin}, Manin's Periods Theorem.

\subsection{Arbitrary $L$-values}\label{out}
With the results of the last section we may now give the proof of Theorem  \ref{kdkd}, restated here:
\begin{theorem} For all $f \in \mathcal B_k$ and $s \in \C$, with $\omega_+(f)$, $\omega_-(f)$ as in Manin's Theorem,
\begin{align*}
    L^*(f,s)/\omega_+(f) & \in K\bigl(\ei^*_{s,k-s}(\cdot,k-1)\bigr) K_f,\\
    L^*(f,s)/\omega_-(f) & \in K\bigl(\ei^*_{k-2,2}(\cdot,s)\bigr) K_f.
\end{align*}
\end{theorem}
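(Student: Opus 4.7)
The plan is to run the argument of Lemma \ref{zag1} essentially verbatim, but with the explicit kernel $\ei^*_{s,k-s}(\cdot,w)$ in the role of the abstract cusp form $H_{s,w}$. Theorem \ref{maineis} supplies both ingredients: $\ei^*_{s,k-s}(\cdot,w) \in S_k(\G)$ for all $s,w \in \C$, and
$$\bigl\langle \ei^*_{s,k-s}(\cdot,w),\, f\bigr\rangle = L^*(f,s)\, L^*(f,w) \quad \text{for every } f \in \mathcal B_k.$$
Shimura's algebraicity principle (Lemma \ref{shim}) then transfers the $L$-value product into the field generated by the Fourier coefficients of the kernel.

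For the first inclusion, I would specialize $w = k-1$ above and apply Lemma \ref{shim} with $g = \ei^*_{s,k-s}(\cdot,k-1)$ to get
$$\frac{L^*(f,s)\, L^*(f,k-1)}{\langle f,f\rangle} = \frac{\bigl\langle \ei^*_{s,k-s}(\cdot,k-1),\, f\bigr\rangle}{\langle f,f\rangle} \in K\bigl(\ei^*_{s,k-s}(\cdot,k-1)\bigr)\, K_f.$$
Recall from \S\ref{per1} that $\omega_+(f) = c_f\, \langle f,f\rangle / L^*(f,k-1)$, where by Zagier's formula \eqref{qwet} one has $K_{critical} = \Q$, so $c_f \in K_f$. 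Moreover $c_f \neq 0$ because $L^*(f,k-1)$ and $L^*(f,k-2)$ are both given by absolutely convergent Euler products for $k \gqs 6$. Dividing through,
$$\frac{L^*(f,s)}{\omega_+(f)} = \frac{1}{c_f}\cdot\frac{L^*(f,s)\, L^*(f,k-1)}{\langle f,f\rangle} \in K\bigl(\ei^*_{s,k-s}(\cdot,k-1)\bigr)\, K_f.$$

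The second inclusion proceeds along the same lines, in fact more cleanly, since $\omega_-(f) = \langle f,f\rangle/L^*(f,k-2)$ carries no $c_f$ factor. Applying Theorem \ref{maineis} to the kernel $\ei^*_{k-2,2}(\cdot,s)$ (obtained by taking the first parameter equal to $k-2$ and the second parameter equal to $s$) yields $\bigl\langle \ei^*_{k-2,2}(\cdot,s),\, f\bigr\rangle = L^*(f,k-2)\, L^*(f,s)$, and Lemma \ref{shim} delivers
$$\frac{L^*(f,s)}{\omega_-(f)} = \frac{L^*(f,s)\, L^*(f,k-2)}{\langle f,f\rangle} \in K\bigl(\ei^*_{k-2,2}(\cdot,s)\bigr)\, K_f.$$
No serious obstacle arises: the nontrivial analytic input (the packaging of the product $L^*(f,s) L^*(f,w)$ into a single cusp form whose Fourier expansion one can name) has already been carried out in Theorem \ref{maineis}, and the algebraic bookkeeping with Shimura's lemma is formal. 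The one subtle point is the nonvanishing and field-of-definition of $c_f$, which is handled exactly as in Zagier's original argument.
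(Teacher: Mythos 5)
Your proposal is correct and follows the paper's own route exactly: the paper likewise identifies $\ei^*_{s,k-s}(\cdot,w)$ with $H_{s,w}$ via Theorem \ref{maineis} and then repeats the argument of Lemma \ref{zag1}, using Shimura's Lemma \ref{shim} and the fact that $c_f \in K_{critical}K_f = K_f$ is nonzero. The only difference is that you write out the details the paper leaves as ``arguing as in Lemma \ref{zag1},'' and you do so accurately.
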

\begin{proof}
By Theorem \ref{maineis}, we have $H_{s,w}(z)=\ei^*_{s,k-s}(z,w)$ for all $s$, $w \in \C$. Thus, arguing as in Lemma \ref{zag1} with $\ei^*_{s,k-s}(\cdot,k-1)=H_{s,k-1}$ and $\ei^*_{k-2,2}(\cdot,s)=H_{k-2,s}$ yields the theorem.
\end{proof}

We indicate briefly how the double Eisenstein series Fourier coefficients required to define $K\bigl(\ei^*_{s,k-s}(\cdot,k-1)\bigr)$ and $K\bigl(\ei^*_{k-2,2}(\cdot,s)\bigr)$ in Theorem \ref{kdkd} may be calculated when $s\in \Z$, using a slight extension of the methods in
\cite[\S 3]{DO2010}. We wish to find the $l$-th Fourier coefficient, $a_{s,w}(l)$, of $H_{s,w}(z)=\ei^*_{s,k-s}(z,w)$ for $s$ even and $w$ odd (and we assume $s,w \gqs k/2>1$).
With Proposition \ref{k1k2}, this is $(-1)^{k_2/2}/(2\pi^{k/2})$ times the $l$-th Fourier coefficient of
$$
\pi_{hol}\Bigl[  y^{-k/2}E^*_{k_1}(z,  u)E^*_{k_2}(z,  v) \Bigr]
$$
for
$
 u=(s+w-k+1)/2  $ and  $  v=(-s+w+1)/2  $ both in $\Z$.
 Let
\begin{multline*}
F(z):=y^{-k/2} E^*_{k_1}(z,u) E^*_{k_2}(z,v)  \\
-\frac{\theta_{k_1}(u)\theta_{k_2}(1-v)}{\theta_{k}(s+1-k/2)}y^{-k/2} E^*_k(z,s+1-k/2) -\frac{\theta_{k_1}(u)\theta_{k_2}(v)}{\theta_{k}(w+1-k/2)}y^{-k/2} E^*_k(z,w+1-k/2).
\end{multline*}
Then  $\pi_{hol}\left( y^{-k/2} E^*_{k_1}(z,u) E^*_{k_2}(z,v) \right) = \pi_{hol}\left( F(z) \right)$ because $\pi_{hol}\left( y^{-k/2} E^*_k(z,s)\right) =0$ for every $s$. We have constructed $F$   so that $F(z) \ll y^{-\epsilon}$ as $y \to \ci$ and we may use \cite[Lemma 3.3]{DO2010} to obtain
$$
a_{s,w}(l)=\frac{(-1)^{k_2/2}(4\pi l)^{k-1}}{(2\pi^{k/2}) (k-2)!}\int_0^\infty F_l(y) e^{-2 \pi l y} y^{k-2} \, dy,
$$
on writing $F(z)=\sum_{l \in \Z} e^{2 \pi i l x}y^{-k/2} F_l(y)$. The functions $F_l(y)$ are sums involving the Fourier coefficients of $E^*_{k_1}(z,  u)$ and $E^*_{k_2}(z,  v)$ with $u,v \in \Z$. As shown in \cite[Theorem 3.1]{DO2010} these coefficients are simply expressed in terms of  divisor functions, Bernoulli numbers and a combinatorial part. For $s,w$ in the critical strip, this calculation yields an explicit finite formula for $a_{s,w}(l)$ in \cite[Theorem 1.3]{DO2010} (and another proof that $H_{s,w}$ in \eqref{qwet} has rational Fourier coefficients and that $K_{critical} = \Q$). For $s,w$ outside the critical strip, we obtain infinite series representations for $a_{s,w}(l)$, but again involving nothing more complicated than divisor functions and Bernoulli numbers.
Further details of this computation will appear in \cite{Oeis}.

\subsection{Twisted Periods}\label{twper}
There is an analog of Manin's Periods Theorem for twisted $L$-functions.
Let $p/q \in \Q$ and let $u$ be an integer with $1\lqs u \lqs k-1$. Manin shows in \cite[(13)]{Ma1} (see also \cite[Chapter 5]{La}) that
$i^u\int_0^{p/q} f(iy) y^{u-1} \, dy$ is an integral linear combination of periods
$i^v\int_0^{\infty} f(iy) y^{v-1} \, dy$ for $v=1, \dots, k-1$. With \eqref{Mellin} this
proves
\begin{equation*}
i^u q^{k-2} L^*(f,u;p/q) \ \ \in \ \
\Z \cdot i L^*(f,1) + \Z \cdot i^2 L^*(f,2) + \cdots + \Z \cdot i^{k-1} L^*(f,k-1).
\end{equation*}
Therefore, Theorem \ref{manin} implies the next result.
\begin{prop} \label{twists} For all $f \in \mathcal B_k$,  $p/q
\in \Q$ and integers $u$ with $1\lqs u \lqs k-1$,
$$
L^*(f,u;p/q) \in K_f(i)\omega_+(f)+K_f(i)\omega_-(f).
$$
\end{prop}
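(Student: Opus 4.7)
The plan is to simply combine the two inputs that the paragraph preceding the statement already cites: Manin's rationality relation expressing the twisted period as an integral linear combination of untwisted periods, and the Periods Theorem (Theorem \ref{manin}) which controls the untwisted periods modulo $\omega_\pm(f)$.

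First I would record Manin's identity
$$
i^u q^{k-2} L^*(f,u;p/q) \;=\; \sum_{v=1}^{k-1} n_v\, i^v L^*(f,v),
$$
valid for some $n_v \in \Z$ depending on $f$, $u$ and $p/q$; this is exactly the content of the displayed inclusion quoted from \cite[(13)]{Ma1}. Next I would split the sum on the right according to the parity of $v$, so that the even-$v$ terms and odd-$v$ terms are handled separately.

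The second step is to apply Theorem \ref{manin}: for even $v$ with $1\lqs v \lqs k-1$ we have $L^*(f,v)/\omega_+(f) \in K_f$, and for odd $v$ in the same range $L^*(f,v)/\omega_-(f) \in K_f$. Substituting into the above identity yields
$$
i^u q^{k-2} L^*(f,u;p/q) \;=\; A_+\, \omega_+(f) \;+\; A_-\, \omega_-(f)
$$
with $A_+ = \sum_{v \text{ even}} n_v i^v \bigl(L^*(f,v)/\omega_+(f)\bigr)$ and $A_- = \sum_{v \text{ odd}} n_v i^v \bigl(L^*(f,v)/\omega_-(f)\bigr)$, both of which lie in $K_f(i)$ since $i^v \in \Z[i] \subset K_f(i)$ and each ratio lies in $K_f$.

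Finally I divide through by the scalar $i^u q^{k-2}$, which lies in $\Q(i) \subset K_f(i)$ and is nonzero. The result is that $L^*(f,u;p/q)$ belongs to $K_f(i)\omega_+(f) + K_f(i)\omega_-(f)$, as required. There is no real obstacle here: the proof is a direct two-step deduction, with the only bookkeeping point being to keep track of powers of $i$ after separating by parity so that all constants are seen to land in $K_f(i)$ rather than merely $K_f$.
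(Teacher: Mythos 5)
Your proposal is correct and is essentially identical to the paper's own argument: the paper likewise quotes Manin's integral linear-combination relation for $i^u q^{k-2} L^*(f,u;p/q)$ in terms of the $i^v L^*(f,v)$ and then states that Theorem \ref{manin} immediately yields the result. Your write-up merely makes explicit the parity split and the division by $i^u q^{k-2}$, which the paper leaves implicit.
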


Employing \eqref{epq}, a similar proof to that of Theorem \ref{kdkd} in the last section shows the following.
\begin{prop} For all $f \in \mathcal B_k$, $p/q \in \Q$ and $s \in \C$, with $\omega_+(f)$, $\omega_-(f)$ as in Manin's Theorem,
\begin{align*}
    L^*(f,s;p/q)/\omega_+(f) & \in K\bigl(\ei^*_{k-s,s}(\cdot,1;p/q)\bigr) K_f,\\
    L^*(f,s;p/q)/\omega_-(f) & \in K\bigl(\ei^*_{k-s,s}(\cdot,2;p/q)\bigr) K_f.
\end{align*}
\end{prop}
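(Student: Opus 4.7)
The plan is to mimic exactly the proof of Theorem \ref{kdkd} given in \S\ref{out}, replacing the use of \eqref{lslw} by its twisted counterpart \eqref{epq}. The only inputs needed are: (i) the inner product formula \eqref{epq}, (ii) the definitions \eqref{omegas} of the periods $\omega_\pm(f)$, (iii) Shimura's Lemma \ref{shim}, and (iv) the fact that the scalar $c_f$ appearing in \eqref{omegas} lies in $K_f$, which was established at the end of \S\ref{per1} where it was shown that $K_{critical} = \Q$.

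Concretely, first I would specialize \eqref{epq} by replacing $s$ with $k-s$ and setting $w=1$ or $w=2$. Using the functional equation \eqref{lfe} to turn $L^*(f,k-1)$ and $L^*(f,k-2)$ into the normalizing factors appearing in \eqref{omegas}, this gives
\begin{align*}
\s{\ei^*_{k-s, s}(\cdot,1;p/q)}{f} &= L^*(f,s;p/q)\, L^*(f,k-1),\\
\s{\ei^*_{k-s, s}(\cdot,2;p/q)}{f} &= L^*(f,s;p/q)\, L^*(f,k-2).
\end{align*}
Combined with the identities $L^*(f,k-1) = c_f \s{f}{f}/\omega_+(f)$ and $L^*(f,k-2) = \s{f}{f}/\omega_-(f)$ coming from \eqref{omegas}, this yields
$$
\frac{L^*(f,s;p/q)}{\omega_+(f)} = \frac{1}{c_f}\cdot\frac{\s{\ei^*_{k-s, s}(\cdot,1;p/q)}{f}}{\s{f}{f}}, \qquad
\frac{L^*(f,s;p/q)}{\omega_-(f)} = \frac{\s{\ei^*_{k-s, s}(\cdot,2;p/q)}{f}}{\s{f}{f}}.
$$

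Next, I would apply Shimura's Lemma \ref{shim} to each right-hand side. This requires that $\ei^*_{k-s, s}(\cdot,w;p/q) \in S_k(\G)$, which follows from the analysis of \eqref{zpqeis} showing the twisted series is a subseries of $\ei_{s,k-s}(z,w)$ and thus a cusp form in the domain of convergence; the analytic continuation provided by \eqref{key3} extends membership in $S_k$ to all $s,w \in \C$. Shimura's lemma then places each quotient $\s{\ei^*_{k-s,s}(\cdot,w;p/q)}{f}/\s{f}{f}$ in $K\bigl(\ei^*_{k-s,s}(\cdot,w;p/q)\bigr)K_f$, which is the target field.

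The only non-cosmetic point to verify is the appearance of the factor $1/c_f$ in the $\omega_+$ case; this is harmless because $c_f \in K_{critical}K_f = \Q \cdot K_f = K_f$ by the discussion following \eqref{qwet}, so multiplication by $1/c_f$ does not enlarge the field. There is no substantive obstacle here: the proof is essentially a mechanical transcription of the proof of Theorem \ref{kdkd}, with \eqref{lslw} swapped for \eqref{epq}. If anything, the mildest subtlety is bookkeeping: making sure the first argument of $\ei^*$ is $k-s$ (not $s$) so that the formula delivers $L^*(f,s;p/q)$ rather than $L^*(f,k-s;p/q)$ on the right of \eqref{epq}.
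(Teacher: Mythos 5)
Your proposal is correct and is exactly the argument the paper intends: the paper's proof is the one-line remark that one employs \eqref{epq} and repeats the proof of Theorem \ref{kdkd}, which is precisely what you have written out, including the key bookkeeping that the first argument must be $k-s$ and that $c_f\in K_f$ since $K_{critical}=\Q$. The only cosmetic slip is your appeal to the functional equation \eqref{lfe}: it is not needed, since \eqref{epq} already produces $L^*(f,k-w)$, which equals $L^*(f,k-1)$ and $L^*(f,k-2)$ directly at $w=1,2$.
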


\section{The non-holomorphic case}
\subsection{Background results and notation} \label{pww}
We will need a non-holomorphic analog of the Cohen kernel $\coh_k(z,s)$:
\begin{defin} With $z \in \H$, $s$, $s' \in \C$  define the {\em non-holomorphic kernel} $\ke$ as
\begin{equation}\label{nhker}
\ke(z;s,s'):= \frac 12\sum_{\g \in \G} \frac{\Im(\g z)^{s + s'}}{|\g z|^{2s}}.
\end{equation}
\end{defin}
Following directly from the results  in \cite[\S 5.2]{DO2010}, it is absolutely convergent, uniformly on compacta, for $z \in \H$ and $\Re(s), \Re(s')>1/2$. The kernel $\ke(z;s,s')$ was introduced by Diaconu  and Goldfeld in \cite[(2.1)]{dg}, (though they describe it there as a Poincar\'e series and their kernel is a product of $\Gamma$ factors). Starting with the identity (\cite[Prop. 3.5]{dg})
$$
\s{f\cdot \ke(\cdot;s,s')}{g}=\frac{\G(s+s'+k-1)}{2^{s+s'+k-1}}\int_{-\infty}^{\infty} \frac{L^*(f,\alpha+i \beta) L^*(g,-s+s'+k-\alpha-i \beta)}{\G(s+\alpha+i \beta) \G(-s+s'+k-\alpha-i \beta)}\, d\beta
$$
for $f$, $g$ in $\mathcal B_k$, they provide a new method to establish estimates for the second moment of $L^*(f,s)$ along the critical line $\Re(s)=k/2$. They give similar results for $L^*(u_j,s)$, the $L$-function associated to a Maass form $u_j$ as defined below.

 The spectral decomposition of $\ke(z;s,s')$ and its meromorphic continuation in the $s$, $s'$ variables is shown in \cite[\S 5]{dg}. We do the same; our treatment is slightly different and we include it in $\S 9.2$ for completeness.

For $\G=\SL(2,\Z)$, the discrete spectrum of the Laplace operator $\Delta=-4y^2\partial_z \partial_{\overline z}$ is given by
 $u_0$, the constant eigenfunction, and $u_j$ for $j \in \Z_{\gqs 1}$ an orthogonal system of Maass cuspforms (see e.g. \cite[Chapters 4,7]{Iwsp}) with Fourier expansions
$$
  u_j(z) = \sum_{n \neq 0} |n|^{-1/2} \nu_j(n) W_{s_j}(nz)
$$
where $u_j$ has eigenvalue $s_j(1-s_j)$ and by Weyl's law \cite[(11.5)]{Iwsp}
\begin{equation}\label{weyl}
    \#\{j:|\Im(s_j)| \lqs T\} = T^2/12+O(T \log T).
\end{equation}
We may assume the $u_j$ are Hecke eigenforms
normalized to have $\nu_j(1) = 1$. Necessarily we have $\nu_j(n) \in \R$. Let $\iota$ be the antiholomorphic involution $(\iota u_j)(z):=u_j(-\overline{z})$. We may also assume each $u_j$ is an eigenfunction of this operator, necessarily with eigenvalues $\pm 1$. If $\iota u_j = u_j$ then $\nu_j(n) = \nu_j(-n)$ and $u_j$ is called {\em even}. If $\iota u_j = -u_j$ then $\nu_j(n) = -\nu_j(-n)$ and $u_j$ is  {\em odd}.

The $L$-function associated to the Maass cusp form $u_j$ is $L(u_j,s)=\sum_{n=1}^\infty \nu_j(n)/n^s$, convergent for $\Re(s)>3/2$ since $\nu_j(n) \ll n^{1/2}$ by \cite[(8.8)]{Iwsp}. The completed  $L$-function for an even form $u_j$
is
\begin{equation}\label{maassl}
L^*(u_j,s):=\pi^{-s}\G\left(\frac{s+s_j-1/2}{2}\right)\G\left(\frac{s-s_j+1/2}{2}\right)L(u_j,s)
\end{equation}
and it satisfies
\begin{equation}\label{maassl0}
L^*(u_j,1-s) = L^*(u_j,s)= \overline{L^*(u_j,\overline{s})}.
\end{equation}
  See \cite[p. 107]{Bu} for \eqref{maassl}, \eqref{maassl0} and the analogous  odd case.

To $E(z, s)$ (recall \eqref{fec}) we associate the $L$-function
\begin{equation*}
 L(E(\cdot,s),w):=\sum_{m=1}^\infty \frac{\phi(m,s)}{m^w}
\end{equation*}
The well-known identity $\sum_{m=1}^{\infty} \sigma_x(m)/m^w=\zeta(w)\zeta(w-x)$ implies
\begin{equation}\label{x}
L(E(\cdot,s),w) = \frac{2 \pi^s}{\G(s)}\frac{\zeta(w+s-1/2) \zeta(w-s+1/2)}{\zeta(2s)}.
\end{equation}

\subsection{The non-holomorphic kernel $\ke$}\label{ay}
Throughout this section we use $s=\sigma+it$, $s'=\sigma'+it'$.
Recall $\ke(z;s,s')$ defined in \eqref{nhker} for $\Re(s)$, $\Re(s')>1/2$.
 Our goal is to find the spectral decomposition of $\ke(z;s,s')$ and prove its meromorphic continuation  in $s$ and $s'$. See  \cite[\S 5]{dg} and also \cite[\S 7.4]{Iwsp} for a similar decomposition and continuation of the automorphic Green function.

A routine verification (using \cite[Lemma 9.2]{JO1} for example) yields
\begin{equation}\label{dele}
    \Delta \ke(z;s,s') =(s+s')(1-s-s')\ke(z;s,s') +4ss'\ke(z;s+1,s'+1).
\end{equation}
Put
$$
\xi_{\Z}(z,s):=\sum_{m \in \Z} \frac 1{|z+m|^{2s}}.
$$
Then
\begin{equation}\label{p1}
\ke(z;s,s')= \sum_{\g \in \G_\ci \backslash \G} \Im(\g z)^{s+s'} \xi_{\Z}(\g z,s).
\end{equation}
Use the Poisson summation formula as in \cite[\S 3.4]{Iwsp} or \cite[Th. 3.1.8]{G} to see that
\begin{equation}\label{p2}
\xi_{\Z}(z,s) = \frac{\pi^{1/2} \G(s-1/2)}{\G(s)} y^{1-2s} + \frac{2 \pi^s}{\G(s)} y^{1/2-s} \sum_{m \neq 0} |m|^{s-1/2} K_{s-1/2}(2 \pi |m|y) e^{2\pi imx}
\end{equation}
for $\Re(s)>1/2$. Set
\begin{equation}\label{p3}
\xi^\sharp_{\Z}(z,s) := \sum_{m \neq 0} |m|^{s-1/2} K_{s-1/2}(2 \pi |m|y) e^{2\pi imx}.
\end{equation}
Let $B_\rho:=\{z\in \C: |z| \lqs \rho\}$. Then with \cite[Lemma 6.4]{JO2}
$$
\sqrt{y}K_{s-1/2}(2\pi y) \ll e^{-2\pi y}\left(y^{\rho+3}+y^{-\rho-3} \right)
$$
for all $s \in B_\rho$ and $\rho,y>0$ with the implied constant depending only on $\rho$. Hence
$$
\xi^\sharp_{\Z}(z,s) \ll \sum_{m=1}^\infty e^{-2 \pi my}\left(m^{\rho+\sigma+2}y^{\rho+5/2}+m^{-\rho+\sigma-4}y^{-\rho-7/2} \right).
$$
We also have \cite[Lemma 6.2]{JO2}
$$
\sum_{m=1}^\infty m^\rho e^{-2 m\pi y} \ll e^{-2\pi y}\left(1 + y^{-\rho-1}\right)
$$
for all  $y>0$ with the implied constant depending only on $\rho \gqs 0$. Therefore
\begin{equation}\label{xis}
\xi^\sharp_{\Z}(z,s) \ll e^{-2\pi y}\left(y^{\rho+5/2}+y^{-\rho-9/2}\right).
\end{equation}
Consider the weight $0$ series
\begin{equation}\label{p4}
\ke^\sharp(z;s,s') := \sum_{\g \in \G_\ci \backslash \G} \Im(\g z)^{s'+1/2} \xi^\sharp_{\Z}(\g z,s).
\end{equation}
With \eqref{xis},
we have
\begin{equation}\label{p5}
\ke^\sharp(z;s,s') \ll \sum_{\g \in \G_\ci \backslash \G} \left(\Im(\g z)^{\sigma'+\rho+3}+\Im(\g z)^{\sigma'-\rho-4} \right)e^{-2\pi \Im(\g z)}
\end{equation}
so that $\ke^\sharp(z;s,s')$ is absolutely convergent for $\Re(s') >\rho +5$.

\begin{prop} \label{psiz1}
Let $\rho >0$ and $s,s' \in \C$ satisfy $s \in B_\rho$, $\Re(s) >1/2$ and   $\Re(s') >\rho+5$. Then
\begin{equation}\label{apsi2}
\ke(z;s,s') =  \frac{\pi^{1/2} \G(s-1/2)}{\G(s)} E(z,s'-s+1) + \frac{2 \pi^s}{\G(s)} \ke^\sharp(z;s,s')
\end{equation}
and, for an implied constant depending only on $s,s'$,
\begin{equation}\label{la2}
\ke^\sharp(z;s,s') \ll y^{5+\rho-\sigma'} \quad \text{as} \quad y \to \infty.
\end{equation}
\end{prop}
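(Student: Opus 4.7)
The plan is to prove \eqref{apsi2} by direct substitution of the Poisson summation identity \eqref{p2} into the unfolded expression \eqref{p1} for $\ke$, and to prove \eqref{la2} by isolating the identity coset and bounding the remainder via $E(z,\alpha)-y^\alpha$.

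For the identity \eqref{apsi2}, insert \eqref{p2} into \eqref{p1}. The absolute convergence of the resulting sums---assured by \eqref{xis}, \eqref{p5}, and the standing hypotheses $\sigma>1/2$, $\sigma'>\rho+5$---permits a termwise split. The factor $\Im(\g z)^{s+s'}$ combines with $\Im(\g z)^{1-2s}$ to give exponent $s'-s+1$, producing $\frac{\pi^{1/2}\G(s-1/2)}{\G(s)}E(z,s'-s+1)$; this Eisenstein series converges because $\Re(s'-s+1)>1$ (since $\sigma'>\rho+5>\rho\gqs\sigma$). The other piece combines $\Im(\g z)^{s+s'}$ with $\Im(\g z)^{1/2-s}$ to give exponent $s'+1/2$, and direct comparison with \eqref{p4} identifies it as $\frac{2\pi^s}{\G(s)}\ke^\sharp(z;s,s')$.

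For the growth estimate \eqref{la2}, begin with \eqref{p5}. The identity coset $\G_\ci I$ yields $\Im(\g z)=y$ and contributes $(y^{\sigma'+\rho+3}+y^{\sigma'-\rho-4})e^{-2\pi y}$, which is superpolynomially small as $y\to\infty$. Every other coset is represented by a matrix with $c_\g\neq 0$, so that $|c_\g z+d_\g|^2\gqs c_\g^2 y^2\gqs y^2$ and therefore $\Im(\g z)\lqs 1/y$; in particular $e^{-2\pi\Im(\g z)}\lqs 1$ on these terms. The remainder in \eqref{p5} is thus dominated by
$$\bigl[E(z,\sigma'+\rho+3)-y^{\sigma'+\rho+3}\bigr]+\bigl[E(z,\sigma'-\rho-4)-y^{\sigma'-\rho-4}\bigr].$$
The Fourier expansion \eqref{fec} gives $E(z,\alpha)-y^\alpha=O(y^{1-\alpha})$ as $y\to\infty$ for any $\alpha>1$; the hypothesis $\sigma'>\rho+5$ makes $\alpha=\sigma'-\rho-4>1$ admissible, producing the dominant contribution $y^{5+\rho-\sigma'}$, while the $\alpha=\sigma'+\rho+3$ term yields the smaller $y^{-2-\rho-\sigma'}$. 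Combining these estimates proves \eqref{la2}.

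The main care is in uniformity of the implied constants: the Bessel bound underpinning \eqref{xis} is already stated uniformly in $s\in B_\rho$, and the remainder term in the asymptotic \eqref{fec} is locally uniform in $\alpha$ away from the pole of $E(z,\alpha)$ at $\alpha=1$, which is safely avoided under the hypothesis $\sigma'>\rho+5$.
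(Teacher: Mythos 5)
Your proposal is correct and follows essentially the same route as the paper: identity \eqref{apsi2} by substituting the Poisson summation formula \eqref{p2} into the unfolded sum \eqref{p1}, and the bound \eqref{la2} by splitting off the identity coset in \eqref{p5} and estimating the remaining sum via $E(z,\alpha)-y^{\alpha}\ll y^{1-\alpha}$. The paper states the first part as immediate and uses \eqref{yuk} for the Eisenstein asymptotic where you use \eqref{fec}, but these coincide for $\SL(2,\Z)$, so the arguments are the same; your additional remarks on uniformity of constants are a harmless refinement.
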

\begin{proof}
It is clear that \eqref{apsi2} follows from \eqref{p1}, \eqref{p2}, \eqref{p3} and \eqref{p4} when $s$ and $s'$ are in the stated range. With \eqref{p5} and employing \eqref{yuk} we deduce that as $y \to \ci$
\begin{eqnarray*}
  \ke^\sharp(z;s,s')  & \ll & \left(y^{\sigma'+\rho+3}+y^{\sigma'-\rho-4} \right)e^{-2\pi y} + \sum_{\g \in \G_\ci \backslash \G, \g \neq \G_\ci} \left(\Im(\g z)^{\sigma'+\rho+3}+\Im(\g z)^{\sigma'-\rho-4} \right) \\
   & \ll & e^{-\pi y} + y^{1-(\sigma'+\rho+3)} +y^{1-( \sigma'-\rho-4)} \\
   & \ll & y^{5+\rho-\sigma'}.
\end{eqnarray*}
\vspace{-1.3cm}\[\qedhere\]
\end{proof}

Clearly, for $\Re(s') >\rho+5$, \eqref{apsi2} gives the meromorphic continuation of $\ke(z;s,s')$ to all $s \in B_\rho$.
For these $s,s'$ it follows from \eqref{la2} that  $\ke^\sharp$, as a function of $z$,  is bounded.
Also use \eqref{dele} and \eqref{apsi2} to show that
$$
\Delta \ke^\sharp(z;s,s') =(s+s')(1-s-s')\ke^\sharp(z;s,s') +4\pi s'\ke^\sharp(z;s+1,s'+1)
$$
and hence $\Delta \ke^\sharp$ is also bounded.
Therefore, with \cite[Theorems 4.7, 7.3]{Iwsp}, $\ke^\sharp$ has the spectral decomposition
\begin{equation}\label{sd}
\ke^\sharp(z;s,s') = \sum_{j=0}^\infty \frac{\s{\ke^\sharp(\cdot ;s,s')}{u_j}}{\s{u_j}{u_j}} u_j(z)
+\frac{1}{4\pi i} \int_{(1/2)} \s{\ke^\sharp(\cdot ;s,s')}{E(\cdot,r)} E(z,r) \, dr
\end{equation}
where the integral is from $1/2-i\infty$ to $1/2+i\infty$ and the convergence of \eqref{sd}  is pointwise absolute in $z$ and uniform on compacta.

\begin{lemma} \label{l1}
For $s \in B_\rho$ and  $\Re(s') >\rho+5$ we have
$$
  \s{\ke^\sharp(\cdot ;s,s')}{u_j} = \frac{\pi^{1/2-s}}{4  \G(s')} L^*(u_j, s'-s +1/2) \G\left(\frac{s' +s+s_j-1}{2} \right)\G\left(\frac{s' +s-s_j}{2} \right)
$$
when $u_j$ is an even Maass cuspform. If $u_j$ is odd or constant then the inner product is zero.
\end{lemma}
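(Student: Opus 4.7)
The natural tool is classical unfolding. Since
$$
\ke^\sharp(z;s,s') = \sum_{\g \in \G_\ci \backslash \G} \Im(\g z)^{s'+1/2} \xi^\sharp_{\Z}(\g z,s)
$$
is a Poincar\'e-like series indexed by $\G_\ci \backslash \G$, the Petersson inner product against the $\G$-invariant $u_j$ unfolds to an integral over the strip $\G_\ci\backslash\H$:
$$
\s{\ke^\sharp(\cdot;s,s')}{u_j} = \int_0^\infty\!\int_0^1 y^{s'+1/2} \xi^\sharp_{\Z}(z,s)\, \overline{u_j(z)}\, \frac{dx\,dy}{y^2}.
$$
The justification is the usual one, plus the bound \eqref{la2} to ensure convergence.

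I would then insert the Fourier expansion \eqref{p3} together with $u_j(z)=2\sqrt{y}\sum_{n\neq 0}\nu_j(n) K_{s_j-1/2}(2\pi|n|y) e^{2\pi i n x}$ (using $\nu_j(n)\in\R$ and the reality of $K_{s_j-1/2}$ on $(0,\infty)$, which follows from $s_j-1/2$ being purely imaginary or real via $K_{\overline{\nu}}(x)=\overline{K_\nu(x)}$ and $K_{-\nu}=K_\nu$). Orthogonality of $\{e^{2\pi i m x}\}$ on $[0,1)$ forces $m=n$ in the $x$-integration. For $u_j$ constant, $\xi^\sharp_\Z$ has no $m=0$ mode, so the integral vanishes; for odd $u_j$ the parity $\nu_j(-n)=-\nu_j(n)$ kills $\sum_{n\neq 0}\nu_j(n)|n|^\alpha$ for every $\alpha$; for even $u_j$ this sum folds into $2L(u_j, s'-s+1/2)$.

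The remaining $y$-integral, after the substitution $u=2\pi|n|y$, is the standard Mellin transform of a product of two $K$-Bessel functions (e.g.\ Gradshteyn--Ryzhik 6.576.4 or DLMF 10.43.19):
$$
\int_0^\infty K_{s-1/2}(u) K_{s_j-1/2}(u) u^{s'-1}\, du = \frac{2^{s'-3}}{\G(s')} \G\!\left(\tfrac{s'+s+s_j-1}{2}\right) \G\!\left(\tfrac{s'+s-s_j}{2}\right) \G\!\left(\tfrac{s'-s+s_j}{2}\right) \G\!\left(\tfrac{s'-s-s_j+1}{2}\right),
$$
valid since $\Re(s')>\rho+5$ dominates $|\Re(s-1/2)|+|\Re(s_j-1/2)|$. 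Collecting the $n$-independent constants and combining the two $\G$-factors $\G(\tfrac{s'-s+s_j}{2}) \G(\tfrac{s'-s-s_j+1}{2})$ with $L(u_j,s'-s+1/2)$ via the definition \eqref{maassl} of the completed $L$-function converts the $L$ into $\pi^{s'-s+1/2}L^*(u_j, s'-s+1/2)$; the surviving powers of $\pi$ collapse to $\pi^{1/2-s}$ and the remaining two $\G$-factors are exactly those in the claim.

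The computation is bookkeeping rather than conceptual. The only delicate point is matching the overall numerical constant: one must carefully track the factors of $2$ coming from the Whittaker normalization $W_s(mz)=2(|m|y)^{1/2}K_{s-1/2}(\cdots)$, from the fold $n \mapsto -n$ in the even case, and from the implicit $\pm I$ convention for the Petersson inner product on $\SL(2,\Z)\backslash\H$. All exchanges of sum and integral lie safely in the region of absolute convergence by the hypothesis $\Re(s')>\rho+5$, which is why this range (rather than the initial $\Re(s),\Re(s')>1/2$) is imposed.
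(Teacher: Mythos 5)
Your proposal is correct and is essentially the paper's own proof: the authors likewise unfold the sum over $\G_\ci\backslash\G$ against the $\G$-invariant $u_j$, match the Fourier mode of $\xi^\sharp_{\Z}$ with that of $u_j$ by orthogonality, evaluate the resulting integral $\int_0^\infty y^{s'}K_{s-1/2}(2\pi|m|y)K_{\overline{s_j}-1/2}(2\pi|m|y)\,dy/y$ by the same Bessel--Mellin formula (they cite Iwaniec, p.~205), and then absorb two of the four Gamma factors into $L^*$ via \eqref{maassl} together with $\overline{s_j}=1-s_j$. The differences are cosmetic only — the paper leaves the vanishing for odd and constant $u_j$ implicit, which you spell out correctly.
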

\begin{proof}
Unfolding,
\begin{eqnarray*}
  \s{\ke^\sharp(\cdot ;s,s')}{u_j}  &=& \int_{\GH} \ke^\sharp(z ;s,s') \overline{u_j(z)} \, d\mu(z) \\
   &=& \int_0^\ci \int_0^1 \left( \sum_{m \neq 0} y^{s'+1/2} |m|^{s-1/2} K_{s-1/2}(2 \pi |m|y) e^{2\pi imx} \right)
   \overline{u_j(z)} \, \frac{dx dy}{y^2} \\
   &=& 2\sum_{m \neq 0} \nu_j(m) |m|^{s-1/2} \int_0^\ci y^{s'} K_{s-1/2}(2 \pi |m|y) K_{\overline{s_j}-1/2}(2 \pi |m|y) \frac{dy}{y}.
\end{eqnarray*}
Evaluating the integral \cite[p. 205]{Iwsp} yields
$$
  \s{\ke^\sharp(\cdot ;s,s')}{u_j} = \frac{L(u_j, s'-s +1/2)}{4 \pi^{s'} \G(s')} \ \prod\G\left(\frac{s' \pm (s-1/2) \pm (\overline{s_j} -1/2)}{2} \right).
$$
Using \eqref{maassl} and that $\overline{s_j} = 1-s_j$ finishes the proof.
\end{proof}

In the same way, when $\Re(r)=1/2$,
$$
\s{\ke^\sharp(\cdot ;s,s')}{E(\cdot,r)} = \frac{L(\overline{E(\cdot, r)}, s'-s +1/2)}{4 \pi^{s'} \G(s')} \ \prod \G\left(\frac{s' \pm (s-1/2) \pm (\overline{r} -1/2)}{2} \right).
$$
Further,  $\overline{E(z, r)}=E(z,\overline{r})=E(z,1-r)$ and with \eqref{x} we have shown the following.
\begin{lemma} \label{l2}
For $s \in B_\rho$ and  $\Re(s') >\rho+5$
\begin{multline*}
 \s{\ke^\sharp(\cdot ;s,s')}{E(\cdot,r)}
 = \frac{\pi^{1/2-s}}{2\G(s') \theta(1-r)} \\
 \times \G\left(\frac{s'+s-r}{2}\right)
  \G\left(\frac{s'+s-1+r}{2}\right)  \theta\left(\frac{s'-s+r}{2}\right) \theta\left(\frac{s'-s+1-r}{2}\right).
\end{multline*}
\end{lemma}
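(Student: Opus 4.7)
The plan is to parallel the proof of Lemma \ref{l1}, replacing the Maass eigenform $u_j$ by the Eisenstein series $E(\cdot,r)$. Since $E(z,r)$ is absolutely convergent only for $\Re(r)>1$, I would first carry out the computation in that range and then invoke meromorphic continuation in $r$ to reach the line $\Re(r)=1/2$; both sides are meromorphic in $r$, so this step is harmless.

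First, I would unfold $\s{\ke^\sharp(\cdot;s,s')}{E(\cdot,r)}$ using the coset representation \eqref{p4} of $\ke^\sharp$, which converts the integral over $\GH$ into one over $\G_\ci\backslash\H$. Next, I expand $\overline{E(z,r)}=E(z,\overline r)$ by the Fourier expansion \eqref{fec}. Because $\xi^\sharp_\Z$ has no constant term in $x$, the $x$-integration pairs the frequency-$m$ term of $\xi^\sharp_\Z$ with the frequency-$(-m)$ Whittaker term of $E(\cdot,\overline r)$, and yields
\[
\s{\ke^\sharp(\cdot;s,s')}{E(\cdot,r)} = 2\sum_{m\neq 0} \overline{\phi(m,r)}\,|m|^{s-1/2}\int_0^\infty y^{s'} K_{s-1/2}(2\pi|m|y) K_{\overline r-1/2}(2\pi|m|y)\,\frac{dy}{y},
\]
in perfect analogy with the cuspidal case. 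Applying the same $K$-Bessel integral from \cite[p.~205]{Iwsp} used in Lemma \ref{l1}, and collecting the Dirichlet series $2\sum_{m\gqs 1}\overline{\phi(m,r)}/m^{s'-s+1/2} = L(\overline{E(\cdot,r)},s'-s+1/2)$ via the symmetry $\phi(-m,r)=\phi(m,r)$, I arrive at the intermediate identity already displayed in the sentence immediately before the statement.

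The final step is purely bookkeeping. On $\Re(r)=1/2$ one has $\overline r=1-r$, so $\overline r-1/2=1/2-r$ and $\overline{E(z,r)} = E(z,1-r)$. I would then apply \eqref{x} with $s \leftarrow 1-r$ and $w \leftarrow s'-s+1/2$ to express $L(E(\cdot,1-r),s'-s+1/2)$ as a product of $\zeta$-values divided by $\zeta(2-2r)$, and convert each $\zeta$ to the corresponding $\theta$-factor via the defining relation $\theta(u)=\pi^{-u}\G(u)\zeta(2u)$. Two of the four $\G$-factors in the displayed product, namely those with argument $(s'-s+1-r)/2$ and $(s'-s+r)/2$, cancel exactly against the $\G$-denominators produced from these two surviving $\theta$'s; the $\theta(s)$ denominator in \eqref{x} supplies the $\theta(1-r)$ appearing in the statement; all powers of $\pi$ collapse into the single factor $\pi^{1/2-s}$; and the two remaining $\G$-factors are precisely $\G((s'+s-r)/2)$ and $\G((s'+s-1+r)/2)$.

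The only real obstacle is keeping the bookkeeping straight: tracking the $\pi$-exponents through the $\zeta$-to-$\theta$ conversion and verifying that exactly the two intended $\G$-factors cancel, leaving the asserted two behind. No new ideas beyond those in Lemma \ref{l1} are needed.
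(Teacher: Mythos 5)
Your proposal is correct and follows the paper's own argument essentially verbatim: unfold $\ke^\sharp$ against $E(\cdot,r)$ exactly as in Lemma \ref{l1} with $\nu_j(m)$ replaced by $\overline{\phi(m,r)}$ and $\overline{s_j}$ by $\overline{r}$, evaluate the same $K$-Bessel integral to reach the intermediate identity with $L(\overline{E(\cdot,r)},s'-s+1/2)$, then use $\overline{E(z,r)}=E(z,1-r)$ on the critical line together with \eqref{x} and the $\zeta$-to-$\theta$ conversion, under which two of the four $\G$-factors cancel as you describe. (The preliminary reduction to $\Re(r)>1$ is harmless but not needed, since the unfolding is performed on $\ke^\sharp$ rather than on $E$.)
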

Recall that $\theta(s):=\pi^{-s}\G(s)\zeta(2s)$ as in \S \ref{prel}.
Let
\begin{eqnarray*}
   \ke_1(z;s,s') &:=&  \frac{\pi^{1/2} \G(s-1/2)}{\G(s)} E(z,s'-s+1)  \\
     \ke_2(z;s,s') &:=& \frac{\pi^{1/2}}{2 \G(s) \G(s')}\sum_{j=1 \atop u_j \text{ even}}^\infty  L^*(u_j, s'-s +1/2) \G\left(\frac{s' +s+s_j-1}{2} \right)\G\left(\frac{s' +s-s_j}{2} \right) \frac{u_j(z)}{\s{u_j}{u_j}}  \\
    \ke_3(z;s,s') &:=&  \frac{\pi^{1/2}}{\G(s) \G(s')}\frac{1}{4\pi i} \int_{(1/2)}  \G\left(\frac{s'+s-r}{2}\right) \G\left(\frac{s'+s-1+r}{2}\right)  \\
   && \hskip 40mm  \times \theta\left(\frac{s'-s+r}{2}\right) \theta\left(\frac{s'-s+1-r}{2}\right) \frac{E(z,r)}{\theta(1-r)} \, dr.
\end{eqnarray*}
Assembling Proposition \ref{psiz1}, \eqref{sd} and Lemmas \ref{l1}, \ref{l2} we have proven the decomposition
\begin{equation}\label{sdco}
\ke(z;s,s')=\ke_1(z;s,s')+\ke_2(z;s,s')+\ke_3(z;s,s')
\end{equation}
for $s \in B_\rho$ and  $\Re(s') >\rho+5$. This agrees exactly with \cite[(5.8)]{dg}.

Clearly $\ke_1(z;s,s')$ is a meromorphic function of $s$ and $s'$ in all of $\C$.
The same is true for $\ke_2(z;s,s')$ since the factors $L(u_j, s'-s +1/2) \frac{u_j(z)}{\s{u_j}{u_j}}$ have at most polynomial growth as $\Im(s_j) \to \ci$ while the  $\G$ factors have exponential decay by Stirling's formula. See \eqref{weyl} and \cite[\S \S 7,8]{Iwsp} for the necessary bounds.
The next result was first established in \cite[\S 5]{dg}.

\begin{theorem} \label{meromcont}
The non-holomorphic kernel $\ke(z;s,s')$ has a meromorphic continuation to all $s,s' \in \C$.
\end{theorem}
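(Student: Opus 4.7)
The plan is to show that each of the three summands $\ke_1, \ke_2, \ke_3$ in the decomposition \eqref{sdco} extends meromorphically to all $(s,s') \in \C^2$. Since \eqref{sdco} was established in the open set $\{s \in B_\rho,\, \Re(s')>\rho+5\}$ for every $\rho>0$, the identity $\ke = \ke_1+\ke_2+\ke_3$ propagates by the identity theorem to the common domain of meromorphy, giving the desired continuation of $\ke(z;s,s')$ itself.

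The piece $\ke_1(z;s,s') = \pi^{1/2}\G(s-1/2)E(z,s'-s+1)/\G(s)$ is manifestly meromorphic on $\C^2$ from the known meromorphic continuation of $E(z,s)$ in \eqref{fec}. For $\ke_2(z;s,s')$, I would combine three standard inputs: (i) Weyl's law \eqref{weyl} controlling the number of $u_j$ with $|\Im(s_j)|\leqslant T$; (ii) polynomial bounds in $\Im(s_j)$ for $L^*(u_j,s'-s+1/2)/\s{u_j}{u_j}$ on compact sets of $(s,s')$ (using convexity bounds on the symmetric square and standard sup-norm estimates on $u_j$, as in \cite[\S\S 7,8]{Iwsp}); and (iii) Stirling's formula, which shows that the product $\G((s'+s+s_j-1)/2)\G((s'+s-s_j)/2)$ decays exponentially in $|\Im(s_j)|$ on compacta in $(s,s')$. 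The exponential decay dominates the polynomial growth, so the spectral series converges normally on compact subsets of $\C^2$ avoiding the poles of individual terms, and defines a meromorphic function there.

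The delicate term is $\ke_3(z;s,s')$, which is a contour integral along $\Re(r)=1/2$. Its integrand is a meromorphic function of $(r,s,s')$ whose poles in $r$ lie at $r=s'+s+2n,\ 1-s'-s-2n,\ s-s'-2n,\ s'-s+1+2n$ (from the four $\G$-factors) and at $r=1\pm(s'-s)$ (from the two $\zeta$-factors), together with possible zeros of $\theta(1-r)$ in the denominator. For $(s,s')$ in the initial domain all of these poles stay off the line $\Re(r)=1/2$, but as $(s,s')$ varies they cross it. My plan is to carry out a standard contour deformation: given a compact set $U\subset \C^2$, choose a piecewise-smooth contour $\Gamma_U$, obtained by locally indenting $\Re(r)=1/2$ around the finitely many pole loci that intersect $U$, such that $\Gamma_U$ avoids all poles for $(s,s')\in U$. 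Along $\Gamma_U$, Stirling gives exponential decay in $|\Im(r)|$ of the $\G$ factors (since the two $\G$-factors of $r$ have opposite orientations in $\Im(r)$), which dominates the polynomial $\zeta$-bounds on vertical lines and the growth of $E(z,r)$. Hence the resulting integral converges absolutely and uniformly on $U$, defining a holomorphic function there; by the residue theorem it agrees with the original integral in the initial domain up to explicit residues contributed by poles crossed, and each residue is itself a meromorphic function of $(s,s')$.

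The main obstacle is organizational rather than conceptual: one must verify that the deformation can be carried out uniformly on compact subsets of $\C^2$ and that the decay of the integrand along $\Gamma_U$ is truly uniform. This is a routine but careful Stirling estimate, once one observes that in the product of the four $\G$-arguments $(s'+s-r)/2$, $(s'+s-1+r)/2$, $(s'-s+r)/2$, $(s'-s+1-r)/2$ the coefficients of $r$ are $-1/2,1/2,1/2,-1/2$, so that pairing gives exponential decay in $|\Im(r)|$ uniformly in bounded $(s,s')$. With those estimates in hand, $\ke_3$ is meromorphic on $\C^2$, completing the continuation of $\ke(z;s,s')$.
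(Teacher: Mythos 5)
\begin{quote}
Your proposal is correct and follows essentially the same route as the paper: meromorphy of $\ke_1$ and $\ke_2$ from the continuation of $E(z,s)$, Weyl's law and Stirling, and a contour deformation for $\ke_3$ that picks up residues as pole loci cross the line $\Re(r)=1/2$ (the paper carries this out explicitly, computing the residues at $r=s'-s$ and $r=1-(s'-s)$ to obtain the continued formulas). One small correction: the factors $\theta\bigl(\tfrac{s'-s+r}{2}\bigr)$ and $\theta\bigl(\tfrac{s'-s+1-r}{2}\bigr)$ are $\theta$-factors with poles only where their arguments equal $0$ or $1/2$ (i.e.\ at $r=\pm(s'-s)$ and $r=1\pm(s'-s)$), not $\zeta$-factors, but this does not affect the argument.
\end{quote}
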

\begin{proof}
As we have discussed, $\ke_1(z;s,s')$ and $\ke_2(z;s,s')$ are meromorphic functions of $s,s' \in \C$.  The poles of $\G(w)$ are at $w=0,-1,-2,\dots$ and $\theta(w)$ has poles exactly at $w=0,1/2$ (with residues $-1/2$, $1/2$ respectively).  Therefore, the integral in $\ke_3(z;s,s')$ is certainly an analytic function of $s,s'$ for $\sigma'>\sigma+1/2$ and  $\sigma >1/2$ since the $\G$ and $\theta$ factors have exponential decay as $|r| \to \ci$.
Next consider $s$ fixed (with $\sigma >1/2$) and $s'$ varying.  Consider a point $r_0$ with $\Re(r_0)=1/2$. Let $B(r_0)$ be a small disc centered at $r_0$ and $B(1-r_0)$ an identical disc at $1-r_0$. By deforming the path of integration to a new path $C$ to the left of $B(r_0)$ and to the right of $B(1-r_0)$, we may, by Cauchy's theorem, analytically continue $\ke_3(z;s,s')$ to $s'$ with $s'-s \in B(r_0)$. Let $C_1$ be a clockwise contour around the left side of $B(r_0)$ and $C_2$ be a counter-clockwise contour around the right side of $B(1-r_0)$ so that $C=(1/2)+C_1+C_2$. For $s'-s$ inside $C_1$ (and $1-(s'-s)$ inside $C_2$) we have
$$
\pi^{-1/2}\G(s) \G(s') \cdot \ke_3(z;s,s') = \frac{1}{4\pi i} \int_{C} \ast = \frac{1}{4\pi i} \int_{(1/2)} * + \frac{1}{4\pi i} \int_{C_1} * + \frac{1}{4\pi i} \int_{C_2} *
$$
with $*$ denoting the integrand in the definition of $\ke_3$.
Then
\begin{eqnarray*}
  \frac{1}{4\pi i} \int_{C_1} &=& \frac{-2\pi i}{4\pi i} \left(\operatornamewithlimits{Res}_{r=s'-s} \theta\left(\frac{s'-s+1-r}{2}\right)\right) \G(s)\G(s'-1/2) \frac{\theta(s'-s)}{\theta(1-s'+s)} E(z,s'-s) \\
   &=& \frac 12 \G(s)\G(s'-1/2) \frac{\theta(s'-s)}{\theta(1-s'+s)} E(z,s'-s) \\
   &=& \frac 12 \G(s)\G(s'-1/2)  E(z,s-s'+1).
\end{eqnarray*}
We get the same result for $\frac{1}{4\pi i} \int_{C_2}$ and it follows that for all $s'$ with $\sigma-1/2 < \Re(s') <\sigma+1/2$, the continuation of
$\ke_3(z;s,s')$ is given by
\begin{equation}\label{contg}
\pi^{-1/2}\G(s) \G(s') \cdot \ke_3(z;s,s') = \G(s)\G(s'-1/2)  E(z,s-s'+1)
+ \frac{1}{4\pi i} \int_{(1/2)}  *.
\end{equation}
Similarly, as $s'$ crosses the line with real part $\sigma-1/2$, the term $-\G(s-1/2)\G(s')  E(z,s'-s+1)$ must be added to the right side of \eqref{contg}. Thus, for all $s'$ with $1/2<\Re(s')<\sigma-1/2$, the continuation of $\ke(z;s,s')$ is
\begin{equation}\label{contg2}
\ke(z;s,s')  = \frac{\pi^{1/2} \G(s'-1/2)}{\G(s')} E(z,s-s'+1) + \ke_2(z;s,s') + \ke_3(z;s,s').
\end{equation}
Clearly, with \eqref{contg}, \eqref{contg2} we have demonstrated the meromorphic continuation of $\ke(z;s,s')$ to all $s,s' \in \C$ with $\Re(s),\Re(s')>1/2$. The continuation to all $s,s' \in \C$ follows in the same way with further terms in the expression for $\ke(z;s,s')$ appearing from the residues of the poles of $\G\left(\frac{s'+s-r}{2}\right) \G\left(\frac{s'+s-1+r}{2}\right)$ as $\Re(s'+s) \to -\infty$.
\end{proof}

\begin{prop}
We have the functional equation
\begin{equation}\label{funpsi}
\ke(z;s,s')=\ke(z;s',s).
\end{equation}
\end{prop}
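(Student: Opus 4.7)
The plan is to prove the functional equation first in the region of absolute convergence $\Re(s), \Re(s') > 1/2$ by directly symmetrizing the defining series, and then invoke Theorem \ref{meromcont} to extend the identity to all $s,s' \in \C$.

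The key observation is that the matrix $S = \left( \smallmatrix 0 & -1 \\ 1 & 0 \endsmallmatrix \right)$ lies in $\G=\SL(2,\Z)$ and satisfies $Sw = -1/w$, so
$$
\Im(Sw) = \frac{\Im(w)}{|w|^2} \qquad (w \in \H).
$$
In particular, for any $\g \in \G$ we have $|\g z|^{-2s} = \Im(S\g z)^{s} / \Im(\g z)^{s}$. Substituting into the definition \eqref{nhker}, the summand becomes $\Im(\g z)^{s'}\Im(S\g z)^s$, giving
$$
\ke(z;s,s') = \frac{1}{2}\sum_{\g \in \G} \Im(\g z)^{s'}\Im(S\g z)^s,
$$
which is valid in the region $\Re(s), \Re(s') > 1/2$ by absolute convergence.

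Next I would perform the change of summation variable $\g \mapsto S\g$, which is a bijection of $\G$. Since $S^2 = -I$ acts trivially on $\H$, we obtain
$$
\ke(z;s,s') = \frac{1}{2}\sum_{\g \in \G} \Im(S\g z)^{s'}\Im(S^2\g z)^s = \frac{1}{2}\sum_{\g \in \G} \Im(\g z)^{s}\Im(S\g z)^{s'},
$$
which is exactly the expression obtained from the first display by swapping $s$ and $s'$. Hence $\ke(z;s,s') = \ke(z;s',s)$ in the domain of absolute convergence.

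Finally, since both sides are meromorphic functions of $(s,s') \in \C^2$ by Theorem \ref{meromcont} and they agree on a non-empty open set, they agree wherever they are defined. There is no real obstacle here beyond spotting the identity $\Im(Sw) = \Im(w)/|w|^2$; everything else is a formal rearrangement justified by absolute convergence plus meromorphic continuation.
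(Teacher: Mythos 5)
Your proof is correct and is essentially the paper's own argument: the authors' stated ``second, easier proof'' is precisely the substitution $\g \mapsto S\g$ in \eqref{nhker} that you carry out, with the identity $\Im(Sw)=\Im(w)/|w|^2$ doing the work, followed by meromorphic continuation. (The paper's first proof instead compares the terms of the spectral decomposition \eqref{sdco} with \eqref{contg2}, but your route is the one the authors themselves flag as simpler.)
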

\begin{proof}
We may  verify \eqref{funpsi} by comparing \eqref{sdco}  with \eqref{contg2} and using that $\ke_2(z;s,s') = \ke_2(z;s',s)$ by \eqref{maassl0}, and $\ke_3(z;s,s')=\ke_3(z;s',s)$ by \eqref{the}.
There is a second, easier proof: with $S=\left (
 \smallmatrix 0 & -1 \\ 1 & 0 \endsmallmatrix \right )$, replace  $\g$ in \eqref{nhker} by $S\g$.
\end{proof}

\begin{prop} \label{poi} For  all $s,s' \in \C$ and any even  Maass Hecke eigenform $u_j$,
$$
\s{\ke(\cdot ;s,s')}{u_j} = \frac{\pi^{1/2}}{2 \G(s)  \G(s')} \G\left(\frac{s' +s+s_j-1}{2} \right)\G\left(\frac{s' +s-s_j}{2} \right) \cdot L^*(u_j, s'-s +1/2).
$$
\end{prop}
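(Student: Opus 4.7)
The plan is to combine Lemma \ref{l1} with the decomposition \eqref{apsi2} and then pass to arbitrary $(s,s')$ by analytic continuation. First I would work in the initial domain $s \in B_\rho$, $\Re(s) > 1/2$, $\Re(s') > \rho + 5$, where the identity \eqref{apsi2},
$$
\ke(z;s,s') = \frac{\pi^{1/2} \G(s-1/2)}{\G(s)} E(z,s'-s+1) + \frac{2 \pi^s}{\G(s)} \ke^\sharp(z;s,s'),
$$
is valid. Since $u_j$ is a cusp form, unfolding against the Eisenstein series gives
$$
\s{E(\cdot, s'-s+1)}{u_j} = \int_0^\infty y^{s'-s-1} \left(\int_0^1 \overline{u_j(x+iy)} \, dx\right) dy = 0,
$$
because the zeroth Fourier coefficient of $u_j$ vanishes. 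Taking the inner product of \eqref{apsi2} against $u_j$ therefore yields
$$
\s{\ke(\cdot; s, s')}{u_j} = \frac{2\pi^s}{\G(s)} \s{\ke^\sharp(\cdot; s, s')}{u_j}.
$$
Substituting the value of $\s{\ke^\sharp(\cdot; s, s')}{u_j}$ furnished by Lemma \ref{l1} and combining the $\pi$-powers via $2\pi^{s} \cdot \pi^{1/2-s}/4 = \pi^{1/2}/2$ produces precisely the stated formula in this initial domain.

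To extend to all $s,s' \in \C$, I would appeal to Theorem \ref{meromcont}. That theorem provides the meromorphic continuation of $\ke(z;s,s')$, uniformly on compacta in $z$, while $u_j$ decays exponentially at the cusp, so the integral defining $\s{\ke(\cdot; s, s')}{u_j}$ converges absolutely and, by differentiation under the integral sign, inherits meromorphicity in $(s,s')$. The right-hand side of the proposition is manifestly meromorphic on $\C^2$ since $L^*(u_j,\cdot)$ is entire and the $\G$ factors are meromorphic. Because the two meromorphic functions agree on a non-empty open subset of $\C^2$, the identity theorem forces equality everywhere.

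I do not anticipate a serious obstacle, as the substantive analytic work has already been done in Proposition \ref{psiz1}, Lemma \ref{l1}, and Theorem \ref{meromcont}. The only point requiring any care is verifying that the pointwise-in-$z$ meromorphic continuation of $\ke$ upgrades to meromorphicity of the inner product, which is routine given the uniform growth bound \eqref{la2} together with the cuspidal decay of $u_j$.
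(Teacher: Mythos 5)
Your proof is correct and follows essentially the same route as the paper: orthogonality of the cusp form $u_j$ to the Eisenstein contributions, the unfolding computation of Lemma \ref{l1} for $\s{\ke^\sharp(\cdot;s,s')}{u_j}$, and analytic continuation to all of $\C^2$. The only cosmetic difference is that you work directly from \eqref{apsi2} rather than from the spectral decomposition \eqref{sdco} (where the paper reads the answer off the term $\ke_2$), which bypasses the continuous-spectrum piece entirely but amounts to the same calculation.
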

\begin{proof}
Since each $u_j$ is orthogonal to Eisenstein series we have by \eqref{sdco} (for $s \in B_\rho$ and  $\Re(s') >\rho+5$) that
$$
\s{\ke(\cdot ;s,s')}{u_j} = \s{\ke_2(\cdot ;s,s')}{u_j}.
$$
The result follows, extending to all $s,s' \in \C$ by analytic continuation.
\end{proof}

\subsection{Non-holomorphic double Eisenstein series}\label{az}

A similar argument to the proof of \eqref{esks} shows that, for $\Re(s)$, $\Re(s')>1$ and $\Re(w)\gqs 0$,
\begin{equation}\label{zze}
\zeta(w+2s) \zeta(w+2s') \de(z,w;s,s')= \frac{1}2 \sum_{n=1}^\infty \frac{T_n \ke(z;s,s')}{n^{w-1/2}}
\end{equation}
where, in this context \cite[(3.12.3)]{G}, the appropriately normalized Hecke operator acts as
$$
T_n \ke(z) = \frac{1}{n^{1/2}} \sum_{\g \in \G \backslash \mathcal M_n} \ke( \g z).
$$
For each Maass form we have $T_n u_j = \nu_j(n) u_j$ and for the Eisenstein series \cite[Prop.
3.14.2]{G} implies $
T_n E(z,s) = n^{s-1/2} \sigma_{1-2s}(n) E(z,s)
$.
Therefore, as in \eqref{x},
$$
  \sum_{n=1}^\infty \frac{T_n E(z,s)}{n^{w-1/2}} = E(z,s)\sum_{n=1}^\infty \frac{\sigma_{1-2s}(n)}{n^{w-s}} =
    E(z,s)\zeta(w-s)\zeta(w+s-1).
$$
Now choose any $\rho >0$. For $s \in B_\rho$, $\Re(s)>1$, $\Re(s')>\rho+5$ and $\Re(w)\gqs 0$ we may
apply $T_n$ to both sides of \eqref{sdco} and obtain
\begin{multline}
  \lefteqn{\zeta(w+2s) \zeta(w+2s') \de(z,w;s,s') = \frac{\pi^{1/2} \G(s-1/2)}{2\G(s)} \zeta(s'-s+w)\zeta(s-s'+w-1)E(z,s'-s+1)} \\
   + \frac{\pi^{1/2}}{4 \G(s) \G(s')}\sum_{j=1 \atop u_j \text{ even}}^\infty  L^*(u_j, s'-s +1/2) \G\left(\frac{s' +s+s_j-1}{2} \right)\G\left(\frac{s' +s-s_j}{2} \right) L(u_j,w-1/2) \frac{u_j(z)}{\s{u_j}{u_j}}
  \\
    + \frac{\pi^{1/2}}{2\G(s)\G(s')} \frac{1}{4\pi i} \int_{(1/2)} \theta\left(\frac{s'-s+r}{2}\right) \theta\left(\frac{s'-s+1-r}{2}\right) \G\left(\frac{s'+s-r}{2}\right) \G\left(\frac{s'+s-1+r}{2}\right)\\
    \times
     \zeta(w-r) \zeta(w-1+r) \frac{E(z,r)}{\theta(1-r)} \, dr. \label{ml}
\end{multline}
Put
$$
\Omega(s,s';r):=\left. \theta\left(\frac{s'+s-r}{2}\right) \theta\left(\frac{s'+s-1+r}{2}\right) \theta\left(\frac{s'-s+r}{2}\right) \theta\left(\frac{s'-s+1-r}{2}\right) \right/ \theta(1-r).
$$
Define the completed double Eisenstein series as in \eqref{dbleis20}
and write
$$
U(z;s,s'):=\sum_{j=1 \atop u_j \text{ even}}^\infty
    L^*(u_j,s+s'-1/2)L^*(u_j,s'-s+1/2) \frac{u_j(z)}{\s{u_j}{u_j}}.
$$
As in the last section, $\Omega$ and $U$ have exponential decay as $|r|$ and $|\Im(s_j)| \to \ci$.
Specializing \eqref{ml} to $w=s+s'$, we have proved the next result.
\begin{lemma}
For $s \in B_\rho$, $\Re(s)>1$ and $\Re(s')>\rho+5$
\begin{multline}\label{ei}
\de^*(z;s,s')= 2\theta(s)\theta(s')E(z;s+s') + 2\theta(1-s)\theta(s')E(z,s'-s+1)\\
+ U(z;s,s') +\frac{1}{2\pi i} \int_{(1/2)} \Omega(s,s';r) E(z,r) \, dr.
\end{multline}
\end{lemma}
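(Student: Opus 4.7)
The plan is to obtain the identity by specializing \eqref{ml} at $w=s+s'$, multiplying through by $4\pi^{-s-s'}\G(s)\G(s')$, and adding the term $2\theta(s)\theta(s')E(z,s+s')$ from the definition \eqref{dbleis20}. The left side then becomes precisely $\de^*(z;s,s')$, so the task reduces to verifying that each of the three summands on the right of \eqref{ml} transforms into the corresponding summand of the statement. Only the functional equation \eqref{the} for $\theta$ and the definition \eqref{maassl} of $L^*(u_j,\cdot)$ are needed; no further analytic argument is required, since the range of $s,s'$ under consideration is already inside the domain where \eqref{ml} holds.

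For the first summand of \eqref{ml}, the substitution $w=s+s'$ collapses $\zeta(s'-s+w)\zeta(s-s'+w-1)$ to $\zeta(2s')\zeta(2s-1)$, and after multiplication by $4\pi^{-s-s'}\G(s)\G(s')$ the coefficient of $E(z,s'-s+1)$ becomes $2\pi^{1/2-s-s'}\G(s-1/2)\G(s')\zeta(2s-1)\zeta(2s')$. The functional equation \eqref{the}, applied in the form $\theta(1-s)=\pi^{1/2-s}\G(s-1/2)\zeta(2s-1)$, identifies this with $2\theta(1-s)\theta(s')$, giving the second summand of the lemma. The third (Maass-cuspidal) summand is handled by observing that
$$
\pi^{1/2-s-s'}\G\!\left(\tfrac{s+s'+s_j-1}{2}\right)\G\!\left(\tfrac{s+s'-s_j}{2}\right)L(u_j,s+s'-1/2)=L^*(u_j,s+s'-1/2)
$$
by \eqref{maassl}; after multiplication by $4\pi^{-s-s'}\G(s)\G(s')$, the spectral sum in \eqref{ml} therefore becomes $U(z;s,s')$.

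The final (Eisenstein) integral is the only piece where a small repackaging is needed: the factor $\pi^{1/2-s-s'}\G(\tfrac{s'+s-r}{2})\G(\tfrac{s'+s-1+r}{2})\zeta(s+s'-r)\zeta(s+s'-1+r)$ emerging after the multiplication equals $\theta(\tfrac{s'+s-r}{2})\theta(\tfrac{s'+s-1+r}{2})$, and combining this with the two $\theta$'s already present in the integrand and the factor $\theta(1-r)^{-1}$ recovers exactly $\Omega(s,s';r)$. The numerical prefactor $4\pi^{-s-s'}\G(s)\G(s')\cdot\frac{\pi^{1/2}}{2\G(s)\G(s')}\cdot\frac{1}{4\pi i}$ simplifies to $\frac{1}{2\pi i}$ after absorbing the $\pi^{1/2-s-s'}$ into the $\theta$-factors. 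The main hazard in this argument is purely bookkeeping: keeping track of powers of $\pi$ and the gamma factors as one converts back and forth between $\zeta$'s, $\G$'s and $\theta$'s, which is why applying \eqref{the} in the clean form $\theta(1-s)=\pi^{1/2-s}\G(s-1/2)\zeta(2s-1)$ is helpful throughout.
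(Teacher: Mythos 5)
Your proposal is correct and is exactly the paper's argument: the authors simply say ``Specializing \eqref{ml} to $w=s+s'$, we have proved the next result,'' and your computation supplies the bookkeeping (the conversion $2\pi^{1/2-s-s'}\G(s-1/2)\G(s')\zeta(2s-1)\zeta(2s')=2\theta(1-s)\theta(s')$, the identification of the cuspidal sum with $U$ via \eqref{maassl}, and the repackaging of the $\G$- and $\zeta$-factors into $\Omega$) that they leave implicit. All the constants and powers of $\pi$ in your verification check out.
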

From this we show the following.

\begin{theorem} \label{pra} The completed double Eisenstein series
$\de^*(z;s,s')$ has a meromorphic continuation to all $s,s' \in \C$ and we have the functional equations
 \begin{eqnarray}
   \de^*(z;s,s') &=& \de^*(z;s',s), \label{fei1}\\
   \de^*(z;s,s') &=& \de^*(z;1-s,1-s'). \label{fei2}
 \end{eqnarray}
\end{theorem}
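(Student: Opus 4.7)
I would take the spectral decomposition \eqref{ei} of the preceding lemma as the starting point, valid initially for $s\in B_\rho$, $\Re(s)>1$, $\Re(s')>\rho+5$. The strategy is to extend each of its four pieces meromorphically to all $(s,s')\in\C^2$ and then verify both functional equations term by term.

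\textbf{Meromorphic continuation and the first functional equation.} The Eisenstein pieces $2\theta(s)\theta(s')E(z,s+s')$ and $2\theta(1-s)\theta(s')E(z,s'-s+1)$ are manifestly meromorphic in $(s,s')$ from the known continuations of $\theta$ and $E$. The Maass sum $U(z;s,s')$ converges absolutely and uniformly on compacta away from the discrete pole set of the $L^*(u_j,\cdot)$: by Stirling's formula the gamma factors in the completed Maass $L$-functions decay exponentially in $|\Im(s_j)|$, dominating both the polynomial growth of $L(u_j,\cdot)$ and the density of $s_j$ provided by Weyl's law \eqref{weyl}. The contour integral continues meromorphically by shifting the line $(1/2)$ past the moving poles of the $\theta$-factors in $\Omega(s,s';r)$ in the manner of the proof of Theorem \ref{meromcont}, each shift contributing explicit meromorphic residue terms. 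Since $\rho>0$ is arbitrary, this gives the continuation of $\de^*$ to all $s,s'\in\C$. The first functional equation $\de^*(z;s,s')=\de^*(z;s',s)$ is immediate from the definition \eqref{dbleis2}: swapping $\g\leftrightarrow\delta$ in the double sum uses $|c_{\delta\g^{-1}}|=|c_{\g\delta^{-1}}|$ to give $\de(z,s+s';s,s')=\de(z,s+s';s',s)$, and the completion factors in \eqref{dbleis20} together with the correction $2\theta(s)\theta(s')E(z,s+s')$ are already symmetric in $s,s'$; the identity then extends to all $(s,s')$ by analytic continuation.

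\textbf{Second functional equation and the main obstacle.} For $\de^*(z;s,s')=\de^*(z;1-s,1-s')$ I would check each of the four pieces of \eqref{ei} separately. The Maass sum $U(z;s,s')$ is invariant under $(s,s')\mapsto(1-s,1-s')$ by the functional equation $L^*(u_j,w)=L^*(u_j,1-w)$ in \eqref{maassl0}, since the pair $(3/2-s-s',\,s-s'+1/2)$ obtained by the substitution is carried back to $(s+s'-1/2,\,s'-s+1/2)$ by $w\mapsto 1-w$ applied to each factor. The contour integral is invariant because $\Omega(1-s,1-s';r)=\Omega(s,s';r)$: a direct computation using the $\theta$-functional equation $\theta(w)=\theta(1/2-w)$ shows that under $(s,s')\mapsto(1-s,1-s')$ the first two $\theta$-factors of $\Omega$ interchange with each other, and likewise the last two. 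The hard part is matching the Eisenstein correction terms: the naive images $2\theta(1-s)\theta(1-s')E(z,2-s-s')$ and $2\theta(s)\theta(1-s')E(z,s-s'+1)$ are not termwise equal to the original pair $2\theta(s)\theta(s')E(z,s+s')$ and $2\theta(1-s)\theta(s')E(z,s'-s+1)$, even after invoking $E^*(z,r)=E^*(z,1-r)$. The explanation is that \eqref{ei} is only valid in its initial domain; continuing it to a region containing $(1-s,1-s')$ requires shifting the contour $(1/2)$ across the moving poles of $\Omega$ located at $r=s-s',\,s-s'+1,\,s'-s,\,s'-s+1$, and the residues collected during this shift are precisely what is needed to transform one Eisenstein pair into the other. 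The bulk of the proof of \eqref{fei2} is this residue bookkeeping, closely analogous to the contour shift already carried out for $\ke$ in the proof of Theorem \ref{meromcont}.
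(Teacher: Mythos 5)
Your plan is the same as the paper's: start from the decomposition \eqref{ei}, continue each piece, and obtain \eqref{fei2} by comparing \eqref{ei} evaluated at $(1-s,1-s')$ with the continuation of $\de^*(z;s,s')$ to the reflected region, using the invariances $U(z;1-s,1-s')=U(z;s,s')$ and $\Omega(1-s,1-s';r)=\Omega(s,s';r)$, which you verify correctly. Your treatment of \eqref{fei1} also matches the paper's remark that it follows from the symmetry of \eqref{dbleis20}.

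There is, however, one concrete error in the step you yourself identify as the bulk of the proof. You list the moving poles of $\Omega(s,s';r)$ to be crossed as $r=s-s',\,s-s'+1,\,s'-s,\,s'-s+1$; these are only the poles of the factors $\theta\bigl(\frac{s'-s+r}{2}\bigr)$ and $\theta\bigl(\frac{s'-s+1-r}{2}\bigr)$. You have omitted the poles of $\theta\bigl(\frac{s'+s-r}{2}\bigr)$ and $\theta\bigl(\frac{s'+s-1+r}{2}\bigr)$, located at $r=s+s',\,s+s'-1,\,1-s-s',\,2-s-s'$, which also cross the line $\Re(r)=1/2$ as $\Re(s')$ decreases. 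Their residues are exactly the terms $-2\theta(s)\theta(s')E(z,s+s')$ and $2\theta(1-s)\theta(1-s')E(z,2-s-s')$ in the paper's continuation: the first cancels the correction term $2\theta(s)\theta(s')E(z,s+s')$ present in \eqref{ei}, and the second creates the term $2\theta(1-s)\theta(1-s')E(z,2-s-s')$ appearing in \eqref{ei2}. With only the four poles you list, the contour shift would swap the $E(z,s'-s+1)$-type terms for $E(z,s-s'+1)$-type terms but would leave $2\theta(s)\theta(s')E(z,s+s')$ untouched, and the resulting expression could not be matched with $\de^*(z;1-s,1-s')$; the functional equation \eqref{fei2} would not close. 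Once all eight moving poles are accounted for, your argument goes through and coincides with the paper's proof.
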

\begin{proof}
First note that \eqref{ei} gives the meromorphic continuation of $\de^*(z;s,s')$ to all  $s,s'$ with $s \in B_\rho$ and $\Re(s')>\rho+5$.
As in the proof of Theorem \ref{meromcont}, we see that the further continuation  in $s'$ is given by \eqref{ei} along with residues that are picked up as the line of integration is crossed: for $s \in B_\rho$ fixed and $\Re(s') \to -\infty$ the continuation of $\de^*(z;s,s')$ is given by \eqref{ei} plus each of the following
\begin{eqnarray*}
     2\theta(s)\theta(1-s')E(z,s-s'+1) & &  \text{when \ \ \ \ $\Re(s') < \sigma+1/2$,} \\
   -2\theta(1-s)\theta(s')E(z,s'-s+1) & &  \text{when \ \ \ \ $\Re(s') < \sigma-1/2$,} \\
  2\theta(1-s)\theta(1-s')E(z,2-s-s') & &  \text{when \ \ \ \ $\Re(s') < -\sigma+1/2$,}  \\
  -2\theta(s)\theta(s')E(z,s+s') & & \text{when \ \ \ \ $\Re(s') < -\sigma-1/2$.}
\end{eqnarray*}
We have therefore shown the meromorphic continuation of $\de^*(z;s,s')$ to all $s \in B_\rho$ and $s' \in \C$.
Hence,
for all $s'$ with $\Re(s') <-\rho-4$, say, we have
\begin{multline}\label{ei2}
\de^*(z;s,s')= 2\theta(1-s)\theta(1-s')E(z,2-s-s') + 2\theta(s)\theta(1-s')E(z,s-s'+1)\\
+ U(z;s,s') +\frac{1}{2\pi i} \int_{(1/2)} \Omega(s,s';r) E(z,r) \, dr.
\end{multline}
The functional equation \eqref{fei2} is a consequence of  the easily checked
symmetries $U(z;1-s,1-s') = U(z;s,s')$, $\Omega(1-s,1-s';r) = \Omega(s,s';r)$
and a comparison of \eqref{ei} and \eqref{ei2}. The equation \eqref{fei1} has a similar proof, or more simply follows from the definition \eqref{dbleis20}.
\end{proof}

\begin{prop}\label{prb}
For any even  Maass Hecke eigenform $u_j$ (as in \S \ref{pww}) and all $s,s' \in \C$
$$
\s{\de^*(\cdot;s,s')}{u_j} = L^*(u_j,s+s'-1/2)L^*(u_j,s'-s+1/2).
$$
\end{prop}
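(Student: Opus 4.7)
The plan is to start in the restricted region $s \in B_\rho$, $\Re(s) > 1$, $\Re(s') > \rho + 5$ where the identity \eqref{ei} holds, and then extend by analytic continuation. In that region I would take the Petersson inner product of both sides of
\begin{equation*}
\de^*(z;s,s')= 2\theta(s)\theta(s')E(z,s+s') + 2\theta(1-s)\theta(s')E(z,s'-s+1) + U(z;s,s') +\frac{1}{2\pi i} \int_{(1/2)} \Omega(s,s';r) E(z,r) \, dr
\end{equation*}
against a fixed even Maass Hecke eigenform $u_j$.

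The first two terms on the right contribute nothing because the non-holomorphic Eisenstein series $E(\cdot,w)$ is orthogonal to every Maass cusp form $u_j$ (a standard fact from the spectral theorem; see \cite[Chap.\ 7]{Iwsp}). For the integral term $\frac{1}{2\pi i}\int_{(1/2)} \Omega(s,s';r) E(z,r)\,dr$, the same orthogonality gives $\s{E(\cdot,r)}{u_j} = 0$ for each $r$ on the critical line, so provided I can interchange the inner product with the contour integral the whole integral contributes zero. To justify the interchange, I would use that $\Omega(s,s';r)$ decays exponentially in $|\Im r|$ (as noted in the paragraph before the lemma leading to \eqref{ei}), so the integrand is dominated uniformly on a fundamental domain, and the pairing with $u_j$ (which is bounded) is absolutely integrable; Fubini then applies.

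What survives is $\s{U(\cdot;s,s')}{u_j}$. Using the defining series for $U$ and the orthogonality $\s{u_i}{u_j} = 0$ for $i \neq j$ of distinct Hecke eigenforms in a chosen orthogonal basis, I would conclude termwise that
\begin{equation*}
\s{U(\cdot;s,s')}{u_j} = L^*(u_j,s+s'-1/2)L^*(u_j,s'-s+1/2).
\end{equation*}
Again the exchange of sum and inner product requires a dominated-convergence style justification, using the exponential decay of the Gamma factors against Weyl's law \eqref{weyl} and the polynomial growth bounds on $L(u_j,\cdot)$ and $u_j/\s{u_j}{u_j}$, exactly as invoked below \eqref{sdco} to show meromorphy of $\ke_2$.

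This proves the identity on the initial strip. To conclude, I would invoke Theorem \ref{pra}: both sides are meromorphic functions of $(s,s') \in \C^2$ (the right-hand side via \eqref{maassl}), and agreement on a nonempty open set forces agreement everywhere by analytic continuation. The main technical obstacle is the rigorous justification of swapping $\s{\cdot}{u_j}$ with the spectral sum and the Eisenstein integral; once the decay estimates are in place, the rest is formal.
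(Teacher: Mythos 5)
Your proposal is correct and follows essentially the same route as the paper: the authors also pair the decomposition \eqref{ei} with $u_j$, observe that the Eisenstein terms and the continuous-spectrum integral drop out by orthogonality so that only $U(z;s,s')$ contributes, and then extend to all $s,s'$ by analytic continuation (their proof simply cites the analogous argument for Proposition \ref{poi}). Your additional care about justifying the interchanges via the decay of $\Omega$ and the Gamma factors against Weyl's law is exactly the justification the paper leaves implicit.
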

\begin{proof}
As in Proposition \ref{poi}, only  $U(z;s,s')$ in \eqref{ei} will contribute to the inner product.
\end{proof}

With Theorem \ref{pra} and Proposition \ref{prb}, we have completed the proof of Theorem \ref{cpl}.

\section{Double Eisenstein series for general groups} \label{slast}
We proved in \S \ref{conti} that for $\G=\SL(2,\Z)$ the holomorphic double Eisenstein series $\ei_{s,k-s}(z,w)$ may be continued to all $s,w$ in $\C$ and satisfies a family of functional equations. That proof  does not extend to groups where  Hecke operators are not available. To show the continuation of $\ei_{s,k-s,\ca}(z,w)$ for $\G$ an arbitrary Fuchsian group of the first kind we first demonstrate a generalization of Proposition \ref{k1k2}. Recall the definitions of $u$, $v$ in \eqref{uv} and $\varepsilon_\G$ in \eqref{eg}.

\begin{theorem} For $s$, $w$ in the initial domain of convergence and even $k_1$, $k_2 \gqs 0$ with $k=k_1+k_2$ we have
\begin{equation}\label{pgen}
\ei^*_{s,k-s,\ca}(z,w) = 2^{\varepsilon_\G-1}\pi_{hol}\left[ (-1)^{k_2/2}y^{-k/2}E^*_{k_1,\ca}(\cdot, 1-u)E^*_{k_2,\ca}(\cdot, 1-v)/(2\pi^{k/2})\right].
\end{equation}
\end{theorem}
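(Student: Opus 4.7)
The plan is to prove \eqref{pgen} by the standard strategy of showing both sides lie in $S_k(\G)$ and have equal Petersson inner products against every $f \in S_k(\G)$; finite-dimensionality then forces equality. The left-hand side is in $S_k(\G)$ by Proposition~\ref{cgs}. For the right-hand side, write
\[
F_\ca(z) := (-1)^{k_2/2} y^{-k/2} E^*_{k_1,\ca}(z, 1-u) E^*_{k_2,\ca}(z, 1-v)/(2\pi^{k/2}).
\]
In the initial convergence domain of $\ei_{s,k-s,\ca}$ one checks from $\Re(w)< \sigma-1,\, k-1-\sigma$ that $\Re(1-u), \Re(1-v)>1$, so the Eisenstein series are given by absolutely convergent series; from \eqref{yuk} at every cusp $F_\ca$ has weight $k$ and polynomial growth, hence $\pi_{hol}(F_\ca) \in S_k(\G)$ and $\s{\pi_{hol}(F_\ca)}{f} = \s{F_\ca}{f}$ for every $f \in S_k(\G)$.

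The first computation I would carry out is $\s{F_\ca}{f}$ by the Rankin--Selberg unfolding method, which is the natural generalization of \cite[Prop.~2.1]{DO2010} from $\SL(2,\Z)$ to an arbitrary Fuchsian group of the first kind. After conjugating the integral by $\sa$, I would unfold the first Eisenstein factor against the cusp form, insert the Fourier expansion \eqref{eoa} of the second factor, and recognize the resulting Mellin integrals. The arguments $1-u,\,1-v$ are tailored so that the Gamma factors inside the completed Eisenstein series combine with the Mellin transforms of $f$ to produce precisely the completed $L$-values at $s$ and at $w$, yielding
\[
\s{F_\ca}{f} = 2^{1-\varepsilon_\G}\, L^*(f,s) L^*(f,w);
\]
the factor $2^{1-\varepsilon_\G}$ records whether $\G_\ca = B$ or $\G_\ca = B\cup -B$.

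For $\s{\ei^*_{s,k-s,\ca}(\cdot,w)}{f}$, I would reparametrize the double sum by $\g = \tau\delta$ with $\tau, \delta \in B \backslash \G'$ and $c_\tau > 0$, where $\G' = \sa^{-1}\G\sa$. Using the cocycle $j(\tau\delta,z)=j(\tau,\delta z)j(\delta,z)$ this rewrites \eqref{dbleis4} as the Poincar\'e-type decomposition
\[
\ei_{s,k-s,\ca}(\sa z, w)\, j(\sa, z)^{-k} = \sum_{\delta \in B \backslash \G'} j(\delta, z)^{-k} \, \mathcal J(\delta z; s, w), \quad \mathcal J(z; s, w) := \sum_{\tau \in B \backslash \G',\ c_\tau > 0} c_\tau^{w-1} j(\tau, z)^{-s}.
\]
Absorbing the $j(\delta,z)^{-k}$ factor into $\overline{f|_k\sa}$ via weight-$k$ modularity and unfolding the $\delta$-sum reduces the inner product to a single integral of $\mathcal J(z;s,w) \overline{(f|_k\sa)(z)} y^{k-2}\,dx\,dy$ over the strip $B \backslash \H$. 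Parametrizing $B \backslash \G'$ by its generalized bottom rows $(c,d)$, collecting the inner sum on $d \bmod c$ into Kloosterman sums, and applying the Fourier expansion of $f|_k\sa$ at the cusp $\ca$ then produces, after dividing by the normalization in \eqref{complete}, the expression $L^*(f,s) L^*(f,w)$. Comparing the two inner products gives the stated factor $2^{\varepsilon_\G-1}$ and establishes \eqref{pgen} in the initial domain.

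The main obstacle will be the bookkeeping in this second unfolding: reconciling the Kloosterman-sum contributions from the $(c,d)$-parametrization of $\mathcal J$ with those appearing in the Fourier expansion of $E^*_{k_2,\ca}(z,1-v)$ on the Rankin--Selberg side, and tracking the overall constant $2^{\varepsilon_\G-1}$ arising from the discrepancy between summing over $B\backslash\G'$ and over $\G_\ca\backslash\G$. Once \eqref{pgen} holds in the initial domain, the meromorphic continuation of $\ei^*_{s,k-s,\ca}(\cdot,w)$ in $s$ and $w$ for general $\G$ is immediate from the known meromorphic continuation of $E^*_{k_i,\ca}$ on the right, which was the motivation for proving the identity in this form.
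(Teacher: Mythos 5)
Your overall strategy --- show both sides lie in $S_k(\G)$ and compare Petersson inner products against an arbitrary cusp form by unfolding each side --- is exactly the paper's, and your outline of the second unfolding (reparametrize $\g=\tau\delta$, unfold the $\delta$-sum, pass to a $(c,d)$-parametrization and collect Kloosterman sums) matches the paper's computation step for step. However, your two central intermediate claims are false for a general Fuchsian group of the first kind, and this is a genuine gap rather than bookkeeping. You assert that the Rankin--Selberg unfolding gives $\s{F_\ca}{f}=2^{1-\varepsilon_\G}L^*(f,s)L^*(f,w)$ and that the unfolding of $\ei^*_{s,k-s,\ca}(\cdot,w)$ against $f$ likewise produces $L^*(f,s)L^*(f,w)$. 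In fact each unfolding produces a \emph{single} Dirichlet series of Rankin--Selberg convolution type, namely (up to the explicit constant)
\begin{equation*}
\frac{\zeta(2-2u)\,\G(k-s)\,\G(k-w)}{(2\pi)^{2k-s-w}}\sum_{n=1}^\infty \frac{Y_{\ca\ca}(n,1-v)\,\overline{a_{f,\ca}}(n)}{n^{k-s-v}},
\end{equation*}
where the $Y_{\ca\ca}(n,\cdot)$ are the Kloosterman-sum coefficients appearing in \eqref{eoa}. Factoring this into a product of two completed $L$-values requires both that $Y_{\ca\ca}(n,s)$ be a divisor function and that the $a_{f,\ca}(n)$ satisfy Hecke multiplicativity; both are special to $\G=\SL(2,\Z)$ and Hecke eigenforms. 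For a general $\G$ there are no Hecke operators and the series does not factor --- this is exactly why the paper notes at the start of \S\ref{slast} that the argument of \S\ref{conti} does not extend, and why the theorem for general groups is stated as an identity of cusp forms rather than as an inner-product formula like \eqref{lslw}. If your intermediate claims held, \eqref{lslw} would follow verbatim for every Fuchsian group, which is not available.

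The repair is the route the paper takes, and which you half-anticipate in your ``main obstacle'' paragraph: do not evaluate either inner product as a product of $L$-functions. Carry both unfoldings only to the stage where each equals an explicit constant times the same convolution series above. On the double Eisenstein side the double-coset sum $\sum_{\g\in B\backslash\G'/B,\ c_\g>0}e^{2\pi i n d_\g/c_\g}c_\g^{-2s}$ is recognized as $Y_{\ca\ca}(n,s)/(\zeta(2s)n^{s-1})$ after evaluating the $x$-integral by Laplace's formula; on the Rankin--Selberg side the same coefficients enter through the Fourier expansion of $E^*_{k_2,\ca}(\cdot,1-v)$. Matching the two resulting expressions term by term, with the constant $2^{\varepsilon_\G-1}$ recording the index of $B$ in $\sa^{-1}\G_\ca\sa$, gives \eqref{pgen} directly, with no factorization needed.
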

\begin{proof}
Let $g \in S_k(\G)$ and set $\G'=\sa^{-1}\G \sa$. Then
\begin{multline} \label{mul}
  \s{\ei_{s,k-s,\ca}(\cdot,w)}{g} = \int_{\G' \backslash \H} \Im(\sa z)^k \overline{g}(\sa z) \ei_{s,k-s,\ca}(\sa z,w) \, d\mu z \\
   = \int_{\G' \backslash \H} y^k \frac{\overline{g}(\sa z)}{\overline{j}(\sa,z)^k}
   \sum_{ \delta \in B \backslash \G'}
   j(\delta,z)^{-k}
   \left[\sum_{ \g  \in B \backslash \G' \atop  c_{\g  \delta^{-1}}  >0}
\left( c_{\g  \delta^{-1}} \right)^{w-1} \left( \frac{j(\g,z)}{j(\delta,z)} \right)^{-s}\right]
    \, d\mu z.
\end{multline}
Since $g(\sa z) j(\sa, z)^{-k} \in S_k(\G')$ we have
$$
y^k \frac{\overline{g}(\sa z)}{\overline{j}(\sa,z)^k j(\delta,z)^k} = \Im(\delta z)^k \frac{\overline{g}(\sa \delta z)}{\overline{j}(\sa, \delta z)^k}.
$$
Note also that $j(\g,z)/j(\delta,z) =j(\g \delta^{-1}, \delta z)$. Hence \eqref{mul} equals
\begin{equation}\label{mul2}
  2^{\varepsilon_\G}\int_{\G_\infty \backslash \H} y^k \frac{\overline{g}(\sa z)}{\overline{j}(\sa,z)^k}
   \left[\sum_{ \g  \in B \backslash \G' \atop  c_{\g}  >0}
 \left(c_{\g}\right)^{w-1} j(\g,z)^{-s}\right]
    \, d\mu z.
\end{equation}
Writing
$$
\sum_{ \g  \in B \backslash \G' \atop  c_{\g}  >0}
 \left(c_{\g}\right)^{w-1} j(\g,z)^{-s} = \sum_{ \g  \in B \backslash \G'/ B \atop  c_{\g}  >0}
 \left(c_{\g}\right)^{w-1} \sum_{m \in \Z} j(\g,z+m)^{-s}
$$
and using the Fourier expansion of $g$ at $\ca$:
$
j(\sa,z)^{-k} g(\sa z) = \sum_{n=1}^\infty a_{g,\ca}(n) e^{2\pi i n z},
$
we get that
\begin{eqnarray*}
  \eqref{mul2} &=& 2^{\varepsilon_\G}\sum_{n=1}^\infty \overline{a_{g,\ca}}(n) \sum_{ \g  \in B \backslash \G'/ B \atop  c_{\g}  >0}
 \frac 1{\left(c_{\g}\right)^{s+1-w}} \int_0^\infty \int_{-\infty}^\infty y^{k-2} \frac{e^{-2\pi i n x -2\pi ny}}{(x+d_\g/c_\g + iy)^s} \, dx dy  \\
   &=&  2^{\varepsilon_\G} I_k(s) \sum_{n=1}^\infty \frac{\overline{a_{g,\ca}}(n)}{n^{k-s}} \sum_{ \g  \in B \backslash \G'/ B \atop  c_{\g}  >0}
 \frac {e^{2\pi i n d_\g/c_\g}}{\left(c_{\g}\right)^{s+1-w}}
\end{eqnarray*}
for
$$
I_k(s):= \int_0^\infty \int_{-\infty}^\infty y^{k-2} \frac{e^{-2\pi i  x -2\pi y}}{(x + iy)^s} \, dx dy.
$$
The inner integral over $x$ may be evaluated with a formula of Laplace \cite[p. 246]{ww}:
$$
\int_{-\infty}^\infty  \frac{e^{-2\pi i  x}}{(x + iy)^s} \, dx = e^{-2\pi y}\frac{(2\pi)^s}{\G(s) e^{s i \pi/2}}
$$
so that
$$
I_k(s)=  \frac{\G(k-1)}{(4\pi)^{k-1}}\frac{(2\pi)^s}{\G(s) e^{s i \pi/2}}.
$$
With \eqref{eoa} and, for example \cite[Chap. 3]{Iwsp}, we recognize
$$
\sum_{ \g  \in B \backslash \G'/ B \atop  c_{\g}  >0}
 \frac {e^{2\pi i n d_\g/c_\g}}{\left(c_{\g}\right)^{2s}} = \sum_{ \g  \in \G_\infty \backslash \G'/ \G_\infty \atop  c_{\g}  >0}
 \frac {e^{2\pi i n d_\g/c_\g}}{\left(c_{\g}\right)^{2s}} = \frac{Y_{\ca \ca}(n,s)}{\zeta(2s) n^{s-1}}.
$$
It follows that we have shown
$$
\s{\ei^*_{s,k-s,\ca}(\cdot,w)}{g}= 2^{\varepsilon_\G-1} \frac{\zeta(2-2u)\G(k-s)\G(k-w)}{(2\pi)^{2k-s-w}}  \sum_{n=1}^\infty \frac{Y_{\ca \ca}(n,1-v) \overline{a_{g,\ca}}(n) }{n^{k-s-v}}.
$$
Reasoning as in the proof of \cite[(2.10)]{DO2010} we also find, for all even $k_1$, $k_2 \gqs 0$ with $k_1+k_2=k$,
\begin{multline*}
\s{(-1)^{k_2/2}y^{-k/2}E^*_{k_1,\ca}(\cdot, 1-u)E^*_{k_2,\cb}(\cdot, 1-v)/(2\pi^{k/2})}{g} \\
= \frac{\zeta(2-2u)\G(k-s)\G(k-w)}{(2\pi)^{2k-s-w}} \sum_{n=1}^\infty \frac{Y_{\cb \ca}(n,1-v) \overline{a_{g,\ca}}(n) }{n^{k-s-v}}.
\end{multline*}
Since $\ei^*_{s,k-s,\ca}(z,w) \in S_k(\G)$  and $g \in S_k(\G)$ is  arbitrary,  \eqref{pgen} follows.
\end{proof}

\begin{cor}
The double Eisenstein series $\ei^*_{s,k-s,\ca}(z,w)$ has a meromorphic continuation to all $s$, $w \in \C$ and as a function of $z$ is always in $S_k(\G)$. It satisfies the functional equation
\begin{equation}\label{fe9}
    \ei^*_{k-s,s,\ca}(z,w) = (-1)^{k/2}\ei^*_{s,k-s,\ca}(z,w).
\end{equation}
\end{cor}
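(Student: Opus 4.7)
The plan is to derive both claims from the theorem immediately preceding the corollary, which expresses
$$\ei^*_{s,k-s,\ca}(z,w) = 2^{\varepsilon_\G-1}\pi_{hol}\bigl[(-1)^{k_2/2}y^{-k/2}E^*_{k_1,\ca}(\cdot,1-u)E^*_{k_2,\ca}(\cdot,1-v)/(2\pi^{k/2})\bigr]$$
in the initial domain of convergence, for any splitting $k=k_1+k_2$ with $k_1,k_2\gqs 0$ even. The parameters $u=(s+w-k+1)/2$ and $v=(-s+w+1)/2$ are independent of this splitting, which is precisely the symmetry that will drive the functional equation.

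For the meromorphic continuation, the key input is that each completed Eisenstein series $E^*_{k_i,\ca}(z,\cdot)$ continues meromorphically to all of $\C$ via \eqref{sucu}, and hence so does the product appearing on the right as a function of $(s,w)\in\C^2$ transforming with weight $k$ in $z$. I would then apply $\pi_{hol}$ via the Fourier-coefficient formula from \cite[\S 3]{DO2010} exemplified in \S\ref{out}: at each cusp $\cb$, subtract suitable multiples of $y^{-k/2}E^*_{k,\cb}(z,\cdot)$ (which are annihilated by $\pi_{hol}$) to cancel the $y^s$ and $y^{1-s}$ terms predicted by \eqref{eoa}, \eqref{yuk}, leaving an $O(y^{-\epsilon})$ remainder. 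The Mellin-type integral expressing each Fourier coefficient of $\pi_{hol}$ of this remainder then converges absolutely and extends meromorphically in $(s,w)$. At every regular point the resulting $z$-function is by construction $\pi_{hol}$ of a weight $k$ form decaying at all cusps, hence lies in $S_k(\G)$.

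For the functional equation, I would apply the theorem twice. For $\ei^*_{s,k-s,\ca}(z,w)$ use the splitting $(k_1,k_2)$; for $\ei^*_{k-s,s,\ca}(z,w)$ substitute $s\mapsto k-s$, which maps $(u,v)\mapsto(v,u)$, and use the reversed splitting $(k_1',k_2')=(k_2,k_1)$. This yields
$$\ei^*_{k-s,s,\ca}(z,w) = 2^{\varepsilon_\G-1}\pi_{hol}\bigl[(-1)^{k_1/2}y^{-k/2}E^*_{k_2,\ca}(\cdot,1-v)E^*_{k_1,\ca}(\cdot,1-u)/(2\pi^{k/2})\bigr].$$
Because $k_1+k_2=k$ with both $k_i$ even, one has $(-1)^{k_1/2}=(-1)^{k/2}(-1)^{k_2/2}$, so the bracketed expression equals $(-1)^{k/2}$ times that of $\ei^*_{s,k-s,\ca}(z,w)$, giving \eqref{fe9}. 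The identity holds first in the common region of convergence and then propagates to all $(s,w)\in\C\times\C$ by the continuation established above.

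The main obstacle will be verifying uniformly in $(s,w)$ that the holomorphic projection survives the continuation: at points where one of the auxiliary Eisenstein series acquires a pole, or where its leading $y^s,\,y^{1-s}$ terms collide, the subtraction scheme above must be adjusted and the Mellin integrals for the Fourier coefficients reexamined case by case. This is entirely parallel to the explicit computation sketched in \S\ref{out}, hence technical rather than conceptual.
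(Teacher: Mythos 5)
Your proposal is correct and follows essentially the same route as the paper: the continuation is deduced from \eqref{pgen} together with Selberg's continuation of $E^*_{k,\ca}(z,s)$, and the functional equation \eqref{fe9} from the observation that $(s,w)\to(k-s,w)$ swaps $u$ and $v$, with the sign $(-1)^{k/2}$ extracted exactly as you do by exploiting the freedom in the splitting $k=k_1+k_2$. Your discussion of how $\pi_{hol}$ survives the continuation is more detailed than the paper's one-line assertion, but the argument is the same.
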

\begin{proof}
Since $E^*_{k,\ca}(z, s)$ has a well-known continuation to all $s \in \C$, due to Selberg, the continuation of $\ei^*_{s,k-s,\ca}(z,w)$ follows from \eqref{pgen}. The change of variables $(s,w) \to (k-s,w)$ corresponds to $(u,v) \to (v,u)$ and so \eqref{fe9} is also a consequence of \eqref{pgen}.
\end{proof}

If $\G$ has more than one cusp then $\ei^*_{s,k-s,\ca}(z,w)$ does not appear to possess a functional equation of the type \eqref{ll2} as $(s,w)\to (w,s)$. This corresponds on the right of \eqref{pgen} to $(u,v) \to (u,1-v)$ and the functional equation for $E^*_{k_2,\ca}(\cdot, 1-v)$ involves a sum over cusps as in \eqref{sucu}.

We remark that the functional equation \eqref{fe9} also follows directly from \eqref{dbleis4} if $-I \in \G$: replace $\g$ and $\delta$ in the sum by $-\delta$ and $\g$ respectively.

Finally, it would be interesting to find the  continuation in $s$, $s'$ of the non-holomorphic double Eisenstein series $\de^*_\ca(z;s,s')$ for general groups. We expect that a similar decomposition to \eqref{ei} should be true.

{\footnotesize \bibliography{kernelb2}

\begin{thebibliography}{10}

\bibitem{BLZ}
R.~Bruggeman, J.~Lewis, and D.~Zagier.
\newblock Period functions for {M}aass wave forms {II}: {C}ohomology.
\newblock preprint.

\bibitem{Bu}
Daniel Bump.
\newblock {\em Automorphic forms and representations}, volume~55 of {\em
  Cambridge Studies in Advanced Mathematics}.
\newblock Cambridge University Press, Cambridge, 1997.

\bibitem{Cohen}
Henri Cohen.
\newblock Sur certaines sommes de s\'eries li\'ees aux p\'eriodes de formes
  modulaires.
\newblock In {\em Seminaire de th\'eorie de nombres}. Grenoble, 1981.

\bibitem{cmz}
Paula~Beazley Cohen, Yuri Manin, and Don Zagier.
\newblock Automorphic pseudodifferential operators.
\newblock In {\em Algebraic aspects of integrable systems}, volume~26 of {\em
  Progr. Nonlinear Differential Equations Appl.}, pages 17--47. Birkh\"auser
  Boston, Boston, MA, 1997.

\bibitem{Den}
Christopher Deninger.
\newblock Higher order operations in {D}eligne cohomology.
\newblock {\em Invent. Math.}, 120(2):289--315, 1995.

\bibitem{dg}
Adrian Diaconu and Dorian Goldfeld.
\newblock Second moments of {${\rm GL}_2$} automorphic {$L$}-functions.
\newblock In {\em Analytic number theory}, volume~7 of {\em Clay Math. Proc.},
  pages 77--105. Amer. Math. Soc., Providence, RI, 2007.

\bibitem{DO2010}
Nikolaos Diamantis and Cormac O'Sullivan.
\newblock Kernels of {$L$}-functions of cusp forms.
\newblock {\em Math. Ann.}, 346(4):897--929, 2010.

\bibitem{GKZ}
Herbert Gangl, Masanobu Kaneko, and Don Zagier.
\newblock Double zeta values and modular forms.
\newblock In {\em Automorphic forms and zeta functions}, pages 71--106. World
  Sci. Publ., Hackensack, NJ, 2006.

\bibitem{G}
Dorian Goldfeld.
\newblock {\em Automorphic forms and {$L$}-functions for the group {${\rm
  GL}(n,\bold R)$}}, volume~99 of {\em Cambridge Studies in Advanced
  Mathematics}.
\newblock Cambridge University Press, Cambridge, 2006.
\newblock With an appendix by Kevin A. Broughan.

\bibitem{Iwsp}
Henryk Iwaniec.
\newblock {\em Spectral methods of automorphic forms}, volume~53 of {\em
  Graduate Studies in Mathematics}.
\newblock American Mathematical Society, Providence, RI, second edition, 2002.

\bibitem{JO1}
Jay Jorgenson and Cormac O'Sullivan.
\newblock Convolution {D}irichlet series and a {K}ronecker limit formula for
  second-order {E}isenstein series.
\newblock {\em Nagoya Math. J.}, 179:47--102, 2005.

\bibitem{JO2}
Jay Jorgenson and Cormac O'Sullivan.
\newblock Unipotent vector bundles and higher-order non-holomorphic
  {E}isenstein series.
\newblock {\em J. Th\'eor. Nombres Bordeaux}, 20(1):131--163, 2008.

\bibitem{KP}
Jerzy Kaczorowski and Alberto Perelli.
\newblock On the structure of the {S}elberg class. {I}. {$0\leq d\leq 1$}.
\newblock {\em Acta Math.}, 182(2):207--241, 1999.

\bibitem{Ko}
N.~I. Koblic.
\newblock Non-integrality of the periods of cusp forms outside the critical
  strip.
\newblock {\em Funkcional. Anal. i Prilo\v zen.}, 9(3):52--55, 1975.

\bibitem{KZ}
W.~Kohnen and D.~Zagier.
\newblock Modular forms with rational periods.
\newblock In {\em Modular forms ({D}urham, 1983)}, Ellis Horwood Ser. Math.
  Appl.: Statist. Oper. Res., pages 197--249. Horwood, Chichester, 1984.

\bibitem{KonZ}
Maxim Kontsevich and Don Zagier.
\newblock Periods.
\newblock In {\em Mathematics unlimited---2001 and beyond}, pages 771--808.
  Springer, Berlin, 2001.

\bibitem{KMV}
E.~Kowalski, P.~Michel, and J.~VanderKam.
\newblock Rankin-{S}elberg {$L$}-functions in the level aspect.
\newblock {\em Duke Math. J.}, 114(1):123--191, 2002.

\bibitem{La}
Serge Lang.
\newblock {\em Introduction to modular forms}, volume 222 of {\em Grundlehren
  der Mathematischen Wissenschaften [Fundamental Principles of Mathematical
  Sciences]}.
\newblock Springer-Verlag, Berlin, 1995.
\newblock With appendixes by D. Zagier and Walter Feit, Corrected reprint of
  the 1976 original.

\bibitem{LZ}
J.~Lewis and D.~Zagier.
\newblock Period functions for {M}aass wave forms. {I}.
\newblock {\em Ann. of Math. (2)}, 153(1):191--258, 2001.

\bibitem{Ma1}
Ju.~I. Manin.
\newblock Periods of cusp forms, and {$p$}-adic {H}ecke series.
\newblock {\em Mat. Sb. (N.S.)}, 21(134):371--393, 1973.

\bibitem{Ma2}
Yu.~I. Manin.
\newblock Remarks on modular symbols for {M}aass wave forms.
\newblock arxiv:0803.3270v2.

\bibitem{Mue}
T.~M{\"u}hlenbruch.
\newblock Hecke operators on period functions for {$\Gamma_0(n)$}.
\newblock {\em J. Number Theory}, 118(2):208--235, 2006.

\bibitem{Oeis}
Cormac O'Sullivan.
\newblock Formulas for {E}isenstein series.
\newblock Preprint.

\bibitem{R}
R.~A. Rankin.
\newblock The scalar product of modular forms.
\newblock {\em Proc. London Math. Soc. (3)}, 2:198--217, 1952.

\bibitem{Sh}
Goro Shimura.
\newblock The special values of the zeta functions associated with cusp forms.
\newblock {\em Comm. Pure Appl. Math.}, 29(6):783--804, 1976.

\bibitem{S}
Goro Shimura.
\newblock {\em Introduction to the arithmetic theory of automorphic functions},
  volume~11 of {\em Publications of the Mathematical Society of Japan}.
\newblock Princeton University Press, Princeton, NJ, 1994.
\newblock Reprint of the 1971 original, Kan{\^o} Memorial Lectures, 1.

\bibitem{ww}
E.~T. Whittaker and G.~N. Watson.
\newblock {\em A course of modern analysis. {A}n introduction to the general
  theory of infinite processes and of analytic functions: with an account of
  the principal transcendental functions}.
\newblock Fourth edition. Reprinted. Cambridge University Press, New York,
  1962.

\bibitem{Za3}
D.~Zagier.
\newblock Modular forms whose {F}ourier coefficients involve zeta-functions of
  quadratic fields.
\newblock In {\em Modular functions of one variable, {VI} ({P}roc. {S}econd
  {I}nternat. {C}onf., {U}niv. {B}onn, {B}onn, 1976)}, pages 105--169. Lecture
  Notes in Math., Vol. 627. Springer, Berlin, 1977.

\end{thebibliography}
\vskip 3mm
\noindent
{\sc School of Mathematical Sciences, Univ. of Nottingham,  University Park, Nottingham NG7 2RD, U.K.}
\newline
{\it E-mail address: }{\tt nikolaos.diamantis@maths.nottingham.ac.uk}
\newline
\vskip 0mm
\noindent
{\sc Dept. of Mathematics, The CUNY Graduate Center,  365 Fifth Ave., New York, NY 10016-4309, U.S.A.}
\newline
{\it E-mail address: }{\tt cosullivan@gc.cuny.edu}
}

\end{document}